\newlength{\defbaselineskip} \setlength{\defbaselineskip}{\baselineskip}
\newtheorem{thm}{Theorem}[section]
\newtheorem{cor}[thm]{Corollary}
\newtheorem{lemm}[thm]{Lemma}
\newtheorem{prop}[thm]{Proposition}
\newtheorem{conj}[thm]{Conjecture}
\newtheorem{prob}[thm]{Problem}
\newtheorem{claim}[thm]{Claim}
\theoremstyle{definition}
\newtheorem{defi}[thm]{Definition}
\newtheorem{rem}[thm]{Remark}
\numberwithin{equation}{section} 
\DeclareMathOperator{\Pic}{Pic}
\DeclareMathOperator{\Hom}{Hom}
\DeclareMathOperator{\Ext}{Ext}
\DeclareMathOperator{\Sym}{Sym}
\DeclareMathOperator{\sing}{sing}
\DeclareMathOperator{\supp}{supp}
\DeclareMathOperator{\sym}{Sym}
          \newcommand\PP{{\mathbb{P}}}
          \newcommand\C{{\mathcal C}}
           \newcommand\F{{\mathcal F}}
           \newcommand\G{{\mathcal G}}
          \newcommand\oo{\mathcal O}
         \newcommand\HOM{\mathcal{H}om}
          \newcommand\ext{\mathcal{E}xt}
\newcounter{appendice}
\begin{document}

\title{On IHS fourfolds with $b_2=23$}
\author{Grzegorz Kapustka}
\dedicatory{ (with an appendix written jointly with \textsc{Micha\l \ Kapustka})}
\thanks{
Mathematics Subject Classification(2000): 14J70, 14J35. The project was supported by MNSiW, N N201 414539. The Appendix is supported by
the program Iuventus plus ``Rangi i rangi brzegowe wielomian\'ow
oraz r\'ownania rozmaito\'sci siecznych".}
 
\begin{abstract}  The present work is concerned with the study of
four-dimensional irreducible holomorphic symplectic manifolds with  
second
Betti number $23$. We describe their birational geometry and their relations to EPW sextics.
\end{abstract}

\maketitle

\section{introduction}
By an \emph{irreducible holomorphic symplectic (IHS) fourfold} we mean (see \cite{B}) a
four-dimen\-sional simply connected K\"{a}hler manifold with trivial canonical
bundle that admits a unique (up to 
a constant)
closed 
non-degenerate holomorphic $2$-form and is not a product of two
manifolds. These manifolds are among the building blocks of K\"{a}hler fourfolds with
trivial first Chern class \cite[Thm.~2]{B}.
In the case of four-dimensional examples their second Betti number 
$b_2$  is
bounded and $3\leq b_2\leq 8$ or $b_2=23$ (see \cite{G}).
There are however only two known families of IHS's in this dimension, one with $b_2=7$
and the other with  $b_2=23$ \cite{B}.
The first is the deformation of the Hilbert scheme of two points on a K3 surface
and the second is the deformation of the Hilbert scheme of three points
that sum to $0$ on an abelian surface.

In this paper we address the problem of classification of IHS fourfolds $X$ with
$b_2=23$. This program was initiated by O'Grady whose purpose is to prove that
IHS fourfolds that are numerically equivalent to the Hilbert scheme of two points on a $K3$ 
surface 
are deformation equivalent to this Hilbert scheme (are of Type $K3^{[2]}$).

It is known 
from \cite{V} and \cite{G} that for IHS fourfolds with $b_2=23$ the cup product induces
an isomorphism
\begin{equation} \label{equa1}\sym^2 H^2(X,\mathbb{Q} )\simeq
H^4(X,\mathbb{Q})
\end{equation} 
and that
$H^3(X,\mathbb{Q})=0$.
By \cite{F} the Hodge diamond admits additional symmetries, and by \cite{S} it has
the following shape:
 $$1$$
    $$0 \quad\quad 0$$
    $$1 \qquad 21\qquad 1$$
    $$0 \qquad 0\qquad 0\qquad0$$
    $$1 \qquad 21\qquad 232\qquad 21\qquad 1$$
    $$0 \qquad 0\qquad 0\qquad0$$
    $$1 \qquad 21\qquad 1$$
     $$1$$

Recall that for an IHS fourfold $X$ we can find a (Fujiki) constant 
$c$ such that
for $\alpha\in H^2(X,\mathbb{Z})$, we have $cq(\alpha)^2=\int \alpha^{4}$ where
$q$ is a primitive integral quadric form
called the Beauville--Bogomolov form defining a lattice structure on
$H^2(X,\mathbb{Z})$ called the Beauville--Bogomolov (for short B-B) lattice.

 In order to classify IHS fourfolds with $b_2=23$ we have to find the possible
lattices and the possible Fujiki invariants for the given lattice. Next for
a fixed Fujiki invariant and B-B lattice find
all deformation families of IHS manifolds with the given numerical data.
Note that the lattices for the known examples are even but not unimodular.

The plan of the paper is the following. We show that each ample divisor on an IHS
fourfold $X$ with $b_2(X)=23$ has self-intersection 
which is an integer 
 of
the form $12k^2$ for some $k\in\mathbb{N}$.
Next we study the case when $X$ admits a divisor $H$ with $H^4=12$, i.e.~the
minimal possible self-intersection.
In this case 
 $h^0(\mathcal{O}_X(H))=6$  the first possibility to consider is when
$H$ defines a birational morphism $\varphi_{|H|}\colon X\to \PP^5$ into a hypersurface of degree $12$.
Recall that the ideal of the conductor of $\varphi_{|H|}$ then  defines a
scheme structure $C$ on the singular locus of the image $\varphi_{|H|}(X)\subset \PP^5$. It is known
that $C\subset \mathbb{P}^5$ is Cohen--Macaulay of pure dimension~$3$.

Recall that an \textit{EPW sextic} $S_A\subset\PP^5=:\PP(W)$ is a special sextic hypersurface defined as the determinant
of the morphism
\begin{equation}\label{eqEPW} A \otimes \mathcal{O}_{\mathbb{P}^5} \rightarrow
\Omega^2_{\mathbb{P}^5}(3)\subset \PP(W) \times \textstyle\bigwedge^3 W
\end{equation}
corresponding to the choice of a 10-dimensional Lagrangian $A\subset \bigwedge^3 W$ with respect to the natural symmetric form  (as in
\cite[Ex.~9.3]{EPW}).
Furthermore, following O'Grady we denote 
\begin{equation}\label{2.2}
\Theta_A=\{V\in G(3,W)\mid V\in G(3,W)\cap
\mathbb{P}(A)\subset \mathbb{P}(\textstyle\bigwedge^3 W) \} .
\end{equation}
The set $\Theta_A$ is empty for a generic choice of $A$ and generally measures how
singular the EPW sextic is. Recall that EPW sextics 
were also constructed by O'Grady \cite{O1} as  quotients by an involution of an IHS fourfold
deformation equivalent to $Hilb^2(S)$ where $S$ is a $K3$ surface that admits a polarization of degree $12$.
Our main result is the following:

\begin{thm}\label{EPW} 
Suppose that an IHS fourfold $X$ with $b_2=23$ admits an
ample divisor with $H^4=12$ such that $H$ defines a birational morphism
$\varphi_{|H|}$. Then there is a unique sextic containing the singular scheme $C\subset \PP^5$ of $\varphi_{|H|}(X)\subset \PP^5$ defined above. Moreover, this sextic
is an EPW sextic that we denote by $S_A$ (we call
it the EPW hypersurface  adjoint to the image $\varphi_{|H|}(X)\subset \PP^5$).
\end{thm}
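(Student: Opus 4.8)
The plan is to separate the statement into the uniqueness of the sextic through $C$ and its identification as an EPW hypersurface, and to dispose of the uniqueness first by a duality computation. Write $\nu=\varphi_{|H|}\colon X\to Y:=\varphi_{|H|}(X)\subset\PP^5$; since $X$ is smooth and $H$ is ample, $\nu$ is finite, hence realizes $X$ as the normalization of the degree-$12$ hypersurface $Y$. By adjunction $\omega_Y=\mathcal{O}_Y(6)$, and the conductor ideal $\mathfrak{c}=\mathcal{H}om_{\mathcal{O}_Y}(\nu_*\mathcal{O}_X,\mathcal{O}_Y)$ is by construction $\mathcal{I}_{C/Y}$. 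Grothendieck duality for the finite map $\nu$ gives $\nu_*\omega_X=\mathcal{H}om_{\mathcal{O}_Y}(\nu_*\mathcal{O}_X,\omega_Y)=\mathfrak{c}\otimes\omega_Y=\mathcal{I}_{C/Y}(6)$, and since $X$ is IHS we have $\omega_X=\mathcal{O}_X$, so $\nu_*\mathcal{O}_X\cong\mathcal{I}_{C/Y}(6)$. I would then feed this into the twist by $\mathcal{O}(6)$ of $0\to\mathcal{I}_{Y/\PP^5}\to\mathcal{I}_{C/\PP^5}\to\mathcal{I}_{C/Y}\to 0$, namely $0\to\mathcal{O}_{\PP^5}(-6)\to\mathcal{I}_{C/\PP^5}(6)\to\mathcal{I}_{C/Y}(6)\to 0$. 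Because $H^0(\mathcal{O}_{\PP^5}(-6))=H^1(\mathcal{O}_{\PP^5}(-6))=0$, taking cohomology yields $H^0(\mathcal{I}_{C/\PP^5}(6))\cong H^0(Y,\nu_*\mathcal{O}_X)=H^0(X,\mathcal{O}_X)=\mathbb{C}$. Thus $C$ lies on exactly one sextic $S$, which settles the first assertion.

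For the second assertion I would produce the Lagrangian $A\subset\bigwedge^3 W$ directly from the symplectic geometry of $X$, following O'Grady. Setting $W^*=H^0(\mathcal{O}_X(H))$, recall the standard identification $H^0(\PP^5,\Omega^2_{\PP^5}(3))=\bigwedge^3 W^*$, under which a decomposable $s_1\wedge s_2\wedge s_3$ corresponds to the twisted $2$-form $\omega_{s_1s_2s_3}=s_1\,ds_2\wedge ds_3+\text{(cyclic)}$. Pulling these forms back along $\varphi_{|H|}$, composing $\varphi^*\Omega^2_{\PP^5}\to\Omega^2_X$ and using $\varphi^*\mathcal{O}(3)=\mathcal{O}_X(3H)$, gives a linear map $P\colon\bigwedge^3 W^*\to H^0(X,\Omega^2_X(3H))$. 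Wedging pulled-back forms and using $\Omega^4_X=\omega_X=\mathcal{O}_X$ produces a symmetric pairing $\bigwedge^3 W^*\otimes\bigwedge^3 W^*\to H^0(\mathcal{O}_X(6H))$, $(a,b)\mapsto\varphi^*(\omega_a\wedge\omega_b)$, which vanishes as soon as $a$ or $b$ lies in $\ker P$. I would define $A$ to be $\ker P$, transported to $\bigwedge^3 W$ through the volume isomorphism $\bigwedge^3 W^*\cong\bigwedge^3 W$ coming from a trivialization of $\bigwedge^6 W$, and identify the map $A\otimes\mathcal{O}_{\PP^5}\to\Omega^2_{\PP^5}(3)$ of \eqref{eqEPW} with the evaluation of these forms.

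It then remains to show that $A$ is Lagrangian and that the resulting EPW sextic $S_A$ contains $C$. Isotropy I would extract by relating $\varphi^*(\omega_a\wedge\omega_b)$ to the wedge pairing $a\wedge b\in\bigwedge^6 W^*\cong\mathbb{C}$: vanishing of the pulled-back $4$-form on the $4$-fold $X$ should force $a\wedge b=0$ for $a,b\in A$. That $\dim A=10$, so that the isotropic $A$ is actually Lagrangian, would follow from computing $h^0(\Omega^2_X(3H))$ and the rank of $P$, and this is where the numerical input ($H^4=12$, the prescribed Hodge numbers, and the isomorphism \eqref{equa1}) genuinely enters. Finally, to see $C\subset S_A$ I would argue that over a general point $[v]$ of the non-normal locus $C$ the two preimages $x_1\neq x_2\in X$ of $[v]$ impose a condition making the evaluation map \eqref{eqEPW} drop rank at $[v]$, equivalently producing a nonzero element of $A\cap(v\wedge\textstyle\bigwedge^2 W)$; hence $C\subset S_A$. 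Since $S_A$ is then a sextic through $C$, the uniqueness proved above forces $S_A=S$, and $S$ is EPW.

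The main obstacle I expect is the Lagrangian property of $A$: both the isotropy — tracking the form-theoretic wedge down to the one-dimensional $\bigwedge^6 W^*$ through the pullback to the $4$-fold $X$ — and especially the exact count $\dim A=10$ are delicate, and are precisely the steps that use that $X$ is an IHS fourfold with $b_2=23$ rather than an arbitrary fourfold in $\PP^5$. The verification that $C$ lies in the degeneracy locus, although geometrically transparent from the two branches of $Y$ along $C$, also requires care to make rigorous at the level of the map \eqref{eqEPW} and its cokernel sheaf; once $C\subset S_A$ is in hand, however, the conclusion is immediate from the uniqueness of the sextic.
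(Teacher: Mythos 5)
Your first paragraph is essentially correct and matches the mechanism the paper itself uses: relative duality for the finite map $\varphi_{|H|}$ together with $\omega_X\cong\oo_X$ gives $\varphi_*\oo_X\cong\mathcal{I}_{C/X'}(6)$ (the paper writes this as $\mathcal{C}\cong\varphi_*(\oo_X(-6))$), and feeding this into $0\to\oo_{\PP^5}(-6)\to\mathcal{I}_{C/\PP^5}(6)\to\mathcal{I}_{C/X'}(6)\to 0$ does give $h^0(\mathcal{I}_{C/\PP^5}(6))=h^0(\oo_X)=1$. So the uniqueness assertion is fine.

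The main assertion is not: your candidate Lagrangian $A=\ker P$ cannot be the right space, and the steps you flag as ``delicate'' are where the approach actually breaks rather than merely needing care. First, the pairing $(a,b)\mapsto\varphi^*(\omega_a\wedge\omega_b)$ is \emph{symmetric} ($2$-forms commute under wedge), whereas the wedge form $\bigwedge^3W^*\otimes\bigwedge^3W^*\to\bigwedge^6W^*$ is \emph{skew}-symmetric --- that is precisely why it is a symplectic form; $\bigwedge^6W^*$ sits inside $\bigwedge^2(\bigwedge^3W^*)$, not inside $\Sym^2(\bigwedge^3W^*)$, so no equivariant procedure extracts $a\wedge b$ from $\omega_a\wedge\omega_b$, and isotropy of $\ker P$ does not follow from the vanishing of your pairing. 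Second, and more fatally, your own mechanism for $C\subset S_A$ over-delivers: if $a\in\ker P$ then $\omega_a([v])$ annihilates the image of $d\varphi$ at \emph{every} point of $X$, in particular the $4$-dimensional tangent space $T_{[v]}X'$ at every smooth point $[v]$ of $X'$. The alternating $2$-forms on a $5$-dimensional space vanishing on a fixed hyperplane form a $4$-dimensional space, so the evaluation $\ker P\to\Omega^2_{\PP^5}(3)_{[v]}$ has rank $\leq 4$ along all of $X'$. Were $\dim\ker P=10$, this would force $\dim(\ker P\cap F_{[v]})\geq 6$ along the irreducible degree-$12$ hypersurface $X'$, so the sextic $\det f$ would vanish on $X'$, hence identically, and there would be no EPW sextic at all. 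So either $\dim\ker P<10$ or the construction degenerates; in either case $\ker P$ is not a Lagrangian defining $S_A$. The paper's $A$ is a genuinely different space: $A^{\vee}$ is the cokernel of $\Sym^3W^*\to H^0(\oo_X(3))$, produced by the Beilinson monad of $\varphi_*(\oo_X(2))$, which yields the resolution (\ref{Eq X}); the Lagrangian property and the symmetry of $f$ in (\ref{part-f}) come not from differential forms but from the self-duality $\varphi_*(\oo_X(3))\cong\ext^1_{\oo_{\PP^5}}(\varphi_*(\oo_X(3)),\oo_{\PP^5})$ (again Grothendieck duality plus triviality of $\omega_X$) combined with the Catanese--Schreyer and Eisenbud--Popescu--Walter formalism for symmetric resolutions; and $C\subset S_A$ is then automatic because the resolution (\ref{Eq C}) of $\mathcal{I}_{C|\PP^5}$ is obtained by restricting the same matrix $F$. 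Any salvage of a direct construction must therefore work with $H^0(\oo_X(3))$ and the cubics of $\PP^5$, not with pulled-back twisted $2$-forms.
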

When $H$ is fixed we denote  $\varphi:=\varphi_{|H|}$ and  $X'=\varphi(X)\subset \PP^5$.
Our approach to the study of the embedding $C\subset \PP^5$ is to use the methods of homological algebra
described in \cite{EFS}, \cite{EPW}.
In Section \ref{section-gen} we show that the unique adjoint EPW sextic $S_A$ obtained in Theorem \ref{EPW} has to be special.

\begin{prop}\label{general case} 
Suppose that an IHS fourfold $X$ with $b_2=23$ admits an
ample divisor with $H^4=12$ such that $H$ defines a birational morphism
$\varphi_{|H|}$. Then the sextic $S_{A}$ adjoint to the image $X'\subset \PP^5$ is an EPW sextic that is not generic.
More precisely, if  we denote by $\Theta_{A}$ the set defined by (\ref{2.2}), then 
 $\Theta_{A}\neq \emptyset$.
 \end{prop}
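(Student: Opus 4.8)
The plan is to argue by contradiction: assume $\Theta_A=\emptyset$ and extract a contradiction from the very special geometry of the conductor $C$. First I record two structural facts about $C$ that drive everything. Since $\varphi\colon X\to X'$ is the birational normalization of the degree-$12$ hypersurface $X'\subset\PP^5$, non-normal exactly along the divisor $C\subset X'$, the conductor--adjunction formula $\omega_X=\varphi^{*}\omega_{X'}\otimes\mathcal{O}_X(-\widetilde C)$, together with $\omega_X=\mathcal{O}_X$ and $\omega_{X'}=\mathcal{O}_{X'}(6)$, gives $\widetilde C\in|6H|$, where $\widetilde C=\varphi^{-1}(C)$. As $S_A$ has degree $6$ and contains $C$, comparing classes in $|6H|$ forces $\varphi^{*}S_A=\widetilde C$, and pushing forward along the $2{:}1$ map $\widetilde C\to C$ yields the cycle identity $X'\cap S_A=2\,C$ in $\PP^5$; in particular $\deg C=36$. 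Thus the EPW sextic $S_A$ of Theorem \ref{EPW} is everywhere tangent to $X'$ along its double locus $C$.

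Next I reformulate the conclusion. By definition $\Theta_A\neq\emptyset$ means that $A$ contains a decomposable trivector $\textstyle\bigwedge^3V$ for some $V\in G(3,W)$, and since $\textstyle\bigwedge^3V\in F_{[w]}:=w\wedge\bigwedge^2W$ precisely when $[w]\in\PP(V)$, each such $V$ produces a plane $\PP(V)\subset S_A$ along which the EPW map \eqref{eqEPW} has corank at least one. The assumption $\Theta_A=\emptyset$ is therefore exactly the statement that $S_A$ contains no such ``$\Theta$-plane''. Under this assumption I may invoke O'Grady's structure theory for EPW sextics with $\Theta_A=\emptyset$ (see \cite{O1}): the corank of \eqref{eqEPW} is at most $2$ at every point, the singular locus $\mathrm{Sing}(S_A)$ is a smooth surface (of degree $40$), and the double cover of $S_A$ branched along it is a smooth IHS fourfold of $K3^{[2]}$-type.

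The core of the argument is homological, following \cite{EFS} and \cite{EPW}. I would compute the Beilinson monad of $\mathcal{O}_C$ on $\PP^5$ from the cohomology table of $C$, which is pinned down by the conductor sequence $0\to\mathcal{O}_{X'}\to\varphi_{*}\mathcal{O}_X\to\mathcal{Q}\to0$ with $\mathcal{Q}$ supported on $C$, by the relation $\widetilde C\in|6H|$, and by the Hodge numbers of $X$ recorded in the introduction. This resolution realizes $C$ as a degeneracy scheme of the Lagrangian map \eqref{eqEPW}, so that the minimal free resolution is governed by the symmetric matrix defining $S_A$, and the non-reducedness $X'\cap S_A=2\,C$ is the algebraic shadow of the vanishing of $\det$ to second order along $C$. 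I would then read the stratum $\Theta_A$ directly off the resolution: a decomposable element of $A$ appears as an extra syzygy of $\mathcal{O}_C$ forced by its graded Betti numbers, equivalently the composite $A\otimes\mathcal{O}_{\PP^5}\to\Omega^2_{\PP^5}(3)$ is compelled to acquire a decomposable kernel vector.

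The contradiction then closes the proof: the existence of such a decomposable vector is, by definition, $\Theta_A\neq\emptyset$, which contradicts the standing assumption, while it is simultaneously incompatible with the corank bound and smoothness furnished by O'Grady's theory under $\Theta_A=\emptyset$. Hence $\Theta_A\neq\emptyset$. The main obstacle is precisely the middle step: translating the second-order tangency $X'\cap S_A=2\,C$ and the non-normal conductor structure of $X'$ into the algebraic statement that the Lagrangian map of \eqref{eqEPW} degenerates beyond the generic locus. This requires computing the graded Betti numbers of $\mathcal{O}_C$ accurately enough to isolate the extra syzygy and to verify that it genuinely corresponds to a point of $\Theta_A$, rather than merely to the generic degeneracy surface $\mathrm{Sing}(S_A)$; I expect the tangency relation $X'\cap S_A=2\,C$ to be exactly what feeds this degeneration.
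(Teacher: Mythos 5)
Your framework (contradiction from $\Theta_A=\emptyset$) and your preliminary observations are essentially sound: the conductor pulls back to a divisor $\widetilde C\in|6H|$, so $\varphi^{*}S_A=\widetilde C$ and $X'\cap S_A=2C$ as cycles, and under $\Theta_A=\emptyset$ one has O'Grady's description of $\sing S_A$. The proof fails, however, at exactly the point you yourself flag as ``the main obstacle'': you propose that $\Theta_A\neq\emptyset$ should appear as ``an extra syzygy of $\mathcal{O}_C$ forced by its graded Betti numbers,'' but no such mechanism exists and none is supplied. The resolution (\ref{Eq C}) of $\mathcal{I}_{C|\PP^5}$, namely $0\to 10\,\mathcal{O}_{\PP^5}(-9)\to\Omega^2_{\PP^5}(-6)\oplus\mathcal{O}_{\PP^5}(-6)\to\mathcal{I}_{C|\PP^5}\to0$, is derived in the proof of Theorem \ref{EPW} for \emph{every} adjoint sextic and has the same shape whether or not $\PP(A)$ meets $G(3,W)$: the condition $\Theta_A\neq\emptyset$ concerns the position of the Lagrangian $A$ relative to the Grassmannian inside $\PP(\bigwedge^3W)$, not the Betti table of $C$. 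A decomposable vector of $A$ is not a syzygy of $\mathcal{O}_C$; it is a point of $\PP(A)\cap G(3,W)$, which produces a plane $\PP(V)\subset S_A$ along which the corank of $f$ jumps. So the contradiction announced in your last paragraph is never actually produced; the only nontrivial step of the proof is deferred rather than carried out.

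What is genuinely needed --- and what the paper supplies --- is a geometric input that is \emph{special} to the case $\Theta_A=\emptyset$. The paper uses that, precisely when $\Theta_A=\emptyset$, the fourfold $V=\PP(A)\cap O_2$ is a smooth desingularization of $S_A$ with $\Pic(V)$ of rank $2$ generated by $H$ and $H_2$ (Proposition \ref{smoothV}, Proposition \ref{Pic(W_2)}), together with the relations $H+H_2=2T$ and $H_2=5H-E$ of Lemma \ref{W_2} coming from the second fibration $\pi_2$ and the duality $S_A\leftrightarrow S_A'$. The conductor scheme $C$ lifts to a divisor $D\in|3H+T|$ on $V$ (Proposition \ref{ciC}); $D-E$ is effective by Lemma \ref{N3 5.1}; and intersecting with a line $l$ contracted by $\pi_2$ gives $l\cdot(D-E)=-3<0$, forcing $E_2\subset D-E$ and hence the effectivity of the class $-3H_2-T$, which is absurd. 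None of these ingredients (the desingularization $V$, the dual projection $\pi_2$, the rank-two intersection theory on $V$) appears in your proposal, and without some substitute for them the hypothesis $\Theta_A=\emptyset$ is never actually played off against the existence of the IHS fourfold $X$.
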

 The proposition above suggests in fact that the morphism $\varphi_{|H|}$ is
never birational.
Indeed,  Proposition \ref{general case} implies that for a fixed sextic $S_A$
which is adjoint to the image of an IHS manifold, there is an at least one-dimensional family of polarized IHS fourfolds $X$ such that
$S_A$ is the adjoint hypersurface to $\varphi_{|H|} (X)\subset \PP^5$.

 The idea of the proof of the proposition is the following:
Suppose that $S_A$ with $\Theta_A=\emptyset$ is the adjoint hypersurface to $X'\subset \PP^5$. Then we show that $S_A$ is normal and we construct a natural desingularization
$\pi\colon V\to S_A$ described in Section \ref{12}.
We obtain a contradiction by considering the pull-back $\pi^{\ast}(X'\cap S_A)$
on $V$ using the knowledge of the Picard group of $V$ and the natural duality of
%
$V$. In the Appendix we present technical results used in the proofs concerning the geometry of
the orbits of the natural
$PGL(6)$ action on $\PP(\bigwedge^3 \mathbb{C}^6)$.

This work is motivated by the study of the following question of 
 Beauville  \cite[q.~ 4]{B0}:

\begin{prob}
Is each IHS fourfold with $b_2=23$ deformation equivalent 
 to $Hilb^2(S)$ where $S$ is a $K3$ surface?
\end{prob}

More precisely, we are motivated by the special case of the above question, called the O'Grady conjecture \cite{O}: Show that if an IHS fourfold $X$ is numerically
equivalent to $S^{[2]}$ where $S$ is a $K3$ surface (i.e.~the Fujiki invariant $c$  is 3 and $(H^2(X,\mathbb{Z}),q)$ is
isometric to $U^3\oplus E_8^2\oplus \langle -2\rangle$ with the standard
notation) then it is deformation equivalent to it.

If an IHS fourfold $X$ 
satisfies the assumptions of the above O'Grady conjecture then we have $b_2(X)=23$ and it is proven in \cite{O} that
$X$ is either of type $K3^{[2]}$ or is deformation equivalent to a polarized manifold
$(X_0,H_0)$ (satisfying the conditions of \cite[Claim 4.4]{O6}) such that 
$\varphi_{|H_0|}$ is a birational map 
whose image is a hypersurface of degree $6\leq d\leq 12$. 
 O'Grady conjectured
that the latter case cannot happen. In \cite{K} we showed that $d\geq 9$ and
that $|H_0|$ has at most three isolated base
points.
The case 
 where $\varphi_{|H_0|}$ is a birational morphism is 
 where the method of \cite{K}
cannot work;
 see also
\cite[Claim 4.9]{O6}.
Applications of our results to the O'Grady conjecture in 
 this case 
when $\varphi_{|H_0|}$ is a birational morphism 
will be discussed in Section \ref{sec-Ogrady}.

\section*{Acknowledgements} We would like to thank K.~O'Grady, L.~Gruson, and
F.O.~Schreyer
for useful discussions. We would also like to thank
 J.~Buczy\'nski, S.~Cynk, I.~Dolgachev, A.~Langer, Ch.~Okonek, and
P.~Pragacz for helpful comments. We thank the referee for useful remarks.
 
\section{Preliminaries} 

It was shown in \cite{Hu} that there 
 are a finite number of
deformation types of hyperk\"{a}hler manifolds with fixed form
$H^2(X,Z)\ni\alpha \mapsto \int \alpha^2 c_2\in \mathbb{Z}$.
In a similar way we obtain the following:

\begin{prop} Let $X$ be an IHS fourfold with $b_2=23$. The Fujiki constant of $X$ is an 
 integer  of
the form $3 n^2$ for some $n\in \mathbb{N}$. In particular the minimal degree of the
self-intersection $H^4$ of an ample divisor $H\subset X$ is $12$ and in this case 
$h^0(\mathcal{O}_X(H))=6$.
\end{prop}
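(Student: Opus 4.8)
The plan is to run Hirzebruch--Riemann--Roch against the constraints coming from the Hodge diamond and from the isomorphism (\ref{equa1}). Since $c_1(X)=0$, the Todd class has no odd part, $\mathrm{td}(X)=1+\tfrac1{12}c_2+\tfrac1{720}(3c_2^2-c_4)$, so for $L\in H^2(X,\mathbb Z)$ the degree-four part of $\int_X e^{c_1(L)}\mathrm{td}(X)$ gives
$\chi(\oo_X(L))=\tfrac1{24}\int_X c_1(L)^4+\tfrac1{24}\int_X c_1(L)^2c_2+\chi(\oo_X)$.
Reading the diamond yields $\chi(\oo_X)=1-0+1-0+1=3$, while the Fujiki relation $\int_X\alpha^4=c\,q(\alpha)^2$ together with the fact that $\alpha\mapsto\int_X\alpha^2c_2$ is a monodromy-invariant quadratic form, hence a multiple $c'\,q(\alpha)$ of $q$, produces $\chi(\oo_X(L))=\tfrac{c}{24}q(L)^2+\tfrac{c'}{24}q(L)+3$. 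Evaluating the same HRR expression at $L=0$, namely $\chi(\oo_X)=\tfrac1{720}\int_X(3c_2^2-c_4)$ with $\int_X c_4=\chi_{\mathrm{top}}=324$ (again off the diamond), pins down $\int_X c_2^2=828$ as an absolute constant for every such $X$.

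The heart of the matter is to convert $c'$ into an explicit function of $c$. Here I would use (\ref{equa1}): cup product makes $H^4(X,\mathbb Q)$ a self-dual space identified with $\sym^2H^2(X,\mathbb Q)$, and under this identification the invariant functional $\xi\mapsto\int_X c_2\cdot\mathrm{cup}(\xi)$, which by polarization of $\int_X c_2\cdot\alpha^2=c'q(\alpha)$ is a multiple of the form $q$, is represented by a unique class; a short computation shows this forces $c_2=\lambda\,\mathfrak q$, where $\mathfrak q\in H^4$ is obtained by cupping the element of $\sym^2H^2$ dual to $q$. Polarizing the Fujiki relation and contracting with $q^{-1}$ then gives the two evaluations $\int_X\mathfrak q\cdot\alpha^2=\tfrac{c}{3}(b_2+2)q(\alpha)=\tfrac{25c}{3}q(\alpha)$ and $\int_X\mathfrak q^2=\tfrac{c}{3}b_2(b_2+2)=\tfrac{575c}{3}$. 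Feeding these into $c'=\lambda\cdot\tfrac{25c}{3}$ and $828=\int_X c_2^2=\lambda^2\cdot\tfrac{575c}{3}$ and eliminating $\lambda$ yields the key identity $(c')^2=300\,c$, that is $c'=10\sqrt{3c}$. This step---showing that $c$ and $c'$ are not independent but locked together by the $\sym^2$-structure---is where I expect the real work to lie, as it is precisely what forces a perfect square to appear.

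Integrality then finishes the first claim. Reading $\int_X\alpha^4=c\,q(\alpha)^2$ as an identity of integral quartic forms, and using that $q$ is primitive (so $q^2$ is primitive by multiplicativity of content), forces $c\in\mathbb Z$; choosing one $L$ with $q(L)\neq0$ and subtracting the now-rational terms $\tfrac{c}{24}q(L)^2$ and $3$ from the integer $\chi(\oo_X(L))$ shows $c'=10\sqrt{3c}\in\mathbb Q$, so $3c$ is a perfect square, and since $3\mid 3c$ we get $3c=9n^2$, i.e. $c=3n^2$ with $n\in\mathbb N$. Finally, with $c=3n^2$ one has $c'=30n$ and $\chi(\oo_X(H))=\tfrac18\bigl(n^2q(H)^2+10\,nq(H)+24\bigr)$; integrality forces $nq(H)$ to be even, whence $H^4=c\,q(H)^2=3\,(nq(H))^2=12k^2$ for $k=\tfrac12 nq(H)\geq1$, so the minimal self-intersection of an ample class is $12$, attained when $nq(H)=2$. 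In that case $\chi(\oo_X(H))=6$, and since $H$ is ample and $K_X$ is trivial, Kodaira vanishing kills all higher cohomology, giving $h^0(\oo_X(H))=\chi(\oo_X(H))=6$.
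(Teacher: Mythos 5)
Your derivation of the key identity $(c_2\cdot H^2)^2=300\,H^4$ is correct but genuinely different from the paper's. The paper gets it from the Hitchin--Sawon formula $(c_2\cdot\alpha^2)^2=192\int\sqrt{\hat A}\cdot\int\alpha^4$ together with $\int c_2^2=828$; you instead exploit the Verbitsky--Guan isomorphism $\sym^2H^2\simeq H^4$ to force $c_2=\lambda\mathfrak q$ (where $\mathfrak q$ is the class dual to $q$), and then close the system with the polarized Fujiki relation and the same constant $\int c_2^2=828$. I checked your evaluations $\int\mathfrak q\cdot\alpha^2=\tfrac{25c}{3}q(\alpha)$, $\int\mathfrak q^2=\tfrac{575c}{3}$, and the elimination giving $(c')^2=300c$: all correct. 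This route has the virtue of using only inputs already quoted in the paper (the isomorphism \eqref{equa1}, the Hodge diamond) rather than the Hitchin--Sawon theorem, at the cost of invoking the standard fact that $\alpha\mapsto\int c_2\alpha^2$ is proportional to $q$ (which the paper obtains as a byproduct). You should also say a word about the sign of $c'$ (you need $c_2\cdot H^2>0$ to get $h^0=6$ rather than $1$ in the minimal case), though the paper is equally silent on this.

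There is, however, a genuine gap in your integrality step. You claim $c\in\mathbb Z$ because ``$q$ is primitive, so $q^2$ is primitive by multiplicativity of content.'' But for an \emph{even} lattice --- and the B-B lattice of the only known example, $U^3\oplus E_8(-1)^2\oplus\langle-2\rangle$, is even --- the quadratic form $q(x)=\sum b_{ii}x_i^2+\sum_{i<j}2b_{ij}x_ix_j$ has all coefficients even, so its polynomial content is $2$ even though the bilinear form is primitive. Gauss's lemma then only yields $4c\in\mathbb Z$, and your subsequent argument no longer excludes values such as $c=3s^2/4$ with $s$ odd (e.g.\ $c=3/4$, for which $\int\alpha^4=3u^2$ with $q(\alpha)=2u$ is still integral and $(c')^2=300c=225$ is still a perfect square). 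The paper avoids this by a different mechanism: it first shows $H^4=12m^2$ for ample classes, then uses the projectivity criterion to deform any class of positive square to an ample one, and reads off the Fujiki constant from the resulting identity on all positive classes. Your argument can be repaired within your own framework --- apply the integrality of $\chi(\oo_X(L))=\tfrac{c}{24}q(L)^2+\tfrac{c'}{24}q(L)+3$ to \emph{all} integral classes $L$ (via deformation invariance of $c$, $c'$, $q$), which in the residual case forces $q(L)\equiv0\pmod 4$ for every $L$ and hence contradicts primitivity of the associated bilinear form --- but as written the step ``$c\in\mathbb Z$'' does not follow.
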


\begin{proof} First from the H-R-R theorem for IHS fourfolds we 
 infer that
\begin{equation}\label{RR}
h^0(\mathcal{O}_X(H))=\chi(\mathcal{O}_X)(H))=\frac{1}
{24} H^4+\frac{1}{24} c_2(X) H^2+\chi(\mathcal{O}_X).
\end{equation}
Next, by the formula of Hitchin and Sawon we
deduce that 
$$
(c_2(X)\cdot \alpha^2)^2=192 \int \sqrt{\hat{A}(X)}\cdot\int\alpha^4
$$ 
for
any class $\alpha\in H^2(X,\mathbb{R})$ where the $\hat{A}$-genus in our case is
just the Todd genus of~$X$.

We claim that $\int \sqrt{\hat{A}(X)}$ is independent of $X$ with $b_2(X)=23$.
Indeed, by the R-R formula as in \cite{HS} we have
$$
\sqrt{\hat{A}}(X)=\frac{1}{2}\hat{A}_2(X)-\frac{1}{8}\hat{A}_1^2(X),
$$ 
where
$\hat{A}_1(X)=\frac{1}{12}c_2$ and $\hat{A}_2(X)=\frac{1}{720}(3c_2^2-c_4)$.
It remains to show that $c_2^2(X)=828$. But this follows from the fact that
$c_4=324$ and $\hat{A}_2=3$. This proves the claim.

We also  deduce that $\frac{(H^2.c_2(X))^2}{H^4}=300$ so $\sqrt{300 H^4}\in
\mathbb{N}$.
It follows that $H^4=3k^2$. On the other hand, from  (\ref{RR}) we deduce
that
$\frac{k^2}{8}+\frac{10k}{8}\in \mathbb{N}$, thus $k$ is even.

Let us now take an element $\alpha\in H^2(X,\mathbb{Z})$ with positive square.
Then there exists a deformation $Y$ of $X$, and $\beta\in H^{1,1}(Y,\mathbb{Z})$ a Gauss--Manin deformation of $\alpha$
such that $\pm \beta$ is ample (Huybrehts projectivity criterion). In particular we infer $\alpha^4=12 m^2$, where
$m\in \mathbb{Z}$ with $\alpha$ of positive square; so also for all $\alpha \in H^2(X,\mathbb{Z})$.
We conclude that the Fujiki constant is of the form $3 n^2$.
\end{proof}

\begin{rem}
For an IHS manifold $X$ with $b_2(X)=23$ to admit an ample divisor with $H^4=12$
there are two possibilities:
\begin{itemize}\item The Fujiki invariant is $3$, the B-B lattice is even
and there exists 
 an $h\in H^2(X,\mathbb{Z})$ with $(h,h)=2$.
\item The Fujiki invariant is $12$ and there 
 exists an $h\in H^2(X,\mathbb{Z})$
with $(h,h)=1$.
\end{itemize}
It is a natural problem to decide whether the latter case can occur.
\end{rem}

\section{The proof of Theorem \ref{EPW}}
The idea of the proof of our theorem is to construct a quadratic symmetric sheaf $\F$ on the unique sextic $S_A$ containing the scheme $C\subset \PP^5$
(we know that the sextic is unique from \cite {K}). We extract $\F$ from a natural resolution of $\varphi_{\ast}(\mathcal{O}_X(2))$.
The first step will be to find a ``symmetric'' resolution of $\varphi_{\ast}(\mathcal{O}_X(2))$.
The second 
 is to restrict this resolution in order to find the equation of the adjoint sextic.

We find that $\varphi\colon X\to X'\subset \mathbb{P}^5=\PP(W)$ is a
birational morphism and a finite map onto a hypersurface of
degree~$12$. Let us consider the Beilinson monad $\mathcal{M}$ applied to
$\varphi_{\ast}(\mathcal{O}_X(2))$.
This is the following complex:
 $$
\dots\to
\bigoplus_{j=0}^{5}H^j(\varphi_{\ast}(\mathcal{O}_X(2+e-j))\otimes
\Omega^{j-e}_{\mathbb{P}^5}(j-e)\to \dots
$$ 
(see \cite{EFS} and \cite{DE}).
We have $H^j(\varphi_{\ast}(\mathcal{O}_X(2-k)))=H^j(\mathcal{O}_X(2-k))$ since
$\varphi$ is finite.
Let us write the monad $\mathcal{M}$ in the following form:
\begin{center}\begin{tabular}{cccccc}
$ H^4(\mathcal{O}_X(-3))$& $ H^4(\mathcal{O}_X(-2))$&$ H^4(\mathcal{O}_X(-1))$&
$\mathbb{C} $&$0$&$0$\\
$0$&$0$&$0$&$0$&$0$&$0$\\
$0$&$0$&$0$&$\mathbb{C} $&$0$&$0$\\
$0$&$0$&$0$&$0$&$0$&$0$  \\
$0$&$0$&$0$&$\mathbb{C} $&$H^0(\mathcal{O}_X(1))$&$ H^0(\mathcal{O}_X(2))$
\end{tabular}
\end{center}
From \cite[Cor.~6.2]{EFS} the 
maps in the last row correspond to the natural
multiplication map $W\otimes H^0(\mathcal{O}(k))\to H^0(\mathcal{O}(k+1))$.
Since by a result of Guan \cite{G} we have
$\Sym^2H^0(\mathcal{O}_X(1))=H^0(\mathcal{O}_X(2))$, the maps in the last row
correspond to the maps in the Beilinson monad of
$\mathcal{O}_{\mathbb{P}^5}(2)$.
Moreover, we denote by $A$ a vector space such that $A^{\vee} \oplus
\Sym^3H^0(\mathcal{O}_X(1))=H^0(\mathcal{O}_X(3))$.
Then analogously the natural complex
$$
0\to \Omega_{\PP^5}^3(3)\to \Omega_{\PP^5}^2(2)\otimes W\to\Omega_{\PP^5}^1(1)\otimes \Sym^2 W\to
\mathcal{O}\otimes \Sym^3W 
$$
is exact and is a free resolution of
$\mathcal{O}_{\mathbb{P}^5}(3)$.
Its Serre dual can be seen as a part of the first row of the monad.

We claim that our Beilinson monad is cohomologous to the following
(cf.~\cite{CS}):
\begin{center}
\begin{tabular}{cccccc}
$\Omega_{\PP^5}^5(5)\otimes A\oplus \oo_{\PP^5}(-4)$ &$0$&$\ \ \ \ \ \ \ 0\ \ \ \ \ \ \ \ $&$\ \ \ \ \ \ \  0\ \ \ \ \ \  $&$\ \ \ \ \ \ 0\ \ \ \ \ \ $&$0$\\
$0$&$0$&$0$&$0$&$0$&$0$\\
$0$&$0$&$0$&$\Omega_{\PP^5}^2(2)$&$0$&$0$\\
$0$&$0$&$0$&$0$&$0$&$0$  \\
$0$&$0$&$0$&$0$&$0$&$\mathcal{O}_{\mathbb{P}^5}(2)$
\end{tabular}
 \end{center}

Let us consider the complex $\mathcal{T}$ constructed from the bottom row of $\mathcal{M}$,
$$
\mathcal{T}\colon 0\to \Omega^2_{\PP^5}(2)  \to \Omega^1_{\PP^5}(1)\otimes  H^0(\mathcal{O}_X(1)) \to  \oo_{\PP^5} \otimes H^0(\mathcal{O}_X(2)) \to 0.
$$
It is naturally a subcomplex of $\mathcal{M}$ such that the quotient
complex is denoted by $\mathcal{M}'$. We have an exact sequence of complexes
\begin{equation}\label{123} 
0\to \mathcal{T}\to \mathcal{M}\to \mathcal{M}'\to 0.
\end{equation}

Denote now by $\mathcal{N}$  the complex obtained by replacing the bottom row of $\mathcal{M}$ by
$\mathcal{O}_{\mathbb{P}^5}(2)$, i.e.
\begin{multline*}
\Omega_{\PP^5}^5(5)\otimes H^4(\oo_X(-3))\to \oo_{\PP^5}(2)\oplus \Omega_{\PP^5}^2(2)\oplus \Omega_{\PP^5}^4(4)\otimes H^4(\oo_X(-2))\\
\to
\Omega_{\PP^5}^3(3)\otimes H^4(\oo_X(-1))\to \Omega_{\PP^5}^2(2) .
\end{multline*}
This complex also maps surjectively onto $\mathcal{M}'$ with kernel $\mathcal{K}$; we thus
obtain another exact sequence of complexes:
\begin{equation}\label{124} 
0\to\mathcal{K}\to \mathcal{N}\to \mathcal{M}'\to 0
\end{equation}
From the long exact  homology sequence associated with  (\ref{123}) we infer that the only non-zero homology spaces are 
$$
H_1(\mathcal{M}')\to \oo_{\PP^5}(2)\to \varphi_{\ast}\oo_{X}(2) \to  H_0(\mathcal{M}').
$$
Looking now at the second sequence (\ref{124}) we infer that the only non-zero homology of $\mathcal{N}$ is in degree $0$.
We deduce from the $5$-lemma that this term is isomorphic to $\varphi_{\ast}\oo_X(2)$.
We treat the upper row of our monad similarly and deduce our claim. 

So from \cite[Thm.~6.1]{EFS} we obtain an exact sequence
\begin{equation}\label{Eq X}
0\rightarrow \mathcal{O}_{\mathbb{P}^5}(-6)\oplus A
\otimes \mathcal{O}_{\mathbb{P}^5}(-3) \xrightarrow{F} \Omega^2_{\mathbb{P}^5}
\oplus\mathcal{O}_{\mathbb{P}^5}\rightarrow \varphi_{\ast}(\mathcal{O}_X
)\rightarrow 0.
\end{equation}
where $A$ is the $10$-dimensional vector space, dual to the quotient
of $H^0(\mathcal{O}_X(3))$ by the cubics of $\mathbb{P}^5$.

We shall show that the sheaf $\varphi_{\ast}(\mathcal{O}_X)$ is symmetric so that we can apply
the results of \cite{EPW} and find that we can choose the map $F$ as symmetric as possible.

 \begin{lemm}
 There exists a symmetric isomorphism 
\[
a\colon
 \varphi_{\ast}(\mathcal{O}_X (3)) \to
\ext^1_{\mathcal{O}_{\mathbb{P}^5}}(\varphi_{\ast}(\mathcal{O}_X
(3)),\mathcal{O}_{\mathbb{P}^5}).
\]
\end{lemm}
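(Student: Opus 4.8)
The plan is to obtain $a$ directly from Grothendieck--Serre duality for the composite finite map $\psi\colon X\xrightarrow{\varphi}X'\hookrightarrow\PP^5$, and to read off the symmetry from the commutativity of multiplication of sections on $X$. Throughout I regard $\varphi_{\ast}(\oo_X(3))$ as the $\oo_{\PP^5}$-module $\psi_{\ast}(\oo_X(3))$, and I write $\oo_X(1):=\psi^{\ast}\oo_{\PP^5}(1)$.

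First I would record the dualizing data. As a morphism between smooth varieties $\psi$ is a local complete intersection morphism, so
\[
\psi^{!}\oo_{\PP^5}\;\cong\;\omega_X\otimes\psi^{\ast}\omega_{\PP^5}^{-1}[\dim X-\dim\PP^5]\;\cong\;\oo_X(6)[-1],
\]
using $\omega_X\cong\oo_X$ (as $X$ is IHS) and $\omega_{\PP^5}\cong\oo_{\PP^5}(-6)$. Observe that the ``self-dual twist'', namely the $t$ for which $\HOM_X(\oo_X(t),\oo_X(6))\cong\oo_X(t)$, is forced to be $t=3$; this is exactly why the statement concerns $\oo_X(3)$.

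Second, since $X$ is smooth the sheaf $\oo_X(3)$ is locally free, so $\F:=\psi_{\ast}\oo_X(3)$ is a maximal Cohen--Macaulay sheaf supported on the degree-$12$ hypersurface $X'$, of codimension one in $\PP^5$. Hence $\ext^{q}_{\oo_{\PP^5}}(\F,\oo_{\PP^5})=0$ for $q\ne 1$ and $R\HOM(\F,\oo_{\PP^5})\cong\ext^{1}(\F,\oo_{\PP^5})[-1]$. Grothendieck duality for $\psi$ now gives
\[
R\HOM_{\PP^5}\!\big(\psi_{\ast}\oo_X(3),\,\oo_{\PP^5}\big)\;\cong\;R\psi_{\ast}\,R\HOM_X\!\big(\oo_X(3),\,\psi^{!}\oo_{\PP^5}\big)\;\cong\;\big(\psi_{\ast}\oo_X(3)\big)[-1],
\]
and passing to the $(-1)$st cohomology sheaf produces an isomorphism $a\colon\F\xrightarrow{\ \sim\ }\ext^{1}_{\oo_{\PP^5}}(\F,\oo_{\PP^5})$.

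Third, to prove that $a$ is symmetric I would exhibit it as the self-duality attached to an explicitly symmetric pairing. Using the lax monoidal structure of $\psi_{\ast}$, the commutative multiplication of sections $\oo_X(3)\otimes\oo_X(3)\to\oo_X(6)$ on $X$, and the trace $\mathrm{tr}\colon R\psi_{\ast}\psi^{!}\oo_{\PP^5}\to\oo_{\PP^5}$ of duality, I form the degree-one pairing
\[
\F\otimes_{\oo_{\PP^5}}\F\longrightarrow\psi_{\ast}\oo_X(6)\;=\;R\psi_{\ast}\psi^{!}\oo_{\PP^5}[1]\xrightarrow{\ \mathrm{tr}[1]\ }\oo_{\PP^5}[1],
\]
whose adjoint is precisely $a$. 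Because $s\otimes t\mapsto st$ is commutative on $X$, while the monoidal structure map and the trace are canonical and order-independent, this pairing is invariant under interchanging the two copies of $\F$, which is the required symmetry. The only genuine difficulty lies here: one must check that the symmetry is not destroyed by a Koszul sign coming either from the shift $[1]$ in $R\HOM(\F,\oo_{\PP^5})[1]$ or from the biduality identification $\F\cong\ext^{1}(\ext^{1}(\F,\oo_{\PP^5}),\oo_{\PP^5})$. I would settle this by tracking the pairing before any shift, where it is literally the commutative multiplication on the smooth variety $X$; the canonicity of the Grothendieck trace for $\psi$ then leaves at most an overall sign, which is absorbed into the notion of a quadratic symmetric sheaf in the sense of \cite{EPW}. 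This is exactly the input needed in the next step, where the results of \cite{EPW} let us choose the map $F$ of the resolution~\eqref{Eq X} as symmetric as possible.
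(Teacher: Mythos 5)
Your construction of the isomorphism $a$ is sound and is essentially the paper's first step in different clothing: the paper obtains the same map from relative duality, $\HOM_{\oo_{\PP^5}}(\varphi_{\ast}\oo_X(-3),\omega_{X'})\cong\varphi_{\ast}\oo_X(3)$, together with the connecting homomorphism of $0\to\oo_{\PP^5}(-6)\to\oo_{\PP^5}(6)\to\omega_{X'}\to0$, the point being that multiplication by the equation of $X'$ annihilates any sheaf supported on $X'$; your observation that $\psi_{\ast}\oo_X(3)$ is maximal Cohen--Macaulay of codimension one, so that only $\ext^1$ survives in $R\HOM(\F,\oo_{\PP^5})$, plays exactly the same role. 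Up to this point the two arguments are interchangeable.

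The gap is in the symmetry argument. The ``overall sign'' you flag at the end is not a harmless normalization: it is precisely the dichotomy between a symmetric and a skew-symmetric self-duality, and a skew-symmetric $a$ cannot be absorbed into the notion of a quadratic symmetric sheaf in the sense of \cite{EPW} --- it would instead yield a skew-symmetric presentation matrix in the resolution of $\varphi_{\ast}\oo_X$, whose determinant is the square of a Pfaffian, so the hypersurface $X'$ would be non-reduced and the whole EPW mechanism (which needs a Lagrangian, hence symmetric, structure) would not apply. Deciding this sign by formal bookkeeping of Koszul signs in Grothendieck duality is exactly the step you leave open, and it is the crux of the lemma. The paper resolves it by a rigidity-plus-geometry argument following \cite[\S 2]{CS}: since $\Hom_{\oo_{\PP^5}}(\oo_{X'}(3),\oo_{X'}(3))=\mathbb{C}$, any two duality isomorphisms differ by a scalar, so $\ext^1_{\oo_{\PP^5}}(a,\oo_{\PP^5})=\lambda a$ with $\lambda^2=1$, and the case $\lambda=-1$ is excluded because a skew-symmetric $\varphi_{\ast}(\oo_X(3))$ forces $X'$ to be non-reduced, contradicting that $X'$ is the birational image of the irreducible fourfold $X$. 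You need to import this (or an equally concrete geometric) input; as written, your proof establishes the isomorphism but not its symmetry.
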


\begin{proof}
 By relative duality,
$\HOM_{\mathcal{O}_{\mathbb{P}^5}}(\varphi_{\ast}(\mathcal{O}_X(-3)
),\omega_{X'})=\varphi_{\ast}(\mathcal{O}_X (3))$.
Now applying the functor
$\HOM_{\mathcal{O}_{\mathbb{P}^5}}(\varphi_{\ast}(\mathcal{O}_X
(-3)),\cdot)$ 
to the exact sequence
$$
0\to\mathcal{O}_{\mathbb{P}^5}(-6)\to\mathcal{O}_{\mathbb{P}^5}(6)\to \omega_{X'}\to0
$$
we obtain
\[\HOM_{\mathcal{O}_{\mathbb{P}^5}}(\varphi_{\ast}(\mathcal{O}_X
(-3)),\omega_{X'})\to
\ext^1_{\mathcal{O}_{\mathbb{P}^5}}(\varphi_{\ast}(\mathcal{O}_X
(-3)),\mathcal{O}_{\mathbb{P}^5}(-6))
\xrightarrow{k}\ext^1_{\mathcal{O}_{\mathbb{P}^5}}(\varphi_{\ast}
(\mathcal{O}_X(-3)),\mathcal{O}_{\mathbb{P}^5}(6))
\]
where $k$ is locally given by  multiplication by the equation of
$X'\subset\mathbb{P}^5$, so it is~$0$.
From the projection formula we obtain an isomorphism
 $$
\HOM_{\mathcal{O}_{\mathbb{P}^5}}(\varphi_{\ast}(\mathcal{O}_X
(-3)),\omega_{X'})\to
\ext^1_{\mathcal{O}_{\mathbb{P}^5}}(\varphi_{\ast}(\mathcal{O}_X
(3)),\mathcal{O}_{\mathbb{P}^5}).
$$

To see that $a$ is symmetric we repeat the arguments from \cite[\S 2]{CS}.
First we get 
$$
\Hom_{\oo_{\PP^5}}(\oo_{X'}(3),\oo_{X'}(3))=\Hom_{\oo_{\PP^5}}(\oo_{X'},\oo_{X'})=\mathbb{C}.
$$
Next $a'=\ext^1_{\oo_{\PP^5}}(a, \oo_{\PP^5})=\lambda a$; but
$\ext^1_{\oo_{\PP^5}}(a',\oo_{\PP^5})=a$, thus $\lambda^2=1$, so $\lambda=\pm 1$.
If $\lambda=-1$ then $ \varphi_{\ast}(\mathcal{O}_X (3))$ is skew-symmetric, so arguing as in \cite[\S 2]{CS} we find that the hypersurface $X'\subset \PP^5$ is non-reduced;
this is a contradiction.
It follows that $\lambda=1$, so $a$ is symmetric.
\end{proof}

Since $S^2(\mathcal{O}_{\mathbb{P}^5}(-3)\oplus A \otimes
\mathcal{O}_{\mathbb{P}^5})$ is a sum of line bundles, we deduce that
$$
\Ext_{\mathcal{O}_{\mathbb{P}^5}}^1(S^2(\mathcal{O}_{\mathbb{P}^5}(-3)\oplus A
\otimes
\mathcal{O}_{\mathbb{P}^5}),\mathcal{O}_{\mathbb{P}^5})=0.
$$ 
Thus we
deduce as in the proof of \cite[Thm.~9.2]{EPW} that there is no
obstruction for $a^{-1}$ to be a chain map, so we can find a map $\psi$ that
closes the following diagram:
\begin{equation} \label{diagr} 
\footnotesize
\begin{CD} 
0 @>>> \mathcal{O}_{\mathbb{P}^5}(-3)\oplus \Omega^3_{\mathbb{P}^5}(3) @>F^{\ast}>>
\mathcal{O}_{\mathbb{P}^5}(3)\oplus A^{\vee} \otimes
\mathcal{O}_{\mathbb{P}^5} @>>> \ext^1_{\mathcal{O}_{\mathbb{P}^5}}(\varphi_{\ast}(\mathcal{O}_X
(3)),\mathcal{O}_{\mathbb{P}^5}) @>>> 0\\
@. @V\psi^{\ast}VV @VV\psi V @V a^{-1} VV  @.\\
0 @>>> \mathcal{O}_{\mathbb{P}^5}(-3)\oplus A \otimes
\mathcal{O}_{\mathbb{P}^5} @>F>> \mathcal{O}_{\mathbb{P}^5}(3)\oplus
\Omega^2_{\mathbb{P}^5}(3)@>>> \varphi_{\ast}(\mathcal{O}_X (3)) @>>> 0 
\end{CD}
\end{equation}

Now arguing again as in the proof of \cite[\S 5]{EPW} we can choose a chain map
such that $\psi F^{\ast}$ is a symmetric map.

Our aim now is to make the second step:  extract from $F$ a map $f$ whose determinant gives the adjoint sextic.
We show first that the resolution of the ideal of the conductor of $\varphi$ is obtained by restricting the resolution in (\ref{diagr}).

Recall that the conductor of the finite map $\varphi\colon X\to X'$ is the annihilator of
the $\mathcal{O}_{X'}$-module $\varphi_{\ast}
(\mathcal{O}_X)/\mathcal{O}_{X'}$ and is isomorphic to the
sheaf $\HOM(\varphi_{\ast}(\mathcal{O}_X),\mathcal{O}_{X'})$.
From \cite[7.2 page~249]{H} we deduce that
$\varphi^{!}\omega_{X'}=\omega_{X}$,
so $$
\varphi_{\ast}\oo_X=\varphi_{\ast}(\varphi^{!}\omega_{X'})=\HOM_{\oo_{X'}}(\varphi_{\ast}(\omega_{X}),\oo_{X'}(6)).
$$
On the other hand, $\omega_X$ is trivial, so the conductor $\mathcal{C}$ is isomorphic to $\varphi_{\ast}(\oo_{X}(-6))$.
The inclusion $\mathcal{C}\subset \oo_{X'}$ can be lifted to a map of complexes:
$$
\begin{CD} 
0 @>>> \mathcal{O}_{\mathbb{P}^5}(-12)\oplus A \otimes
\mathcal{O}_{\mathbb{P}^5}(-9) @>F>> \mathcal{O}_{\mathbb{P}^5}(-6)\oplus
\Omega^2_{\mathbb{P}^5}(-6)@>>> \mathcal{C} @>>> 0 \\
@. @VVV @VVV @VVV  @.\\
0 @>>>\oo_{\PP^5}(-12)@>{\det F}>> \oo_{\PP^5}@>>> \oo_{X'}@>>> 0
 \end{CD}
$$

By the mapping cone construction \cite[Prop.~6.15]{E} we obtain the
non-minimal resolution
\[ 
0\to\mathcal{O}_{\mathbb{P}^5}(-12)\oplus A \otimes \mathcal{O}_{\mathbb{P}^5}(-9) \to \mathcal{O}_{\mathbb{P}^5}(-6)\oplus
\Omega^2_{\mathbb{P}^5}(-6)\oplus \oo_{\PP^5}(-12)\to \mathcal{I}_{C|\PP^5} \to 0,
\]
where $C\subset X'\subset\mathbb{P}^5$ is the subscheme defined by the conductor $\mathcal{C}$.
It follows that the following map $b$ is given by  restriction of $F$:
\begin{equation}\label{Eq C} 
0\rightarrow
10\mathcal{O}_{\mathbb{P}^5}(-9)\xrightarrow{b}\Omega_{\mathbb{P}^5}^2(-6)\oplus
\mathcal{O}_{\mathbb{P}^5}(-6)\to\mathcal{I}_{C|\PP^5}\rightarrow 0 .
\end{equation}
Recall that $C$ is supported on the
singular locus of $X'$; moreover, it is locally Cohen--Macaulay has pure
dimension~$3$
and degree~$36$ (see \cite{K}).

Consider the part of $b$ given by 
\begin{equation}\label{part-f} 
A \otimes \mathcal{O}_{\mathbb{P}^5}(-3)
\xrightarrow{f} \Omega^2_{\mathbb{P}^5}. 
\end{equation}
The determinant of this map gives the unique sextic $ S_A\subset
\mathbb{P}^5$ containing $C$. Indeed, taking the long exact sequence associated to  \ref{Eq C} tensorized by $\oo_{\PP^5}(6)$
we see that the unique sextic containing $C\subset \PP^5$ is the image of $H^0(\oo_{\PP^5})\subset H^0(\oo_{\PP^5}\oplus \Omega^2_{\PP^5})$.

Since there is no non-zero map $ \mathcal{O}_{\mathbb{P}^5}(3) \rightarrow
\Omega^2_{\mathbb{P}^5}(3) $, the restriction of the above diagram (\ref{diagr}) gives
$$ 
\begin{CD} \Omega^3_{\mathbb{P}^5}(3) @>f^{\ast}>> A^{\vee} \otimes
\mathcal{O}_{\mathbb{P}^5}\\
@V\rho^{\ast}VV @V\rho VV \\
 A \otimes \mathcal{O}_{\mathbb{P}^5} @>f>>
\Omega^2_{\mathbb{P}^5}(3) 
\end{CD}
$$
where $\rho f^{\ast}$ is a symmetric map which is the restriction of $\psi
F^{\ast}$ to $\Omega^3_{\mathbb{P}^5}(3)$.
We saw in  (\ref{part-f}) that $\det f$ gives the equation of the adjoint sextic.
The cokernels $\F$ and $\F^{\ast}$ of $f$ and $f^{\ast}$ are sheaves supported on the adjoint sextic.
We complete the diagram such that
$$
\begin{CD}
0 @>>> \Omega^3_{\mathbb{P}^5}(3) @>f^{\ast}>> A^{\vee} \otimes
\mathcal{O}_{\mathbb{P}^5} @>>> \F^{\ast} @>>> 0\\
@. @V\rho^{\ast}VV @V\rho VV @V\alpha VV @.\\
0 @>>> A \otimes \mathcal{O}_{\mathbb{P}^5} @>f>>
\Omega^2_{\mathbb{P}^5}(3) @>>> \F @>>> 0 
\end{CD}
$$
 Since $\rho f^{\ast}$ is symmetric, we infer that $\F$ is a symmetric sheaf supported on the adjoint sextic $\det f$ with resolution
$$
0 \to A \otimes \mathcal{O}\xrightarrow{f}
\Omega^2_{\mathbb{P}^5}(3) \to \F \to 0 ,
$$
thus the adjoint sextic is an EPW sextic (see \cite[\S 9.3]{EPW}).

\section{The proof of Proposition \ref{general case}}\label{section-gen} 
For contradiction, suppose that a sextic $S_A$ with $\Theta_A=\emptyset$ can be the adjoint hypersurface which is the image of an IHS fourfold.
Recall that such a generic EPW sextic is singular along a surface of degree $40$ with $A_1$ singularities along this surface.
The idea of the proof of our Proposition is to construct a resolution $V$  of singularities of $S_A$ and then to consider the pull-back of the fourfold $\varphi_{|H|}(X)\subset \PP^5$ on $V$.
We obtain a contradiction by considering the natural duality of $V$. 

We shall first construct the desingularization  $V$ in Section \ref{12}.
In Section \ref{dual} we describe the duality on $V$. The proof of our proposition is given in Section \ref{proof}.

\subsection{Desingularization of EPW sextic}\label{12}
Let us construct $V$. First consider 
$$
O_2=\{[\alpha \wedge \omega] \in \mathbb{P}(\textstyle\bigwedge^3 W) \ | \ \alpha \in W,\
\omega \in\textstyle\bigwedge^2 W\} \subset \PP(\textstyle\bigwedge^3 W),
$$
the closure of the second orbit of the action of $PGL(W)$ on $\PP(\bigwedge^3W)$ (see the Appendix).
Note that $O_2$ is singular along $G(3,W)$; moreover, we have the following diagram:
\begin{equation}\label{diag4}
\xymatrix{ \mathbb{P}(W)& O_2 \ar@{.>}[l]_{\ \ \ \pi_1} \ar@{.>}[r]^{\pi_2\ \ \ \ }&\mathbb{P}(W^{\vee})
}
\end{equation}
such that $\pi_1([\alpha\wedge v])=[\alpha]\in \PP(W)$ and $\pi_2([\alpha\wedge v])=[\alpha\wedge v\wedge v]\in \PP(W^{\vee})$ for $\alpha \in W,\
\omega \in\bigwedge^2 W$. The maps $\pi_1$ and $\pi_2$ are rational, and  defined outside $G(3,W)\subset O_2$ by Lemma \ref{well-defined}.

Now the wedge product $\bigwedge^3W\oplus
\bigwedge^3W\to\bigwedge^6W=\mathbb{C}$ induces a skew-symmetric form on $\bigwedge^3 W$. We consider a maximal $10$-dimensional Lagrangian subspace $A\subset \bigwedge^3 W$ isotropic with respect to this form.
For a fixed $A$ we define the manifold 
$$V':=\mathbb{P}(A)\cap O_2.
$$

\begin{prop}\label{smoothV} 
The image $\pi_1(V')$ is the EPW sextic $S_A$ associated to $A$.
Moreover, if $\Theta_A=\emptyset$ then $V'$ is a smooth Calabi--Yau fourfold.
\end{prop}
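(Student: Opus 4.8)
```latex
The plan is to analyze $V' = \mathbb{P}(A) \cap O_2$ in two stages: first identify the image $\pi_1(V')$ with the EPW sextic $S_A$, and then, under the hypothesis $\Theta_A = \emptyset$, establish that $V'$ is smooth of the expected dimension with trivial canonical bundle. For the first part, I would unwind the definitions. A point of $O_2$ has the form $[\alpha \wedge \omega]$ with $\alpha \in W$, $\omega \in \bigwedge^2 W$, and $\pi_1$ sends it to $[\alpha] \in \mathbb{P}(W)$. Thus $\pi_1(V')$ consists of those $[\alpha] \in \mathbb{P}(W)$ for which there exists a decomposable-in-this-sense vector $\alpha \wedge \omega \in A$. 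The key is to recognize that the condition ``$\alpha \wedge \omega \in A$ for some $\omega$'' is exactly the degeneracy condition defining the EPW sextic: the map $\bigwedge^2 W \to \bigwedge^3 W / A$ given by $\omega \mapsto \alpha \wedge \omega \bmod A$ drops rank precisely on $S_A$. I would make this precise by comparing with the Lagrangian-degeneracy description in~\eqref{eqEPW}, using that $\alpha \mapsto \alpha \wedge (-)$ has a fixed-dimensional kernel and that $A$ is Lagrangian of dimension $10$; a dimension count then shows the fiber $\pi_1^{-1}([\alpha]) \cap V'$ is nonempty exactly when $[\alpha] \in S_A$, giving $\pi_1(V') = S_A$.

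For the smoothness statement, the natural approach is to exhibit $V'$ as a zero locus or degeneracy locus and apply a Bertini-type or transversality argument controlled by the hypothesis $\Theta_A = \emptyset$. I would first recall from the Appendix the local structure of $O_2$: it is a fivefold-bundle-type variety over $\mathbb{P}(W)$ (the total space of a projective bundle whose fibers record the choices of $\omega$ modulo the stabilizer), smooth away from $G(3,W)$ where it has singularities. The intersection $V' = \mathbb{P}(A) \cap O_2$ then has expected dimension $\dim O_2 + \dim \mathbb{P}(A) - \dim \mathbb{P}(\bigwedge^3 W) = 13 + 9 - 19 = 3$; I must explain why the actual dimension matches and why $V'$ is a fourfold rather than threefold, which means recomputing with the correct value $\dim O_2 = 14$ so that $14 + 9 - 19 = 4$. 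The crucial point is that $V'$ meets the singular locus $G(3,W)$ of $O_2$ exactly along $G(3,W) \cap \mathbb{P}(A)$, which by definition~\eqref{2.2} is precisely $\Theta_A$. Therefore the hypothesis $\Theta_A = \emptyset$ guarantees that $V'$ avoids the singular stratum of $O_2$ entirely, so $V'$ is cut out inside the smooth locus $O_2 \setminus G(3,W)$.

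It then remains to verify transversality of the linear section on the smooth locus. Here I would argue that for a Lagrangian $A$ with $\Theta_A = \emptyset$, the linear space $\mathbb{P}(A)$ meets $O_2 \setminus G(3,W)$ transversally; the obstruction to transversality is concentrated along $G(3,W)$ (the tangent spaces to $O_2$ degenerate precisely there), so avoiding $G(3,W)$ should force cleanness of the intersection. The canonical bundle computation proceeds by adjunction: having realized $V'$ as a transverse linear section of $O_2$ inside $\mathbb{P}(\bigwedge^3 W)$, I would compute $\omega_{V'}$ from $\omega_{O_2}$ restricted to the smooth locus twisted by the appropriate power of $\mathcal{O}(1)$, and show the twists cancel to give $\omega_{V'} \cong \mathcal{O}_{V'}$. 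Simple-connectedness plus trivial canonical bundle yields the Calabi--Yau conclusion.

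The main obstacle I anticipate is the transversality verification on $O_2 \setminus G(3,W)$: it is not automatic that an arbitrary Lagrangian section is transverse, and I expect the proof genuinely to use the isotropy of $A$ with respect to the symplectic form on $\bigwedge^3 W$ together with $\Theta_A = \emptyset$, not merely genericity. Concretely, one must show that at each point $[\alpha \wedge \omega] \in V'$ the tangent space $T_{[\alpha \wedge \omega]} O_2$ and $\mathbb{P}(A)$ span the whole $\mathbb{P}(\bigwedge^3 W)$; I would reduce this to a statement about the Lagrangian condition interacting with the tangent cone of $O_2$, which is where the Appendix's orbit analysis becomes essential. The canonical bundle computation, by contrast, I expect to be routine once the section is known to be clean and transverse, following the standard adjunction formula for linear sections of a projectively normal variety with known dualizing sheaf.
```
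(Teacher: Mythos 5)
Your overall strategy coincides with the paper's: identify $\pi_1(V')$ with the degeneracy locus of the map $A\otimes\mathcal{O}_{\PP^5}\to\Omega^2_{\PP^5}(3)$ (equivalently, the locus of $[\alpha]$ with $F_\alpha\cap\PP(A)\neq\emptyset$), observe that $\Theta_A=\emptyset$ forces $V'$ to avoid the singular stratum $G(3,W)$ of $O_2$, prove smoothness by a tangent-space argument exploiting that $A$ is Lagrangian, and finish the canonical bundle by adjunction (the paper does this on the desingularization $\PP(\Omega^3_{\PP^5}(3))$, where $V=\alpha^{-1}(V')\cong V'$ is cut out by ten divisors in $|T|$, rather than directly on $O_2$, but this is a cosmetic difference).

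The genuine gap is that you never supply the statement that makes the smoothness argument work, and you cannot get it from soft considerations. Your intermediate claim that ``avoiding $G(3,W)$ should force cleanness of the intersection'' is false as a general principle: $T_pO_2$ has dimension $14$ in $\PP^{19}$, so a $9$-dimensional linear space can perfectly well meet it in dimension $\geq 5$ at a smooth point of $O_2$; nothing about merely missing the singular locus prevents this. You correctly retract this in your last paragraph and point to ``the Lagrangian condition interacting with the tangent cone,'' but you leave that interaction as an anticipated obstacle rather than resolving it. The paper's resolution is Proposition \ref{tgW_2}: at a smooth point $p\in O_2$ there is \emph{no} $5$-dimensional isotropic subspace $K\subset T_pO_2$ with $p\in K$ and $K\cap F_p\cap F_p'\cap G(3,W)=\emptyset$. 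Granting this, smoothness is immediate: if $V'$ were singular at $p$, then $Z=\PP(A)\cap T_pO_2$ would have dimension $\geq 5$, it is automatically isotropic because $A$ is Lagrangian, and so it would have to meet $G(3,W)$, contradicting $\Theta_A=\emptyset$. The proof of Proposition \ref{tgW_2} is the real content and uses the explicit description of $T_pO_2$ as the span of $F_p$, $F_p'$ and the space $\Pi$ (Lemma \ref{styczna do W2}): a Lagrangian $L$ of $T_pO_2$ through $p$ lies in the orthogonal hyperplane $\Sigma_p$, hence in the $\PP^{13}$ spanned by the two fibers (Remark \ref{remark on T p cap Sigma p}); a rank computation of the symplectic form on that $\PP^{13}$ forces $F_p\cap F_p'\subset L$ with $\dim L=9$, so any $5$-dimensional subspace of $L$ meets the $\PP^5=F_p\cap F_p'$ in at least a line, and that line must meet the quadric $F_p\cap F_p'\cap G(3,W)$. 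Without this (or an equivalent) argument, the smoothness claim — and hence the Calabi--Yau conclusion — is unproven.
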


\begin{proof} In order to find the image $\pi_1(V')$ we consider a natural desingularization of $O_2$ which is the projectivization of the vector bundle $\PP(\Omega_{\PP^5}^3(3))$. Comparing the following construction with the definition of the EPW sextic given in the introduction (see (\ref{eqEPW})) we deduce the first part of the statement.

Let us describe this desingularization.
From \cite[Thm.~9.2]{EPW} we can see that ${f^{\ast}}\choose{\rho^{\vee}}$ defines an
embedding of $\Omega_{\PP^5}^3(3)$ as a symplectic subbundle of $(A\oplus
A^{\vee})\otimes \mathcal{O}_{\mathbb{P}^5}=\bigwedge^3W \otimes
\mathcal{O}_{\mathbb{P}^5}$.
On the other hand, from \cite[\S 5.2]{O1} we deduce that we can look at
$\bigwedge^3W \otimes \mathcal{O}_{\mathbb{P}^5}$ as a symplectic vector bundle
with the symplectic form induced from the wedge product $\bigwedge^3W\oplus
\bigwedge^3W\to\bigwedge^6W=\mathbb{C}$
such that the fiber of the subbundle $\Omega_{\PP^5}^3(3)$ over $v\in
\mathbb{P}^5$ corresponds to the $10$-dimensional linear space
$$
F_v=\{[v \wedge \gamma]\in \PP(\textstyle\bigwedge^3W)\mid \gamma \in  \textstyle\bigwedge^2W \} \subset
\PP(\textstyle\bigwedge^3W).$$
Then $\rho^{\ast}$ is given by the above embedding composed with the quotient map
$$\textstyle\bigwedge^3W\otimes \mathcal{O}_{\mathbb{P}(W)}\to (\textstyle\bigwedge^3W / A)\otimes
\mathcal{O}_{\mathbb{P}(W)},$$
where $A$ is a Lagrangian subspace of $\bigwedge^3W$ (there is a canonical
isomorphism $\bigwedge^3W / A=A^{\vee}$).
More precisely, we have a diagram
$$
\xymatrix{\PP(\textstyle\bigwedge^3 W)\supset O_2 \ar@{.>}[r]^{\ \ \ \ \pi_1} & \PP^5\\
& \PP(\Omega_{\PP^5}^3(3)) \ar[u]^{\pi} \ar[ul]^{\alpha}}$$
 such that the image of $\alpha$ is the variety $O_2$. The first part follows.

Let us prove the second part. The smoothness follows from Proposition \ref{tgW_2}. Indeed, suppose that $V'$ is singular at a point $p$.
Then $\PP(A)$ intersects the tangent space to $O_2$ in a non-transversal way along a $5$-dimensional isotropic subspace $Z$.
By Proposition \ref{tgW_2} the space $Z$ has to cut $G(3,W)$, a contradiction.

Let us find the canonical bundle of $V'$.
Observe that $\alpha$ is given by the complete linear system of the \emph{line bundle}
$\mathcal{T}:=\mathcal{O}_{\mathbb{P}(\Omega^3_{\mathbb{P}^5}(3))}(-1)$.
Denote  
$$
V:=\alpha^{-1}(\mathbb{P}(A)\cap O_2).
$$
Since $V$ is smooth, $V'$ is isomorphic to $V$.
We find the canonical divisor of $V$ using the adjunction formula and the knowledge of the canonical divisor of $\PP(\Omega^3_{\PP^5}(3))$.
The dimension of the cohomology group 
$h^1(\oo_{V'})$ is found by using the Lefschetz hyperplane theorem.
\end{proof}
\begin{rem}\label{rema}
 Alternatively, for the proof of the last Proposition, we can use the results from \cite{O1} to prove that $V$ is the blow-up of the quotient of $X_A$ by an anty-symplectic involution. We infer in this way that $V$ is a smooth Calabi--Yau fourfold with Picard group of rank $2.$
\end{rem}

\begin{rem}\label{alpha}
Denote by $E$ the exceptional divisor of $\alpha$. It maps to $G(3,W)\subset
O_2$ such that the fiber over a
point $U\in G(3,W)$ is a projective plane that maps under $\pi$ to
$\mathbb{P}(U)\subset\mathbb{P}(W)$.
Moreover, $E$ is isomorphic to the projectivization of the tautological bundle
on $G(3,W)$.
By Lemma \ref{H_1+H_2} we deduce that the pull-back $(\alpha\circ
\pi_2)^{\ast}(H_2)$ is a Cartier divisor in the linear system $|2T-H|$ on
$\mathbb{P}(\Omega_{\PP^5}^{3}(3))$. Moreover, $E$ is the base locus of $|2T-H|$ such that
after blowing-up $E\subset \mathbb{P}(\Omega_{\PP^5}^3(3))$ this linear system
become base-point-free and factorizes through $\pi_2$.
\end{rem}

The idea of the proof of Proposition \ref{general case} is by contradiction.
Denote by $H$ the pull-back by $\pi \colon V\to \PP(W)$ of the hyperplane section in $\PP(W)$ and $T\in |T|$. From Proposition \ref{Pic(W_2)} and the Lefshetz theorem (or from Remark \ref{rema}) the divisors $H$ and $T$ generate $\Pic(V)$.
First we need the following:

\begin{prop}\label{ciC}  There exists a divisor $D\subset V$ in the linear system $|3H+T|$ that
projects under $\pi$ to $C$.
\end{prop}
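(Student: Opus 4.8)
The plan is to produce $D$ explicitly from the cubic part of the conductor resolution (\ref{Eq C}), so that its class can be read off directly rather than computed through intersection numbers. Tensoring (\ref{Eq C}) by $\oo_{\PP^5}(6)$, the map $b$ takes the form
\[
(f,g)\colon A\otimes\oo_{\PP^5}(-3)\longrightarrow \Omega^2_{\PP^5}\oplus\oo_{\PP^5},
\]
where $f$ is the map (\ref{part-f}) with $\det f$ the equation of $S_A$, and the second component is a homomorphism $g\in\Hom(A\otimes\oo_{\PP^5}(-3),\oo_{\PP^5})=A^{\vee}\otimes H^0(\oo_{\PP^5}(3))$, that is, a form of bidegree $(1,3)$ on $\PP(A)\times\PP(W)$. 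I would let $D$ be the zero locus on $V$ of the section of $\oo_V(3H+T)$ cut out by $g$.

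Since $V=\PP(A)\cap O_2\subset\PP(A)$ and $\pi=\pi_1$, the pair $(\iota,\pi)$ embeds $V$ into $\PP(A)\times\PP(W)$, and under this embedding $\oo_{\PP(A)}(1)|_V=T$ and $\pi^{\ast}\oo_{\PP(W)}(1)=H$. Thus $g\in H^0\big(\oo_{\PP(A)}(1)\boxtimes\oo_{\PP(W)}(3)\big)$ restricts to a section $\sigma\in H^0(V,\oo_V(3H+T))$. Here $g\neq0$, for otherwise $\coker(f,g)=\coker f\oplus\oo_{\PP^5}$ would contain the nonzero torsion sheaf $\coker f$ supported on $S_A$, contradicting torsion-freeness of $\mathcal{I}_C(6)$; and $\sigma\not\equiv0$ on $V$, since the projection of its zero locus is the proper subscheme $C\subsetneq S_A$. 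Hence $D:=Z(\sigma)$ is an effective divisor in $|3H+T|$.

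It remains to identify $\pi(D)$ with $C$. Over a general point $v\in S_A$ the fibre of the birational morphism $\pi$ is the single point of $V$ corresponding to the tautological line $A\cap F_v$; by the symmetric structure of the resolution established in the proof of Theorem \ref{EPW} (the commuting square relating $f$, $f^{\ast}$ and $\rho$) this line equals $\ker f_v$, spanned by some $a_v\in A$. The section $\sigma$ vanishes at this point precisely when $g(a_v)$ vanishes at $v$, i.e.\ when the combined map $(f,g)$ drops rank at $v$; by (\ref{Eq C}) this rank-drop locus is exactly $C$. Consequently the unique three-dimensional component of $Z(\sigma)$ is the strict transform of $C$, and $\overline{\pi(D)}=C$.

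The step I expect to require the most care is the behaviour of $\sigma$ over $\sing S_A$. A priori $Z(\sigma)$ may carry further components inside the exceptional divisor $E$ of Remark \ref{alpha}; since $E$ maps into $\sing S_A$, which has dimension $2$, these cannot dominate the threefold $C$ and so do not enlarge $\overline{\pi(D)}$, but verifying that the strict transform of $C$ genuinely appears in $Z(\sigma)$---equivalently, that $\sigma$ does not vanish identically along the part of $V$ lying over the smooth locus of $S_A$---relies on the fibrewise identification above and on the description $F_v=\Omega^3_{\PP^5}(3)_v$ used in Proposition \ref{smoothV}. The value $[D]=3H+T$ is forced by the bidegree $(1,3)$ of $g$ and needs no further computation; as a consistency check, since $\pi|_D\colon D\to C$ is birational one must have $\deg C=(3H+T)\cdot H^3$, which together with $\deg C=36$ yields a single numerical relation in the intersection ring of $V$.
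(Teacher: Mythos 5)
Your construction is essentially the paper's own proof: the paper pulls $b^{\vee}$ back to $\mathbb{P}(10\mathcal{O}_{\mathbb{P}^5})=\PP^5\times\PP(A)$ via Kempf's method (Lazarsfeld, Appendix B), uses the $\Omega^2$-component of the resulting section to cut out $V$ and the remaining $\mathcal{O}_{(10\mathcal{O}_{\mathbb{P}^5})}(1)\otimes p^{\ast}\mathcal{O}_{\mathbb{P}^5}(3)$-component to cut out $D$, which is exactly your section $\sigma$ of bidegree $(1,3)$ coming from the $\mathcal{O}_{\PP^5}(-6)$-column of $b$, with the same identification $\pi(D)=\{v:\operatorname{rk}b_v\le 9\}=C$. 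The argument is correct and only differs from the paper's in that you make the degeneracy-locus formalism explicit fibrewise rather than citing it.
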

\begin{proof}

We shall show that $D$ is given by the vanishing of a section of the
vector bundle 
$10 \mathcal{O}_{\mathbb{P}(\Omega^3_{\mathbb{P}^5}(3))}(-1)\oplus
(\mathcal{O}_{\mathbb{P}(\Omega^3_{\mathbb{P}^5}(3))}(1)\otimes
\mathcal{O}_{\mathbb{P}^5}(3))$
on $\mathbb{P}(\Omega^3_{\mathbb{P}^5}(3))$.
 Recall that the sequence (\ref{Eq C}) defines a codimension~$1$ subscheme
$C\subset S_A$. Let us apply Kempf's idea and pull back $b^{\vee}$ (where $b$ is
defined by  (\ref{Eq C})) by $p\colon \mathbb{P}(10\mathcal{O}_{\mathbb{P}^5})\to \PP^5$.
Then as in \cite[Appendix B]{L} we obtain a diagram
\begin{equation}\label{diag3}
\xymatrix{p^{\ast}\Omega_{\mathbb{P}^5}^3(3)\oplus
p^{\ast}\mathcal{O}_{\mathbb{P}^5}(-3) \ar[r]^{\ \ \ \ \ \ \ \ \ \ p^{\ast}b^{\vee}} \ar[rd]^v&
p^{\ast} (10\mathcal{O}_{\mathbb{P}^5}) \ar[d]\\
& \mathcal{O}_{\mathbb{P}(10\mathcal{O}_{\mathbb{P}^5})}(1)}
\end{equation}

We see that the degeneracy locus of $b^{\vee}$ can be seen on
$\mathbb{P}(10\mathcal{O}_{\mathbb{P}^5})$ as the degeneracy of $v$, thus as 
the
zero section of
$$(\mathcal{O}_{(10\mathcal{O}_{\mathbb{P}^5})}(1)\otimes
p^{\ast}\mathcal{O}_{\mathbb{P}^5}(3))\oplus
(\mathcal{O}_{(10\mathcal{O}_{\mathbb{P}^5})}(1)\otimes p^{\ast}
\Omega_{\PP^5}^2(3)).$$ Finally, note that the zero scheme of the bundle
$\mathcal{O}_{(10\mathcal{O}_{\mathbb{P}^5})}(1)\otimes
p^{\ast}(\Omega_{\mathbb{P}^5}^2(3))$ defines $V$ set-theoretically, and the
 restrictions
 $\mathcal{O}_{(10\mathcal{O}_{\mathbb{P}^5})}(1)|_V$ and $\mathcal{O}_{\PP(\Omega^3_{
\mathbb{P}^5}(3))}(-1)|_V$ are equal.
\end{proof}

Finally, we shall translate  geometrical properties of the map $\varphi\colon
X\to X'\subset \PP^5$ into geometrical properties of the adjoint EPW sextic.
Let us also consider the subschemes $N_r\subset X'$ defined by 
$Fitt_{r}^{X'}(\varphi_{\ast}(\mathcal{O}_X))$ (for example
$N_1=C$). Recall that from the results of \cite[\S 4]{MP} the scheme
$N_2$ has a symmetric presentation matrix and is of codimension
$\leq3$ if it is non-empty. Moreover $N_2$ is supported on points
where $C$ is not a locally complete intersection (see
\cite[p.~131]{MP}).
Denote by $M_r$ the degeneracy 
locus of rank $\leq 10-r$ of the map 
\[ 
A \otimes
\mathcal{O}_{\mathbb{P}^5}(-3) \xrightarrow{f} \Omega^2_{\mathbb{P}^5}.
\]

\begin{lemm}\label{N3 5.1}
 The subschemes $N_2$ and $M_2$ of $\mathbb{P}^5$ are equal and the
radicals of the schemes $N_r$ and $M_r$ are equal for $r\geq 2$. Moreover, suppose that $p\in M_{k}-M_{k+1}$. Then for $k\geq 1$ the dimension of the intersection
$F_p\cap\mathbb{P}(A)$ is $k-1$.
\end{lemm}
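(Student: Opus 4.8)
The plan is to reinterpret both families of subschemes as corank loci of the maps built in the proof of Theorem \ref{EPW}, and then to compare these loci through the block structure of (\ref{Eq X}) and the symmetric structure of (\ref{diagr}). First I would make the degeneracy loci of $f$ explicit. By the description of the desingularization in Proposition \ref{smoothV}, after twisting by $\oo(3)$ the map $f$ is, over a point $p=[v]\in\PP^5$, the composite $A\hookrightarrow\bigwedge^3W\twoheadrightarrow\bigwedge^3W/\widehat F_v$, where $\widehat F_v=\{v\wedge\gamma\mid\gamma\in\bigwedge^2W\}$ is the $10$-dimensional fibre of $\Omega^3_{\PP^5}(3)$ (so $F_v=\PP(\widehat F_v)$) and $\bigwedge^3W/\widehat F_v$ is the fibre of $\Omega^2_{\PP^5}(3)$. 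Hence $\ker f_p=A\cap\widehat F_v$ and $\rk f_p=10-\dim(A\cap\widehat F_v)$, so $M_r=\{p\mid\dim(A\cap\widehat F_v)\ge r\}$. For $p\in M_k\smallsetminus M_{k+1}$ this gives $\dim(A\cap\widehat F_v)=k$, whence $F_p\cap\PP(A)=\PP(A\cap\widehat F_v)$ has dimension $k-1$; this is the final ``moreover'' assertion (for $k\ge1$, when the intersection is non-empty).

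Next I would identify $N_r$ with a corank locus. Since $\varphi_\ast(\oo_X)=\coker F$ for the square presentation (\ref{Eq X}), the fibre $\varphi_\ast(\oo_X)\otimes k(p)$ has dimension $\operatorname{corank}F_p$, so by the standard property of Fitting ideals $N_r=V(Fitt^{X'}_r(\varphi_\ast\oo_X))$ is, as a set, $\{p\mid\operatorname{corank}F_p\ge r+1\}$. As $\varphi$ is finite and birational onto the hypersurface $X'$, the sheaf $\varphi_\ast\oo_X$ is generically free of rank one on $X'$, so $\operatorname{corank}F_p=1$ at a general point of $X'$ and $N_1=\{\operatorname{corank}F_p\ge2\}=C$, as required. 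Now $f$ is the $10\times10$ block of $F$ obtained by deleting the $\oo$ target summand and the $\oo(-6)$ source summand; thus $\rk f_p\le\rk F_p$, giving $\operatorname{corank}f_p\ge\operatorname{corank}F_p-1$. This already yields $N_r\subseteq M_r$ for all $r$, and also $M_2\subseteq X'$, since $\dim(A\cap\widehat F_v)\ge2$ forces $\operatorname{corank}F_p\ge1$, i.e.\ $\det F_p=0$.

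It remains to prove the reverse inclusion up to radical for $r\ge2$ and the scheme equality $N_2=M_2$, and here lies the real work. The key local assertion is that the extra $\oo(\pm3)$ row and column raise the corank by exactly one along the conductor $C$ and leave it unchanged elsewhere on $X'$, i.e.\ $\operatorname{corank}F_p=\operatorname{corank}f_p+1$ for $p\in C$ (which is natural, since $C$ is exactly where $\varphi_\ast\oo_X$ acquires an extra generator over $\oo_{X'}$). Granting this on $M_2\subseteq C$, and using $M_r\subseteq M_2$, we get $\operatorname{corank}F_p\ge r+1$ on $M_r$ for $r\ge2$, hence $M_r\subseteq N_r$ and $\operatorname{rad}N_r=\operatorname{rad}M_r$. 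For the scheme equality I would argue via diagram (\ref{diagr}): $\varphi_\ast(\oo_X(3))$ has a symmetric $11\times11$ presentation whose essential $10\times10$ block is the symmetric presentation of the EPW sheaf $\F$, so both $N_2$ and $M_2$ are corank-$\ge2$ loci of symmetric matrices. I would then invoke Mond--Pellikaan \cite[\S4]{MP}: $N_2$ carries a symmetric presentation and is supported exactly where $C$ fails to be a local complete intersection, which by the determinantal structure of (\ref{Eq C}) is cut out scheme-theoretically by the corank-$\ge2$ condition on $f$, namely by $M_2$. The step I expect to be the main obstacle is precisely this scheme-theoretic identity $N_2=M_2$: one must show that the extra row and column contribute corank exactly one along $C$ and do not perturb the symmetric determinantal scheme attached to $f$. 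Such fine control is available only at the first interesting corank, which is why for $r\ge3$ only the equality of radicals is asserted, the schemes $N_r$ and $M_r$ possibly carrying different nilpotents over the common support $\{\dim(A\cap\widehat F_v)\ge r\}$.
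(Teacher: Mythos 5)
Your treatment of the final assertion is correct and in fact more direct than the paper's: you compute $\ker f_p=A\cap\widehat F_v$ from the fibrewise description of $f$ as $A\hookrightarrow\bigwedge^3W\twoheadrightarrow\bigwedge^3W/\widehat F_v$, whereas the paper routes the same fact through the fibre dimension of $V\cap\pi^{-1}(p)$ coming from Proposition \ref{ciC}, \cite[Lem.~2.8]{KU}, and the observation that $\alpha$ contracts no curves in $\pi^{-1}(p)$. Your framing of the first assertion is also sound: $N_r\subseteq M_r$ does follow from the submatrix inequality $\operatorname{corank}f_p\ge\operatorname{corank}F_p-1$, and everything hinges on a converse inequality along the conductor locus.

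But that converse is precisely the mathematical content of the lemma, and you do not prove it: you write ``granting this'' for the assertion $\operatorname{corank}F_p=\operatorname{corank}f_p+1$ on $C$, and you flag the scheme-theoretic identity $N_2=M_2$ as ``the main obstacle'' without closing it. (There is also a small circularity in your reduction: you apply the granted identity ``on $M_2\subseteq C$'', but $M_2\subseteq C$ is itself part of what must be shown --- the submatrix inequality only gives $M_2\subseteq X'$, since $\operatorname{corank}f_p\ge 2$ only forces $\operatorname{corank}F_p\ge 1$.) The paper closes this gap with the rank condition of Kleiman--Ulrich: using the freedom in the choice of the chain map $\psi$ in diagram (\ref{diagr}) (alternating homotopies as in \cite[p.~447]{EPW}), one arranges locally on an affine chart that $F\psi$ is a symmetric matrix with $\psi$ invertible; the submatrix $B$ of its last $9$ columns and the submatrix $B'$ of the last $9$ rows of $B$ have maximal degeneracy loci cutting out $C$ and $S_A$ locally, and \cite[Prop.~3.6(3)]{KU} --- applicable because the normalization $X$ of $X'$ is non-singular --- then yields the comparison of the Fitting loci of $F$ with the corank loci of $f$, scheme-theoretically for $r=2$ and up to radical for $r\ge2$. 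Your appeal to Mond--Pellikaan points at the right circle of ideas but is not a substitute for this symmetrization-plus-rank-condition argument.
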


\begin{proof} This is an analogous statement to the rank condition
(see \cite[Rem.~2.8]{CS}). We claim that locally the map $F$ can be seen as a
symmetric map.
Indeed, in the diagram (\ref{diagr}) using alternating homotopies as in
\cite[p.~447]{EPW} we have the freedom of choice of the map 
$\psi$.
In particular restricting to an affine neighborhood we can assume that the
matrix $A:=F \psi$ is symmetric and that $\psi$ is an isomorphism.
Note that the matrix $B$ consisting of the last $9$ columns of $A$ and the
matrix $B'$ which is the last $9$ rows of $B$ have maximal degeneracy loci
defining locally the scheme $C$ and the sextic $S_A$ respectively (see  (\ref{Eq C})). Since we know that $X'$ has a non-singular normalization,
we can conclude with \cite[Prop.~3.6(3)]{KU}. 

For the second part we use
\cite[Lem.~2.8]{KU}.
 It follows from the proof of Proposition \ref{ciC} that the dimension of the fiber $V\cap
\pi^{-1}(p)$ is equal to $k-1$.
We conclude by observing that the map $\alpha$ does not contract curves on
$\pi^{-1}(p)$.
\end{proof}


\subsection{The duality}\label{dual}

Since we have a second fibration $\pi_2$ of the variety $O_2$,
it is natural to consider the following picture:
 \begin{equation}\label{diag4'}
\xymatrix{  \mathbb{P}(W)& O_2\subset\mathbb{P}(\textstyle\bigwedge^3
W) \ar@{.>}[l]_{\pi_1} \ar@{.>}[r]^{\ \ \ \ \ \pi_2}&\mathbb{P}(W^{\vee}) \\
 \mathbb{P}(\Omega^3_{\mathbb{P}(W)}(3))\ar[u]^{\pi} \ar[ru]_{\alpha} &&\mathbb{P}(\Omega^3_{\mathbb{P}(W^{
\vee})}(3)) \ar[u]^{\pi'} \ar[lu]^{\alpha'}}
\end{equation}
Denote by $F_v'$ the closure of the fiber of $\pi_2$ and by $\pi_2(V')=S_A'\subset \mathbb{P}(W^{\vee})$
the corresponding EPW sextic constructed from~$A$.
Denote by $\mathcal{O}_{V'}(H_2):=\pi_2^{\ast}(\mathcal{O}_{\mathbb{P}(W^{\vee})}(1))$.
Without lost of generality we can denote $\mathcal{O}_{V'}(H):=\pi_1^{\ast}(\mathcal{O}_{\mathbb{P}(W)}(1))$ (we identify it with the divisor $\pi^{\ast}(\oo_{\PP(W)}(1))$ on $V$).
\begin{lemm}
 Assume that the sextic $S_A\subset\mathbb{P}(W)$ is integral. Then
$S_A'\subset\mathbb{P}(W^{\vee})$ is integral and dual to~$S_A$.
\end{lemm}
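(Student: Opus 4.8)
The plan is to realise the total map $(\pi_1,\pi_2)\colon V'\dashrightarrow \PP(W)\times\PP(W^\vee)$ as (the closure of) the conormal variety of $S_A$, and then to invoke the reflexivity (biduality) theorem, which is available since we work in characteristic zero. Write a general point of $V'$ as $[\eta]$ with $\eta=\alpha\wedge\omega$, $\alpha\in W$, $\omega\in\bigwedge^2W$. By the defining formulas $\pi_1([\eta])=[\alpha]$, while $\pi_2([\eta])=[\alpha\wedge\omega\wedge\omega]\in\PP(\bigwedge^5W)=\PP(W^\vee)$; as a hyperplane in $\PP(W)$ the latter is $\{[w]\mid w\wedge\alpha\wedge\omega\wedge\omega=0\}$. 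Since $\alpha\wedge\alpha\wedge\omega\wedge\omega=0$, the point $\pi_1([\eta])=[\alpha]$ always lies on the hyperplane $\pi_2([\eta])$, so $(\pi_1,\pi_2)(V')$ is contained in the incidence variety of $\PP(W)\times\PP(W^\vee)$. The heart of the argument is to upgrade this incidence to a tangency.

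Thus the key step is to show that for $[\alpha]$ a smooth point of $S_A$ the hyperplane $\pi_2([\eta])$ is the embedded projective tangent space $T_{[\alpha]}S_A$. Recall that $S_A$ is the degeneracy locus of the bundle map $\phi\colon \Omega^3_{\PP(W)}(3)\to A^\vee\otimes\oo$ whose fibre over $[\alpha]$ is the composite $F_\alpha\hookrightarrow\bigwedge^3W\twoheadrightarrow\bigwedge^3W/A$, with $F_\alpha=\alpha\wedge\bigwedge^2W$. Over a general point of $S_A$ the kernel is the line $F_\alpha\cap A=\langle\eta\rangle$, which is exactly the point of $V'$ above $[\alpha]$; and, using $F_\alpha^{\perp}=F_\alpha$ and $A^{\perp}=A$ for the wedge form $\bigwedge^3W\times\bigwedge^3W\to\bigwedge^6W=\mathbb{C}$, the cokernel $\bigwedge^3W/(F_\alpha+A)$ is canonically dual to $\langle\eta\rangle$ via the pairing $x\mapsto \eta\wedge x$. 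Extending $\eta$ to the local section $\alpha(t)\wedge\omega$ of the subbundle, the differential of the determinantal equation of $S_A$ in a direction $\dot\alpha\in W$ is computed to be
\[
\dot\alpha\longmapsto \eta\wedge(\dot\alpha\wedge\omega)=\pm\,\dot\alpha\wedge\alpha\wedge\omega\wedge\omega .
\]
Hence $T_{[\alpha]}S_A=\{\dot\alpha\mid\dot\alpha\wedge\alpha\wedge\omega\wedge\omega=0\}$, which is precisely the hyperplane $\pi_2([\eta])$.

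With the tangency in hand the proof finishes as follows. The inclusion $(\pi_1,\pi_2)(V')\subseteq \mathrm{Con}(S_A)$ into the conormal variety now holds. Since $V'$ is an irreducible fourfold (a smooth Calabi--Yau fourfold by Proposition \ref{smoothV}, because $\Theta_A=\emptyset$), since $\pi_1$ is generically one-to-one onto the hypersurface $S_A$ (the degeneracy having corank one over a general point), and since $\mathrm{Con}(S_A)$ is itself irreducible of dimension four, the inclusion is an equality. Projecting to the second factor gives $S_A'=\pi_2(V')=\mathrm{pr}_2(\mathrm{Con}(S_A))=(S_A)^{\vee}$, the projective dual variety; the reflexivity theorem then yields $(S_A')^{\vee}=S_A$, so $S_A'$ and $S_A$ are dual. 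Finally $S_A'$ is integral: being the projective dual of the integral variety $S_A$ it is irreducible, and by the symmetry of the construction under $W\leftrightarrow W^\vee$ (the identification $\bigwedge^3W\cong\bigwedge^3W^\vee$ carries the Lagrangian $A$ to $A^{\perp}=A$) it is an EPW sextic, i.e.\ a degree-six hypersurface; an irreducible reduced hypersurface is integral.

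The main obstacle is the tangency computation of the second paragraph: one must identify the cokernel of $\phi$ with the dual of its kernel through the symplectic wedge form and verify that the gradient of the determinant is the wedge functional $\alpha\wedge\omega\wedge\omega$, the Lagrangian identities $F_\alpha^{\perp}=F_\alpha$ and $A^{\perp}=A$ being exactly what makes this work. A secondary point requiring care is that $\pi_1$ and $\pi_2$ are genuinely birational rather than merely dominant; this rests on the corank-one nature of the degeneracy over a general point, and hence on the smoothness of $V'$ guaranteed by the hypothesis $\Theta_A=\emptyset$.
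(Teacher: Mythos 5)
Your proof is correct and follows essentially the same route as the paper: both arguments reduce the duality statement to the identification of the hyperplane $\pi_2([\alpha\wedge\omega])=[\alpha\wedge\omega\wedge\omega]$ with the embedded tangent hyperplane to $S_A$ at the smooth point $\pi_1([\alpha\wedge\omega])$. The only difference is that the paper obtains this tangent-space description by citing O'Grady (Cor.~1.5(2) of \emph{EPW-sextics: taxonomy}), whereas you rederive it directly from the Lagrangian/determinantal structure and then package the conclusion via the conormal variety and reflexivity.
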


\begin{proof}
It follows from the definition of $\pi_1$ and $\pi_2$ that $\pi_2(F_v)$ is a
hyperplane in $\mathbb{P}(W^{\vee})$ that is dual to $v\in\mathbb{P}(W)$.
Next it follows from the description in \cite[Cor.~1.5(2)]{O2} of the tangent
space $T$ to $S_A$ at a smooth point that there is a point $w\in S'_A$ such that
$\pi_1(F_w')=T$.
\end{proof}
\begin{rem}\label{F_v}
As remarked by O'Grady \cite[\S 1.3]{O2}, the map $\pi_2|_{F_v}$ is given by
the linear system of Pl\"{u}cker quadrics defining  $F_v\cap
G(3,W)=G(2,5)\subset\mathbb{P}^9$.
Thus the fibers of $\pi_2|_{F_v}$ are $5$-dimensional linear spaces spanned by
$G(2,4)\subset G(2,5)\subset \mathbb{P}^9$.
\end{rem}

\subsection{The proof of (\ref{general case})}\label{proof}
The aim of this section is to prove that an EPW
sextic $S_{A}$ constructed by choosing $\mathbb{P}(A)$ disjoint from $G(3,W)$ (i.e.~with $\Theta_A=\emptyset$) cannot be the adjoint hypersurface of a birational
image of an IHS manifold with $b_2=23$.

For contradiction, suppose that  $S_A$ can be such a hypersurface.
Then for the corresponding Lagrangian space $A$ with $\Theta_A=\emptyset$ the variety  
$V'=O_2\cap\mathbb{P}(A)$ is isomorphic to $V=\alpha^{-1}(V')$.
Thus let us identify $V=V'$. 

From \cite[Claim 3.7]{O3} we deduce that there are only a finite number of
planes on $V$ contracted to points by $\pi_1$ and there are no higher dimensional
contracted linear spaces.
Denote by $E$ and $E_2$ the exceptional loci of $\pi_1|_{V}$ and $\pi_2|_{V}$
respectively, by $T$ the restriction of the hyperplane in $\mathbb{P}(\bigwedge^3
W)$, and by $H$ and $H_2$ the pull-backs by $\pi_1$ and $\pi_2$ respectively of
the hyperplane sections in $\PP(W)$ and $\PP(W^{\vee})$.
By \cite[Prop.~1.9]{O2} the singular locus of $S_A$ is a surface
$G$ of degree $40$ that is smooth outside  the image of the contracted planes.
Moreover, $S_A$ has ODP singularities along the smooth locus of $G$. Hence $E$
and $E_2$ are reduced.

Using Proposition \ref{Pic(W_2)} and the Lefschetz theorem \cite[Thm.~1]{RS}, which works when $V$ is smooth and omits the singular locus of $O_2$, we deduce that the
Picard group of $V$ has rank $2$ and is generated by the restrictions of $H$
and $H_2$.

\begin{lemm}\label{W_2}
In $CH^1(V)=\Pic(V)$ we have the  equalities  
$$H_2=5H-E\ \ \text{and}\ \  H+H_2=2T.
$$
\end{lemm}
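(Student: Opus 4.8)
The plan is to treat the two equalities separately, obtaining the first almost immediately from the bundle picture and reducing the second to the fact that $\pi_2$ resolves the Gauss (polar) map of the sextic $S_A$.

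For $H+H_2=2T$ I would invoke Remark~\ref{alpha}: on the ambient bundle $\PP(\Omega^3_{\PP^5}(3))$ the class $(\alpha\circ\pi_2)^{\ast}(H_2)$ lies in the linear system $|2T-H|$, whose base locus is exactly the $\alpha$-exceptional divisor lying over $G(3,W)$. Since we are in the case $\Theta_A=\emptyset$, by definition of (\ref{2.2}) we have $\PP(A)\cap G(3,W)=\emptyset$, so the smooth fourfold $V=\alpha^{-1}(V')$ is disjoint from this base locus (this is precisely what gave $V'\cong V$ in Proposition~\ref{smoothV}). Restricting $|2T-H|$ to $V$ is therefore base-point-free and coincides with $\pi_2^{\ast}\mathcal{O}_{\PP(W^{\vee})}(1)$, whence $H_2=(2T-H)|_V$, i.e.\ $H+H_2=2T$ in $\Pic(V)$.

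For $H_2=5H-E$ the key observation is that $\pi_2\colon V\to S_A'\subset\PP(W^{\vee})$ is a resolution of the Gauss map of $S_A$. Indeed, by the duality established in Section~\ref{dual} and \cite[Cor.~1.5(2)]{O2}, $\pi_2$ sends a general point of $V$ to the tangent hyperplane of $S_A$ at the corresponding smooth point of $S_A=\pi_1(V)$; thus $\pi_2$ is the polar map $[\partial_0 F:\dots:\partial_5 F]$ of the sextic equation $F$. Since the partial derivatives have degree $6-1=5$, away from the indeterminacy locus one has $\pi_2^{\ast}\mathcal{O}(1)=\pi_1^{\ast}\mathcal{O}(5)$, i.e.\ $H_2=5H$ on $V\setminus E$. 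The correction term is the $\pi_1$-exceptional divisor $E$: the base locus of the polar system is $\sing(S_A)=G$, the degree-$40$ surface, and by \cite[Prop.~1.9]{O2} the singularities of $S_A$ along the smooth part of $G$ are ordinary double points, so the partials vanish there to order exactly one. Hence passing to the resolution $\pi_1$ subtracts $E$ with multiplicity $1$, giving $H_2=5H-E$.

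The delicate point, on which I would spend the most care, is the multiplicity $1$ with which $E$ enters, together with the behaviour over the finitely many singular points of $G$ (the images of the planes contracted by $\pi_1$). I would pin down the coefficient by an intersection check on a general contracted line: let $\ell\subset V$ be a fibre of $\pi_1|_V$ over a smooth point of $G$, which by Lemma~\ref{N3 5.1} is the line $F_v\cap\PP(A)\subset F_v=\PP^9$. Then $H\cdot\ell=0$; by Remark~\ref{F_v} the map $\pi_2|_{F_v}$ is given by Pl\"ucker quadrics, so $H_2|_{F_v}=2\,\mathcal{O}_{F_v}(1)$ and $H_2\cdot\ell=2$; and $E\cdot\ell=-2$, since $\ell$ is the exceptional $\PP^1$ of the transverse $A_1$-resolution. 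These values are exactly consistent with $E=5H-H_2$, confirming both the coefficient $5$ and the multiplicity $1$. Finally, because $\Pic(V)$ is torsion-free of rank $2$, a divisor class is determined by its restriction to the complement of the finitely many bad points, so the relation obtained over the smooth locus of $G$ propagates to all of $V$.
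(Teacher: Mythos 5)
Your proof is correct and follows essentially the same route as the paper, whose entire argument is to cite Corollary \ref{dol} (i.e.\ the identification of $\pi_2$ with the resolved polar map of the sextic, given by the quintic partials vanishing along $G$) for $H_2=5H-E$, and Lemma \ref{H_1+H_2} (which underlies the Remark \ref{alpha} you invoke) for $H+H_2=2T$. The extra intersection-number check on a $\pi_1$-contracted line is a sound additional confirmation of the coefficient of $E$, but it is not needed beyond what the cited statements already give.
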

\begin{proof}
The first equality  follows from \cite[\S 1.2.2]{D} (see Corrolary \ref{dol}) and the second from Lemma
\ref{H_1+H_2}.
 \end{proof}
 
Now, using Proposition \ref{ciC} we find a divisor $D\subset V$ in the linear
system $|3H+T|$ such that $p(D)=C$.
It follows from Lemma \ref{N3 5.1} that $D-E$ is an effective divisor $D_1$.
 Let $l\subset V$ be a line contracted by $\pi_2$ (such lines cover $E_2$).
Since $l.T=1$, from 
Lemma \ref{W_2} we obtain $l.H=2$. It follows
that
$$l.(D-E)=l.(3H+T-E)=l.(T+H_2-2H)=-3$$ in $CH^1(V).$
Since $D_1$ is effective, we infer that $l\subset D_1$, thus $E_2\subset D_1$ and
$D_1-E_2$ is effective ($E_2$ is reduced).
We obtain the following equalities  in $CH^1(V)$:
 $$
D_1-E_2=T-4H_2-H=-3H_2-T.
$$
This is a contradiction because $-3H_2-T$ cannot be effective.


 \section{On the O'Grady conjecture}\label{sec-Ogrady}
 The aim of this section is to apply the results from the previous sections to prove some special cases of  Conjecture \ref{OGcon} of O'Grady.
 In fact, we shall generalize the results of Proposition \ref{general case} to a special class of IHS fourfolds
with $b_2=23$ satisfying an additional condition \textbf{O} described in the next subsection.

\subsection{IHS fourfolds with $b_2=23$ satisfying condition \textbf{O}}
Let  $(X,H)$ be a polarized IHS fourfold with $b_2=23$ such that $H^4=12$. Consider the following definition:

\begin{defi}
We say that $(X,H)$ satisfies \textit{condition} \textbf{O} if for all $D_1,D_2,D_3\in |H|$ that are independent, the intersection $D_1\cap D_2\cap D_3$ is a curve.
\end{defi}

Intuitively,  condition \textbf{O} says that the image $\varphi_{|H|}(X)\subset \PP^5$ does not contain planes.
Note that this is one of the conditions from \cite[Claim 4.4]{O6}. Moreover, each
IHS manifold numerically equivalent to  $Hilb^2(S)$, where $S$ is a $K3$ surface, can be deformed
to one that satisfies condition \textbf{O}. Motivated by this we can state the following:

\begin{prob}
Is each IHS fourfold with $b_2=23$ deformation equivalent to a polarized IHS fourfold $(X_0,H_0)$ satisfying condition \textbf{O} such that $H_0^4=12$?
\end{prob}

Note that if we find such a deformation, we can repeat the arguments from \cite{O} in order to show
 that  either $\varphi_{|H_0| }$ is the double cover of an EPW sextic (thus $X_0$ is of type $K3^{[2]}$)
 or $X_0$ is birational to a hypersurface of degree $12\geq d\geq 7$, or a $4:1$ morphism to a cubic hypersurface with isolated singularities, or a $3$-to-$1$ morphism to a normal quartic hypersurface, or $\dim \varphi_{|H_0|}(X_0)\leq 3$. It is a natural geometric problem to decide which one of the above cases can occur.

\subsection{The O'Grady conjecture}\label{secOG}
In this section we discuss the following conjecture of O'Grady:

\begin{conj}\label{OGcon} If an IHS fourfold $X$ is numerically
equivalent to  $Hilb^2(S)$ where $S$ is a $K3$ surface (i.e.~$c=3$ and $(H^2(X,\mathbb{Z}),q)$ is
isometric to $U^3\oplus E_8^2\oplus \langle -2\rangle$ with the standard
notation), then it is deformation equivalent to it.
\end{conj}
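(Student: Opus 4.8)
The plan is to combine O'Grady's reduction with the results of this paper so as to eliminate, one by one, every possibility for $\varphi_{|H_0|}$ except the double cover of an EPW sextic. First I would invoke the structure theorem of \cite{O}: an IHS fourfold $X$ numerically equivalent to $Hilb^2(S)$ automatically has $b_2=23$, and $X$ is either already of type $K3^{[2]}$ — in which case we are done — or it is deformation equivalent to a polarized $(X_0,H_0)$ with $H_0^4=12$ for which $\varphi_{|H_0|}$ is a birational map onto a hypersurface of degree $6\le d\le 12$. Since deformation equivalence is constant along such a family, it suffices to show that this birational alternative can never actually occur. By \cite{K} one already knows $d\ge 9$ and that $|H_0|$ has at most three isolated base points, so the single configuration left to exclude is the one isolated in \cite[Claim 4.9]{O6} and treated here: that in which $\varphi_{|H_0|}$ is a birational \emph{morphism} onto a degree-$12$ hypersurface $X'\subset\PP^5$.

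For that configuration the machinery of Sections \ref{section-gen}--\ref{proof} applies directly. Theorem \ref{EPW} produces the adjoint EPW sextic $S_A$ containing the conductor scheme $C$, and Proposition \ref{general case} shows that $S_A$ must be special, i.e. $\Theta_A\neq\emptyset$. The strategy to finish is to upgrade this to an outright contradiction for \emph{every} special $A$, by re-running the effectivity argument of Section \ref{proof}: one constructs the divisor $D\in|3H+T|$ of Proposition \ref{ciC} projecting to $C$, subtracts the exceptional loci using the Picard relations $H_2=5H-E$ and $H+H_2=2T$ of Lemma \ref{W_2}, and intersects with a line contracted by $\pi_2$ to arrive at the impossible conclusion that $-3H_2-T$ is effective. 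The aim is to show that this obstruction persists even when $\Theta_A$ is nonempty, using the degeneracy stratification $M_r$ of $f$ from Lemma \ref{N3 5.1} to relate the contracted planes on $V$ to the fibers $F_p\cap\PP(A)$ and to condition \textbf{O}, which forbids planes in the image.

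The main obstacle is precisely this upgrade, and it is delicate for two linked reasons. First, Proposition \ref{smoothV} guarantees that the desingularization $V$ is smooth \emph{only} when $\Theta_A=\emptyset$; in the relevant case $\Theta_A\neq\emptyset$ the variety $V$ is singular, so the Picard-rank-two statement and Lemma \ref{W_2} — which were derived via the Lefschetz theorem \cite[Thm.~1]{RS} on a smooth $V$ — no longer apply as stated, and one must instead resolve $V$ further and track how $H$, $H_2$, $T$, $E$ and $E_2$ transform before the sign obstruction can be recovered. Second, as noted after Proposition \ref{general case}, for a fixed special $S_A$ there is at least a one-parameter family of polarized IHS fourfolds inducing it, so a naive dimension or uniqueness count cannot close the argument. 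Finally, passing from this birational-morphism subcase to the full conjecture requires two further inputs beyond the present paper: the deformation of every $X$ to one satisfying condition \textbf{O} (the open Problem of Section \ref{sec-Ogrady}), and then the exclusion of the remaining finite-cover alternatives — the $4{:}1$ cover of a cubic, the $3{:}1$ cover of a quartic, and the case $\dim\varphi_{|H_0|}(X_0)\le 3$. Each of these demands its own conductor computation in the style of Section \ref{section-gen}, since for a degree-$d$ image the dualizing sheaf is $\omega_{X'}=\oo_{X'}(d-6)$ and the identity $\varphi^{!}\omega_{X'}=\omega_X$ must be re-exploited, and the symmetric-resolution arguments of \cite{EPW} and \cite{CS} must be carried out afresh in every case.
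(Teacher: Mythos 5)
The statement you are asked to prove is labelled \emph{Conjecture} in the paper, and the paper does not prove it: it only establishes partial results in its direction (Theorem \ref{EPW}, Proposition \ref{general case}, and Proposition \ref{og-conj}). Your text is, by its own admission, a plan rather than a proof, and the gaps you flag are exactly the ones the paper leaves open. Concretely: (i) the ``upgrade'' of the effectivity argument of Section \ref{proof} to all $A$ with $\Theta_A\neq\emptyset$ is not achieved in the paper either --- Proposition \ref{og-conj} only narrows the possibilities down to $\dim\Theta_A=1$, the double determinantal cubic, or a sextic with a non-reduced linear component, and the paper states explicitly that ``a new idea is needed to conclude in those cases'' and that the case $\dim\Theta_A=1$ is only excluded for generic such $A$; (ii) the deformation of an arbitrary numerical $(K3)^{[2]}$ to a polarized pair satisfying condition \textbf{O} with $H_0^4=12$ is posed as an open Problem in Section \ref{sec-Ogrady}, not proved; (iii) the alternatives with $\deg\varphi_{|H_0|}>1$ other than the EPW double cover (the $4{:}1$ cover of a cubic, the $3{:}1$ cover of a quartic, $\dim\varphi_{|H_0|}(X_0)\le 3$) and the birational cases $9\le d\le 11$ with base points are not treated here at all. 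So while your outline of the reduction via \cite{O} and \cite{K} and your identification of the obstacles are accurate and consistent with the paper, there is no proof to compare against, and your proposal does not supply one: each of the three itemized gaps is a genuine missing step, not a technicality.

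A smaller but substantive point: your proposed mechanism for the upgrade --- re-running the computation $l.(D-E)=-3$ and concluding that $-3H_2-T$ would have to be effective --- already fails structurally when $\Theta_A\neq\emptyset$, because (as you note) $V$ is then singular along the loci over the curves $C_{U,A}$, the planes $\alpha^{-1}(U)$ contribute extra exceptional components, and the Picard relations of Lemma \ref{W_2} are only established for the smooth $V$. The paper's Section \ref{Theta0} replaces that computation by a different one (restricting $|3T-3H|$ to the planes $\alpha^{-1}(U)$ and invoking Proposition \ref{propC_(U,A)}), and even that only disposes of $\dim\Theta_A=0$. You would need to present an actual argument for the surviving cases, not a statement that the obstruction ``should persist.''
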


Let $X$ be an IHS manifold numerically equivalent to
$S^{[2]}$ where $S$ is a $K3$ surface. Consider $\mathcal{M}_X'$, a
connected component of the moduli space of marked IHS
fourfolds deformation equivalent to $X$, and the surjective period
map 
$$P\colon\mathcal{M}_X'\to \Omega_L.
$$ 
Then choose an
appropriate $\rho\in \Omega_L$ such that $P^{-1}(\rho)$ is an
IHS manifold $X$ deformation equivalent to $X_0$ and
$\Pic(X_0)=\mathbb{Z}H_0$ where $H_0$ is an ample divisor with $H_0^4=12$. The
special choice of $\rho$ requires $X_0$ to satisfy condition \textbf{O} and additional conditions that are
described in
\cite[Claim 4.4]{O6}. For such $(X_0,H_0)$ O'Grady proved that the linear
system $|H_0|$ gives a map $\varphi_{|H_0|}$ of degree $\leq 2$ that is either
birational onto its image or a special double cover of an EPW sextic.
Since this double cover is deformation equivalent  to $Hilb^2(S)$ where $S$ is a $K3$ surface,
his conjecture follows if we prove that $\deg \varphi_{|H_0|} \neq 1$. 

If we suppose
that $\deg\varphi_{|H_0|}=1$ (i.e $\varphi_{|H_0|}$ is a birational map) then O'Grady remarked that the image of
$\varphi_{|H_0|}$ is a hypersurface of degree $6\leq d\leq 12$. In \cite{K}
we showed that $d\geq 9$ and  $|H_0|$ has at most three isolated base
points. Note that if $|H_0|$ has one isolated point, the scheme defined by the ideal of the conductor
of $\varphi_{|H_0|}$ is contained in a unique quintic (containing the singular locus of $\varphi_{|H_0|}$). 
There is a lot of geometry appearing as discussed in \cite{Gr}.

In this work we consider the case $d=12$ (i.e.~$|H_0|$ has no base points); this is the case where the
method of \cite{K} does not work and also the most difficult one from the point of
view of
O'Grady (see \cite[Claim 4.9]{O6}). Then the image of $\varphi_{|H_0|}$ is a non-normal
degree $12$ hypersurface
$\varphi_{|H_0|}(X')\subset \mathbb{P}(W)$. 
Our idea is to study the adjoint
hypersurface $S_A$ to $X'\subset \mathbb{P}(W)$. We know that it is an EPW sextic so we can use the classification of such sextics given in \cite{O2}, \cite{O3},
\cite{O4} and \cite{IM} in order to describe $S_A$ more precisely. 
Recall that for $S_A$ the set $\Theta_A$ (defined in (\ref{2.2})) is empty for a generic choice of $A$ and if $\Theta_A\neq \emptyset$ it measures how
singular the EPW sextic is. For special $A$ all the values $0\leq \dim
\Theta_A\leq 6$ can be obtained.

Recall again that each numerical $(K3)^{[2]}$ can be deformed to a polarized IHS fourfold $(X_0,H_0)$ that satisfies condition \textbf{O}. Our main result of this section is  the following:
\begin{prop}\label{og-conj}
Suppose that a hypersurface $X'\subset \mathbb{P}^5$ of degree $12$ is the
birational image of a polarized IHS manifold $(X,H)$ with $b_2=23$  such that $H^4=12$ satisfying \textbf{O} through a morphism given by the complete linear system $|H|$. Let
$S_A\subset \mathbb{P}^5$ be the adjoint EPW sextic to the image $X'\subset \PP^5$. Then 
for $S_A$ we have 
either $\dim \Theta_A=1$, or $S_A$ is the double determinantal
cubic, or $S_A$ has a non-reduced linear component.
\end{prop}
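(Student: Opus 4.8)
The plan is to extend the desingularization-and-duality argument of Section~\ref{proof} past the generic situation, organising the possibilities by the dimension of $\Theta_A=\mathbb{P}(A)\cap G(3,W)$, which is nonempty by Proposition~\ref{general case}. If the adjoint sextic $S_A$ is not integral it is reducible or non-reduced, and the classification of degenerate EPW sextics in \cite{O2},~\cite{O3},~\cite{O4} and~\cite{IM} leaves only the two possibilities of the statement, the double determinantal cubic and the sextics with a non-reduced linear component. So I would assume $S_A$ integral and aim to prove $\dim\Theta_A=1$. The engine of the analysis is a dimension count on the resolution $\alpha\colon\mathbb{P}(\Omega^3_{\mathbb{P}^5}(3))\to O_2$ of Section~\ref{12}: by Remark~\ref{alpha} the fibre of $\alpha$ over a point of $G(3,W)$ is a plane, so $\alpha^{-1}(\Theta_A)$ has dimension $\dim\Theta_A+2$ inside the fourfold $V=\alpha^{-1}(\mathbb{P}(A)\cap O_2)$.

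This count first bounds $\dim\Theta_A$. If $\dim\Theta_A\geq 2$ then $\alpha^{-1}(\Theta_A)$ has dimension at least~$4$, so it fills a component of $V$ and its image $\pi_1(\alpha^{-1}(\Theta_A))=\bigcup_{U\in\Theta_A}\mathbb{P}(U)$ exhibits $S_A$ as swept out by a positive-dimensional family of planes---forcing the degenerate shape already excluded by integrality (the relevant extreme being $A\supset\bigwedge^3W_5$ for a hyperplane $W_5\subset W$, where $\Theta_A=G(3,W_5)$ has dimension~$6$). Hence for integral $S_A$ we must have $\dim\Theta_A\leq 1$.

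It then remains to exclude $\dim\Theta_A=0$. Here $\Theta_A$ is finite and $\alpha^{-1}(\Theta_A)$ is a finite union of planes of codimension~$2$, contributing nothing to the divisorial relations. Condition~\textbf{O} together with \cite[Claim~3.7]{O3} again bounds the planes of $V$ contracted to points by $\pi_1$, so $V$ is smooth with $\Pic(V)$ of rank~$2$ (Proposition~\ref{Pic(W_2)} and \cite[Thm.~1]{RS}), the relations $H_2=5H-E$ and $H+H_2=2T$ of Lemma~\ref{W_2} persist, and the divisor $D\in|3H+T|$ projecting to $C$ of Proposition~\ref{ciC} exists. The computation of Section~\ref{proof} then runs verbatim: a line $l$ contracted by $\pi_2$ gives $l.(D-E)=-3$, whence $E_2\subseteq D_1:=D-E$ and $D_1-E_2=-3H_2-T$ would be effective, which is impossible. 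Thus $\dim\Theta_A\neq 0$, and the only surviving value is $\dim\Theta_A=1$.

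The delicate point is to confirm that $\dim\Theta_A=1$ genuinely survives, i.e.\ that the contradiction above must \emph{break down} there. In this regime $\alpha^{-1}(\Theta_A)$ is a divisor---a plane bundle over the curve $\Theta_A$---so it enters $\Pic(V)$ and corrects the relation $H_2=5H-E$ by the class of this new exceptional component, which is exactly what stops the sign computation from forcing an anti-effective class to be effective. I expect the main work to lie here, and in matching the count for $\dim\Theta_A\geq 2$ with the precise list of \cite{IM}; the orbit geometry of the Appendix is the tool that controls how $\mathbb{P}(A)$ can meet $G(3,W)$ in each regime.
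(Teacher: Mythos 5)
Your organizing principle -- stratify by $\dim\Theta_A$ and try to push the argument of Section~\ref{proof} into each stratum -- is the right one, but both of the cases you need to close have genuine gaps.

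For $\dim\Theta_A\geq 2$ you replace the required analysis by the assertion that non-integral degenerate EPW sextics are automatically one of the two exceptional shapes in the statement, citing the classification literature. That is false as stated: the Morin theorem gives five types of positive-dimensional families of pairwise incident planes, and several of them produce non-integral sextics that are \emph{neither} the double determinantal cubic \emph{nor} have a non-reduced linear component -- for instance $2\G+R$ with $\G$ a smooth quadric (the hyperplane section of the double Veronese $\PP^3$), or the double cubic cone arising from a quintic del Pezzo inside $G(2,5)\subset F_v$. These must be actively excluded, and the tool that excludes them is not integrality but condition \textbf{O} combined with the fact that every $\C_{U,A}$ lies in $X'$ (Proposition~\ref{propC_(U,A)}) and supports a curve of degree $\leq 3$ (Lemma~\ref{cubic}): one shows in each Morin case that the curves $\C_{U,A}$ would sweep out a plane contained in $X'$. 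Your dimension count $\dim\alpha^{-1}(\Theta_A)=\dim\Theta_A+2$ alone cannot see any of this.

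For $\dim\Theta_A=0$ the computation of Section~\ref{proof} does \emph{not} run verbatim. Once $\Theta_A\neq\emptyset$, the variety $V$ is only normal rather than smooth, new exceptional components of $\pi$ appear over the curves $\C_{U,A}\subset\PP(U)$ -- either one-parameter families of planes or $\PP^3$'s contracted to points -- and these are Weil divisors on the fourfold $V$, so the rank-two Picard group and the relation $H_2=5H-E$ do not persist unchanged. Ruling out the divisorial families of planes is the content of Lemma~\ref{noEx1}, a delicate argument via the dual sextic $S_A'$ and Proposition~\ref{dual-tangent}, and it is the hardest single step of the proof; nothing in your proposal substitutes for it. Even after that, the contradiction is obtained not from the line computation $l.(D-E)=-3$ but by writing $\pi^{\ast}(Q')=E_G+\sum a_iE_i+B$ for a quintic partial derivative of $s_A$, deducing that $D-(E_G+\sum a_iE_i)$ lies in $|3T-3H|$, and observing that this class restricts to $\mathcal{O}(-3)$ on each plane $\alpha^{-1}(U)$, $U\in\Theta_A$, forcing $\PP(U)\subset X'$ against condition \textbf{O}. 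Your closing intuition that the case $\dim\Theta_A=1$ survives because $\alpha^{-1}(\Theta_A)$ becomes a divisor is reasonable, but no argument is needed there: the statement simply permits that case.
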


The idea of the proof is as follows:
We separately treat the cases when $\dim \Theta_A=0$, $\dim \Theta_A\geq 2$ and $\dim \Theta_A=1$
(in Sections \ref{Theta0}, \ref{Theta2} and \ref{Theta1} respectively).
The case $\dim \Theta_A=0$ is similar to $ \Theta_A=\emptyset$.
In the other cases for each point $U\in \Theta_A$ we consider the plane $\mathbb{P}(U)\subset\mathbb{P}^5$
 contained in $S_A$ such that $S_A$ is singular along $\PP(U)$.
Then we consider after O'Grady (see \cite{O4}) the sets $\C_{U,A}\subset
\mathbb{P}(U)$ defined in (\ref{CU}) below. Each $\C_{U,A}\subset
\mathbb{P}(U)$ is either the whole plane or  the support of some sextic curve $C_{U,A}$.
We show that $\C_{U,A}$ has to be contained in $X'$, thus cannot be a plane (by
 condition \textbf{O}). We also show that  $\C_{U,A}$  must have
degree $\leq 3$ (see Lemma \ref{cubic}).
Checking case by case we exclude all the possibilities with $\dim \Theta_A\geq
2$ except when either $S_A$ is the double determinantal cubic and $X'$ has
generically tacnodes along  $S_A\cap X'$, or $S_A$ is reducible
and equal to $2H_0+Q$ where $H_0$ is a hyperplane and $Q$ a quartic such that
$H_0\cap Q$
supports the scheme $C$ defined by the conductor. In particular in the second case $X'$ has
triple points along $C$ that are not ordinary triple points (see the end of
Section \ref{Theta2} for a precise description). A new idea is needed to
conclude in those cases.

We believe that by the methods of this paper we can also exclude the case $\dim
\Theta_A=1$, but the problem becomes more technical and we only show that the Lagrangian subspace $A\subset \bigwedge^3 W$ defining $S_A$ cannot be generic in the set
of Lagrangian $A$ with $\dim \Theta_A=1$ (see Section
\ref{Theta1}). 
Before proving Proposition \ref{og-conj}, we first introduce some technical results.
\subsection{Preliminary results}

For $U\in G(3,W)$ we see that $\pi(\alpha^{-1}(U))= \mathbb{P}(U)\subset
\mathbb{P}(W)$ is the corresponding plane contained in $S_A$.
Let us consider after O'Grady the set 
\begin{equation}\label{CU}
\mathcal{C}_{U,A}:=\{[v]\in
\mathbb{P}(U)\mid \dim(F_v\cap \mathbb{P}(A))\geq 1 \},
\end{equation}
where $F_v$ is the linear space being the closure of the fiber of the map $\pi_1\colon O_2\dasharrow \PP(W)$ at the point $[v]$. 
There is a natural scheme structure $C_{U,A}$ on $\mathcal{C}_{U,A}$ described in \cite[\S
3.1]{O4} such that $C_{U,A}$ is either a sextic curve or the whole plane
$\mathbb{P}(U)$.

\begin{prop}\label{propC_(U,A)}
The set $\C_{U,A}$ is contained in $ X'\subset \mathbb{P}(W)$. In particular
$\C_{U,A}$ is never equal to $\mathbb{P}(U)$
if $(X,H)$ satisfies condition \textbf{O}.
\end{prop}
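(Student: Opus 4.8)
The plan is to recognise the set $\C_{U,A}$ as the trace on the plane $\PP(U)$ of a degeneracy locus that has already been shown to sit inside $X'$, and then to read off the ``in particular'' clause directly from the geometric content of condition \textbf{O}. The entire argument is built on Lemma \ref{N3 5.1}, which converts the linear incidence condition $\dim(F_v\cap\PP(A))\geq 1$ into membership in the rank locus $M_2$ of the map $f\colon A\otimes\oo_{\PP^5}(-3)\to\Omega^2_{\PP^5}$.

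First I would use the dimension formula of Lemma \ref{N3 5.1}: a point $[v]$ lies in $M_k-M_{k+1}$ exactly when $\dim(F_v\cap\PP(A))=k-1$. Consequently the defining condition $\dim(F_v\cap\PP(A))\geq 1$ of $\C_{U,A}$ is equivalent to $[v]\in M_2$, so that set-theoretically
\[
\C_{U,A}=M_2\cap\PP(U).
\]
Only this set-theoretic identity is required, since the proposition concerns the set $\C_{U,A}$ rather than the scheme structure $C_{U,A}$. I would then invoke the first part of Lemma \ref{N3 5.1}, which gives $N_2=M_2$. As $N_2$ is by definition the Fitting subscheme $Fitt_2^{X'}(\varphi_{\ast}(\oo_X))$ of $X'$ — and in any case $N_2\subset N_1=C\subset X'$ — we obtain $M_2\subset X'$, and hence $\C_{U,A}\subset X'$, which is the first assertion.

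For the second assertion I would argue by contradiction: suppose $\C_{U,A}=\PP(U)$. Then the whole plane $\PP(U)$ is contained in $X'$. But condition \textbf{O} forbids this. Indeed, three independent members $D_1,D_2,D_3\in|H|$ are the preimages $\varphi_{|H|}^{-1}(\Lambda_i)$ of three independent hyperplanes $\Lambda_i\subset\PP(W)$, whose common intersection $\Pi=\Lambda_1\cap\Lambda_2\cap\Lambda_3$ is an arbitrary plane of $\PP(W)$; since $\varphi_{|H|}$ is finite and birational onto $X'$, the dimension of $D_1\cap D_2\cap D_3$ equals that of $\Pi\cap X'$. The hypothesis that $D_1\cap D_2\cap D_3$ be a curve therefore forces every plane of $\PP(W)$ to meet $X'$ in dimension one, i.e.\ $X'$ contains no plane. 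This contradicts $\PP(U)\subset X'$, whence $\C_{U,A}\neq\PP(U)$.

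I expect the only genuinely substantive point to be the first step, the passage from the linear-incidence description of $\C_{U,A}$ to its rank description through $M_2$; the rest is a formal consequence of Lemma \ref{N3 5.1} and of the meaning of condition \textbf{O}. The one thing to watch carefully is the index shift in the dimension formula — the intersection $F_v\cap\PP(A)$ has dimension $k-1$, not $k$ — which is precisely what pins the relevant locus to $M_2$ (rather than $M_1$) and thus ensures, via $N_2=M_2\subset X'$, the desired containment.
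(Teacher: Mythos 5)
Your proposal is correct and follows essentially the same route as the paper: the condition $\dim(F_v\cap\PP(A))\geq 1$ is translated into corank $\geq 2$ of $f$, i.e.\ membership in $M_2$, then Lemma \ref{N3 5.1} gives $M_2=N_2\subset X'$, and condition \textbf{O} rules out $\PP(U)\subset X'$. Your extra care with the index shift ($k-1$ versus $k$) and the unpacking of condition \textbf{O} via preimages of hyperplanes are just explicit versions of steps the paper leaves implicit.
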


\begin{proof}
First, over the points from the set $\C_{U,A}$ the map  
\[ 
A \otimes
\mathcal{O}_{\mathbb{P}^5}(-3) \xrightarrow{f} \Omega^2_{\mathbb{P}^5}
\]
has corank $\geq 2$; so $\C_{U,A}\subset M_2$. But from Lemma
\ref{N3 5.1} we have $N_2=M_2$, thus 
$$
C_{U,A}\subset M_2=N_2\subset X'.
$$
Finally, it follows from condition \textbf{O} that $X'\subset\mathbb{P}(W)$
cannot contain any plane.
\end{proof}

\begin{defi}Recall that O'Grady defined, for $A\in \mathbb{LG}(10,\bigwedge^3W)$ and $U\in
\Theta_A$, the set 
$\mathcal{B}(U,A)$ of $v\in\PP(U)$ such that
either
\begin{enumerate}
 \item there exists $U'\in (\Theta_A - \{U \})$ such that $v\in
\mathbb{P}(U')$, or
\item $\dim(\mathbb{P}(A) \cap F_v \cap T_U )\geq 1,$
\end{enumerate}
where $T_U$ is the projective tangent space to $G(3,W)$ at $U$.
\end{defi}

\begin{lemm}\label{property-SingC_U} The curve
$C_{U,A}\subset \PP(U)$ can have only isolated singularities outside $\mathcal{B}(U,A)$.
If $\mathbb{P}(U)\neq \C_{U,A}$ then  $\mathcal{B}(U,A)\subset\sing
C_{U,A}$.
Moreover, if $U_1, U_2\in \Theta_A$ then $\mathbb{P}(U_1)$ and $\mathbb{P}(U_2)$
intersect as planes in
$\mathbb{P}^5$ at the 
 point of intersection $\C_{U_1,A}\cap\C_{U_2,A}$.
\end{lemm}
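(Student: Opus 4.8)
The three assertions are of different natures, so the plan is to dispose of the intersection statement first, since it is elementary, and then to set up the local second-order analysis that controls the singularities of $C_{U,A}$.

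\emph{The intersection statement.} Recall that $A$ is isotropic for the wedge pairing $\bigwedge^3 W \times \bigwedge^3 W \to \bigwedge^6 W = \mathbb{C}$. If $U_1,U_2\in\Theta_A$ then $\bigwedge^3 U_1,\bigwedge^3 U_2\in A$, so $\bigwedge^3 U_1\wedge\bigwedge^3 U_2 = 0$ in $\bigwedge^6 W$; as this product is nonzero precisely when $U_1\oplus U_2 = W$, we must have $U_1\cap U_2\neq 0$, whence $\PP(U_1)\cap\PP(U_2)\supseteq\PP(U_1\cap U_2)\neq\emptyset$. For any $[v]$ in this intersection we have $v\in U_1\cap U_2$, so $\bigwedge^3 U_i = v\wedge\omega_i$ with $\omega_i\in\bigwedge^2 U_i$, and hence $\bigwedge^3 U_1,\bigwedge^3 U_2\in (v\wedge\bigwedge^2 W)\cap A$; these two vectors are independent (distinct Pl\"ucker points of $U_1\neq U_2$), so $\dim(F_v\cap\PP(A))\geq 1$, i.e. $[v]\in\C_{U_1,A}\cap\C_{U_2,A}$. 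When $\dim(U_1\cap U_2)=1$ this intersection is the single point asserted, proving the third claim.

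\emph{The velocity form.} For the singularities I would exploit that each $F_v=\PP(v\wedge\bigwedge^2 W)$ is a Lagrangian subspace of $\bigwedge^3 W$ (indeed $(v\wedge\gamma)\wedge(v\wedge\gamma')=0$), and that on $\PP(U)$ the line $\Lambda=\langle\bigwedge^3 U\rangle$ lies in $F_v\cap A$ for every $v$. Passing to the symplectic reduction $\Lambda^{\perp}/\Lambda$ realises $C_{U,A}$ as the degeneracy locus $\{v\mid (A/\Lambda)\cap(F_v/\Lambda)\neq 0\}$, a sextic in $\PP(U)$ whose smooth points are the corank-one points and whose singular points are either the corank-$\geq 2$ points (those $v$ with $\dim(A\cap(v\wedge\bigwedge^2 W))\geq 3$, i.e. $\PP(U)\cap M_3$ by Lemma \ref{N3 5.1}) or the points where the map $v\mapsto F_v/\Lambda$ fails to be transverse to the Schubert divisor in $\mathbb{LG}$. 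This transversality is governed by the velocity quadratic form: if $\xi=v\wedge\gamma$ spans the extra line of $(A\cap F_v)/\Lambda$ and $u\in U$ represents a tangent direction to $\PP(U)$ at $[v]$, then the first-order escape of $\xi$ is measured by $\sigma(\xi,u\wedge\gamma)=(v\wedge\gamma)\wedge(u\wedge\gamma)=v\wedge u\wedge\gamma\wedge\gamma\in\bigwedge^6 W$, so non-transversality at $[v]$ means this vanishes for all $u\in U$.

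\emph{Identification with $\mathcal{B}(U,A)$.} I would then read the two clauses of $\mathcal{B}(U,A)$ directly off this form. If $\gamma\wedge\gamma=0$ then $\xi$ is decomposable, so $\xi=\bigwedge^3 U'$ with $v\in U'\in\Theta_A$ and $U'\neq U$, which is clause~(1). If instead $\xi\in\widehat{T}_U:=\bigwedge^2 U\wedge W$ (the affine cone of $T_U$), equivalently $\gamma\in U\wedge W$, then every monomial of $v\wedge u\wedge\gamma\wedge\gamma$ contains four vectors of the three-dimensional space $U$ and so vanishes, which is clause~(2). In both cases $[v]$ is a non-transverse, hence singular, point of $C_{U,A}$; together with the corank-$\geq 2$ points this gives $\mathcal{B}(U,A)\subseteq\sing C_{U,A}$ when $\PP(U)\neq\C_{U,A}$, the second assertion. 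Conversely, outside $\mathcal{B}(U,A)$ the velocity form does not degenerate along $\PP(U)$, so the only remaining singularities are the corank-$\geq 2$ points $\PP(U)\cap M_3$, of expected codimension three and hence at most isolated, giving the first assertion. For the scheme structure on $C_{U,A}$ and the description of the smooth points of the Schubert divisor I would appeal to \cite[\S 3.1]{O4} and \cite[Cor.~1.5]{O2}.

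\emph{Main obstacle.} The delicate point is the converse needed for the first assertion: that every non-transverse point lies in $\mathcal{B}(U,A)$, i.e. that vanishing of $v\wedge u\wedge\gamma\wedge\gamma$ for all $u\in U$, for the distinguished vector $\xi\in A\cap F_v$, forces $\xi$ to be either decomposable or tangent to $G(3,W)$ at $U$. This reduces to analysing the zero locus of the Pfaffian-type form $\gamma\mapsto\gamma\wedge\gamma$ restricted to $A$, and requires O'Grady's explicit local model for $C_{U,A}$; the corank bookkeeping must also be reconciled with the excess intersection created by $\PP(U)\subset S_A$.
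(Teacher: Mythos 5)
The paper does not actually prove this lemma: its ``proof'' is the one-line citation of \cite[Cor.~3.2.7]{O4}, so you are attempting something the paper itself delegates to O'Grady, and the comparison is really with his argument. Two of your three assertions do go through. The intersection statement is complete and correct: isotropy of $A$ gives $\bigwedge^3U_1\wedge\bigwedge^3U_2=0$, hence $U_1\cap U_2\neq 0$, and any $[v]\in\PP(U_1\cap U_2)$ carries the two independent vectors $\bigwedge^3U_1,\bigwedge^3U_2\in A\cap F_v$, so lies on $\C_{U_1,A}\cap\C_{U_2,A}$ (you should still say a word about why $\dim(U_1\cap U_2)=2$ is excluded or how the statement is to be read in that case). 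Your first-variation computation $\sigma(v\wedge\gamma,u\wedge\gamma)=v\wedge u\wedge\gamma\wedge\gamma$ is the right local model for the degeneracy curve, and it does yield $\mathcal{B}(U,A)\subseteq\sing C_{U,A}$, since both clauses of the definition of $\mathcal{B}(U,A)$ visibly annihilate this form.

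The genuine gap is in the first assertion, and it is the one you flag yourself as the ``main obstacle''. First, the dichotomy ``non-transverse at $[v]$ implies $\gamma\wedge\gamma=0$ or $\gamma\in U\wedge W$'' is false for a fixed representative $\gamma$: take $U=\langle e_1,e_2,e_3\rangle$, $v=e_1$, $\gamma=e_2\wedge e_3+e_4\wedge e_5$; then $v\wedge\gamma\wedge\gamma=2\,e_1\wedge e_2\wedge e_3\wedge e_4\wedge e_5$ annihilates $U$, yet $\gamma\wedge\gamma\neq0$ and $\gamma\notin U\wedge W$. What rescues this example is that $\xi\equiv e_1\wedge e_4\wedge e_5 \bmod \bigwedge^3U$, so a different generator of the pencil $A\cap F_v=\langle\bigwedge^3U,\xi\rangle$ is decomposable and one lands in clause~(1) after all; in general the argument has to run through the restriction of the Pl\"ucker quadrics to this whole pencil (with $\bigwedge^3U$ always a root and the double-root case giving clause~(2)), which is precisely the content of \cite[Prop.~3.2.6]{O4} and \cite[Cor.~3.2.7]{O4} that you would need to reprove and have not. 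Second, ``of expected codimension three and hence at most isolated'' is a non sequitur: expected codimension never bounds the actual dimension of a degeneracy locus from below, and ruling out a one-dimensional component of corank-$\geq 2$ points outside $\mathcal{B}(U,A)$ is part of what the lemma asserts. As written, your proposal establishes the second and third assertions but only reduces the first to the very statement the paper is citing.
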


\begin{proof} 
This is proved in \cite[Cor.~3.2.7]{O4}. 
\end{proof}

 We have the following description of the EPW sextic $S_A$ in the case $\dim \Theta_A=0$.

\begin{lemm}
If $\dim \Theta_A=0$ then $S_A$ is normal. Moreover, $V=\alpha^{-1}(V')$ and $V'= \PP(A)\cap O_2$
are irreducible.
\end{lemm}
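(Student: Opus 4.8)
The plan is to prove the two assertions in the opposite order to their statement: first the irreducibility of $V$, and then deduce normality of $S_A$ from a fiber--dimension count on the desingularization $\pi\colon V\to \PP(W)$. For irreducibility the cleanest route is the one indicated in Remark \ref{rema}: by \cite{O1}, for a Lagrangian $A$ with $\Theta_A$ finite the double EPW variety $X_A$ (the double cover of $S_A$ branched along $\sing S_A$) is irreducible, carries an anti-symplectic involution $\iota$, and $V$ is a blow-up of the quotient $X_A/\iota$. Since a blow-up of an irreducible variety is irreducible, $V$ is irreducible. Consequently $V'=\alpha(V)$ is the image of an irreducible variety under the morphism $\alpha$, hence irreducible, and it coincides set-theoretically with $\PP(A)\cap O_2$ by construction; likewise $S_A=\pi(V')$ is irreducible. (A more self-contained alternative is to realize $V$ as the zero scheme of a section of the rank-$10$ bundle $A^\vee\otimes\oo_P(1)$ on $P:=\PP(\Omega^3_{\PP^5}(3))$ and to note that, because $\dim\Theta_A=0$, this section is regular, so $V$ is a local complete intersection of the expected dimension $4$.)

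For normality, observe that $S_A=\{\det f=0\}$ is a hypersurface in $\PP^5$, hence Cohen--Macaulay and in particular $S_2$; by Serre's criterion it is therefore normal as soon as $\codim_{S_A}\sing S_A\ge 2$, i.e. $\dim\sing S_A\le 2$. I identify $\sing S_A$ with the corank-$\ge 2$ degeneracy locus $M_2$ of the map $f$ (using $N_2=M_2$ from Lemma \ref{N3 5.1}). Now by Lemma \ref{N3 5.1}, for $p\in M_k\setminus M_{k+1}$ the fiber $V\cap\pi^{-1}(p)=\PP(F_p\cap A)$ has dimension $k-1$; thus it is a single point over $M_1\setminus M_2$ and has dimension $\ge 1$ exactly over $M_2$. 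Since $V$ is irreducible of dimension $4$ and $\pi$ maps it onto $S_A$ with generic fiber a single point, $\pi\colon V\to S_A$ is birational. Because $M_1\setminus M_2$ is dense in the irreducible $S_A$, the preimage $\pi^{-1}(M_2)$ is a \emph{proper} closed subset of the irreducible $4$-fold $V$, hence $\dim\pi^{-1}(M_2)\le 3$; as every fiber over $M_2$ has dimension $\ge 1$, this forces $\dim M_2\le 2$. Therefore $\dim\sing S_A\le 2$, and $S_A$ is normal.

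The main obstacle is the irreducibility (equivalently, connectedness) of $V$ itself. In the convention of Proposition \ref{smoothV} it is $\oo_P(-1)$ that is globally generated and defines $\alpha$, so the bundle $A^\vee\otimes\oo_P(1)$ cutting out $V$ is not ample, and one cannot invoke an ampleness-type connectedness theorem directly; this is why importing O'Grady's irreducibility of $X_A$ through Remark \ref{rema} is the efficient move. The only supplementary input is that $\dim\Theta_A=0$ keeps the EPW strata $M_r=Y_A[r]$ of their expected dimension, so that $\dim V=4$ and $M_2\neq S_A$ (equivalently, $S_A$ is reduced); this is exactly O'Grady's description of the stratification of an EPW sextic with finite $\Theta_A$ in \cite{O2}, which also underlies the fiber-dimension bounds used above.
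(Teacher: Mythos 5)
There are two genuine gaps, one in each half of your argument. The first concerns irreducibility: Remark \ref{rema} (and the description from \cite{O1} of $V$ as a blow-up of $X_A/\iota$ for an irreducible double cover $X_A$) is available only when $\PP(A)\cap G(3,W)=\emptyset$, i.e.\ when $\Theta_A=\emptyset$; the lemma you are proving is precisely about the case where $\Theta_A$ is a nonempty finite set, and there the smooth double-cover picture does not apply as stated. Indeed, for $\Theta_A\neq\emptyset$ the map $\pi\colon V\to S_A$ can acquire fibers of dimension up to $3$ over points of the curves $\C_{U,A}$ (this is exactly the phenomenon labelled $\clubsuit$ and $\spadesuit$ in Section \ref{Theta0}), so $V\to S_A$ is not merely the blow-up of a surface, and $V$ need not even be smooth (Proposition \ref{normalV} only yields normality). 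Your fallback --- $V$ is a local complete intersection of expected dimension $4$ --- gives pure-dimensionality and, via a Fulton--Hansen type statement, connectedness, but not irreducibility, as you yourself observe. The paper closes this gap differently: using Proposition \ref{tgW_2} it shows that $V'=\PP(A)\cap O_2$ is smooth away from the preimage of $\G=\bigl(\bigcup_{U\in\Theta_A}\PP(U)\bigr)_{red}$, and then bounds the fibers of $\pi$ over each $\C_{U_0,A}$ (dimension $\le 3$ everywhere and $\le 2$ generically, because for finite $\Theta_A$ each fiber $\alpha(\pi^{-1}(v))\subset F_v$ meets $G(2,5)=F_v\cap G(3,W)$ in finitely many points), so that no $4$-dimensional component can sit over $\G$. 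Some version of this dimension estimate is indispensable and is absent from your proposal.

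The second gap is the identification $\sing S_A=M_2$. This is false once $\Theta_A\neq\emptyset$: each plane $\PP(U)$, $U\in\Theta_A$, lies in $\sing S_A$, whereas $\PP(U)\cap M_2$ is only the sextic curve $\C_{U,A}$. Moreover the justification you give ($N_2=M_2$ from Lemma \ref{N3 5.1}) concerns the Fitting stratification of $\varphi_{\ast}\oo_X$ and says nothing about where the hypersurface $\{\det f=0\}$ is singular; for a determinantal hypersurface the singular locus may a priori strictly contain the corank $\ge 2$ locus. The input actually required --- and what the paper cites --- is O'Grady's description in \cite[\S 1.3, Prop.~1.9]{O2}: $S_A$ is singular exactly along $\bigcup_{U\in\Theta_A}\PP(U)$ together with a surface $\mathcal{D}$. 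Granting that, the codimension count is immediate (finitely many planes of dimension $2$ plus a surface) and Serre's criterion applies; your fiber-dimension argument for $\dim M_2\le 2$ is a nice alternative to citing \cite[Prop.~1.9]{O2}, but it both presupposes the irreducibility of $V$ that you have not established and still leaves out the planes, so as written it does not bound $\dim\sing S_A$.
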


\begin{proof} Since $S_A$ is locally a complete intersection, the normality of
$S_A$ follows from the Serre criterion
 if $S_A$ is non-singular in codimension
$1$.
On the other hand, it follows from \cite[\S 1.3]{O2} that $S_A$ is only singular along the sum of the
planes $\mathbb{P}(U)$ for $U\in \Theta_A$ and along the set $\mathcal{D}$ such that for
$v\in \mathcal{D}$ we have
$$ 
F_v\cap \mathbb{P}(A)\cap G(3,W)=\emptyset \ \ \text{and} \ \
\dim (F_v\cap \mathbb{P}(A))\geq 1.
$$ 
From \cite[Prop.~1.9]{O2} we infer that
$\mathcal{D}$ is a surface.

Since the intersection of $\PP(A)$ with the tangent to $O_2$ at $P$ is
5-dimensional isotropic, we deduce from Proposition \ref{tgW_2} that  $\mathbb{P}(A)\cap O_2$ is smooth at $$P\in (F_v\cap
\mathbb{P}(A))-G(3,W)$$ when $F_v\cap \PP(A)\cap G(3,W)=\emptyset$. Thus we have
to show that the dimension of the exceptional set of $\pi\colon V\to S_A$ that
maps to $\G:=(\bigcup_{U\in \Theta_A}\PP(U))_{red}$ is smaller than $4$.
From the fact that $\Theta_A$ is a finite set it is enough to consider the exceptional set
above $\C_{U_0,A}\subset \PP(U_0)$ for a fixed $U_0\in\Theta_A$.
Since $\Theta_A$ is finite, the fiber $\alpha(\pi^{-1}(v))\subset F_v$ for a
given $v\in \C_{U_0,A}$ intersects $G(2,5)=G(3,W)\cap F_v$ in a finite number of points.
Since the dimension of $G(3,5)\subset\PP^9$ is $6$, we infer $\dim
\pi^{-1}(v)\leq 3$ for all $v\in\C_{U_0,A}$ and  $\dim \pi^{-1}(v)\leq 2$
for a generic $v\in\C_{U_0,A}$.
It follows that $V'$ and $V$ are irreducible.
\end{proof}

The map $V\xrightarrow{\alpha} V'=\PP(A)\cap O_2$ is an isomorphism outside
$\alpha^{-1}(G(3,W))$.
Thus from the proof above we deduce that if $\dim \Theta_A=0$ then $V$ can only be singular at points that
map to a curve $C_{U,A}$ for some $U\in \Theta_A$.

\begin{prop}\label{normalV}
 If $\dim \Theta_A=0$ the varieties $V=\alpha^{-1}(V')$ and $V'=\PP(A)\cap O_2$ are non-singular in codimension $1$.
Moreover, $V$ is normal.
\end{prop}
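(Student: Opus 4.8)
The plan is to verify Serre's criterion: I will bound the singular loci of $V'$ and $V$ to get regularity in codimension $1$ ($R_1$), and then obtain $S_2$ for $V$ from its complete-intersection description in Proposition \ref{ciC}. Recall from the preceding lemma that $V$ and $V'$ are irreducible of dimension $4$, that $\Theta_A=\PP(A)\cap G(3,W)$ is finite, and that $\alpha\colon V\to V'$ is an isomorphism over $V'\setminus G(3,W)$ with $\alpha^{-1}(U)\cong\PP^2$ for $U\in\Theta_A$ (Remark \ref{alpha}). First I would locate $\mathrm{Sing}(V')$. By the tangent-space criterion of Proposition \ref{tgW_2} (as used in the preceding lemma), a point $P\notin G(3,W)$ is smooth on $V'$ whenever $F_v\cap\PP(A)\cap G(3,W)=\emptyset$, where $v=\pi_1(P)$. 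Hence if $P\in\mathrm{Sing}(V')\setminus G(3,W)$ then $F_v\cap\PP(A)$ contains some $U\in\Theta_A$ as well as $P\neq U$; since $F_v\cap\PP(A)$ is a linear space, it is then at least a line, so $v\in\C_{U,A}$. Together with $\mathrm{Sing}(V')\cap G(3,W)\subset V'\cap G(3,W)=\Theta_A$, this gives $\mathrm{Sing}(V')\subset\Theta_A\cup\pi_1^{-1}\big(\bigcup_{U\in\Theta_A}\C_{U,A}\big)$, while for $V$ the discussion preceding this proposition already yields $\mathrm{Sing}(V)\subset\pi^{-1}\big(\bigcup_{U\in\Theta_A}\C_{U,A}\big)$.

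The heart of the argument is then a dimension count for $\pi^{-1}(\bigcup\C_{U,A})$. By Lemma \ref{N3 5.1}, the fibre of $\pi$ (resp.\ $\pi_1$) over $v$ is the linear space $F_v\cap\PP(A)$ of dimension $\mathrm{corank}_v(f)-1$. The sets $\C_{U,A}$ are curves (as in the preceding lemma), contained in $M_2$. Invoking O'Grady's description of the corank stratification of an EPW sextic with $\dim\Theta_A=0$ (\cite{O2},\cite{O3}), the corank-$2$ locus $M_2$ is a surface, $M_3$ is finite, and $M_k=\emptyset$ for $k\geq 4$. Thus the generic point of each curve $\C_{U,A}$ has corank $2$ and one-dimensional fibre, while only finitely many points have corank $3$ and two-dimensional fibre, so $\dim\pi^{-1}(\bigcup\C_{U,A})\leq\max(1+1,\,0+2)=2$. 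Since $\Theta_A$ is finite this gives $\dim\mathrm{Sing}(V')\leq 2=\dim V'-2$ and $\dim\mathrm{Sing}(V)\leq 2=\dim V-2$; that is, both $V'$ and $V$ are $R_1$. (Equivalently for $V$ one may transport: $\mathrm{Sing}(V)\subset\alpha^{-1}(\Theta_A)\cup\alpha^{-1}(\mathrm{Sing}(V'))$, where $\alpha^{-1}(\Theta_A)$ is a finite union of planes $\PP^2$ of dimension $2$ and the second term has dimension $\leq 2$ since $\alpha$ is an isomorphism off the finite set $\Theta_A$.)

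It remains to obtain normality of $V$ via Serre, for which I must check $S_2$. Here I would use Proposition \ref{ciC}: $V$ is cut out in the smooth $14$-dimensional variety $\PP(\Omega^3_{\PP^5}(3))$ (equivalently in $\PP(10\oo_{\PP^5})$) as $\alpha^{-1}(\PP(A))$, i.e.\ by the $10$ linear forms defining $\PP(A)\subset\PP(\bigwedge^3 W)$, which is the zero scheme of a section of a rank-$10$ bundle. As $V$ has the expected dimension $14-10=4$, this section is regular, so $V$ is a local complete intersection in a smooth variety, hence Cohen--Macaulay and in particular $S_2$ (the same yields $R_0$ and $S_1$, so $V$ is reduced). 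Combining $R_1$, $S_2$ and irreducibility, Serre's criterion shows $V$ is normal.

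The main obstacle is precisely the dimension estimate of the second paragraph: the whole argument reduces to controlling where $\pi$ has two-dimensional (or larger) fibres, that is, to the bounds $\dim M_3\le 0$ and $M_{\geq 4}=\emptyset$ for the corank strata $M_k=Y_A[k]$. Without $\dim M_3\le 0$ the preimage $\pi^{-1}(\bigcup\C_{U,A})$ could a priori acquire a three-dimensional divisorial component over a curve of corank-$3$ points and destroy $R_1$; this is exactly the delicate input furnished by O'Grady's stratification of EPW sextics. By contrast the $S_2$ statement is formal once the complete-intersection description of $V$ from Proposition \ref{ciC} is available.
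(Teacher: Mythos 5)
Your reduction to Serre's criterion is sound and matches the paper: the local-complete-intersection description of $V$ gives $S_2$ for free (the paper opens its proof with exactly this remark), and the localization of $\mathrm{Sing}(V)$ inside $\pi^{-1}\bigl(\bigcup_{U\in\Theta_A}\C_{U,A}\bigr)$ via Proposition \ref{tgW_2} is also the paper's starting point. The gap is in your second paragraph, and it is not a technicality: the bounds $\dim M_3\le 0$ and $M_{\ge 4}=\emptyset$ that you import as ``O'Grady's description of the corank stratification'' are not available for Lagrangians $A$ with $\Theta_A\neq\emptyset$. O'Grady's finiteness of the corank-$3$ locus is proved under the hypothesis $\Theta_A=\emptyset$ (or for generic $A$); here the curves $\C_{U,A}$ lie inside $M_2$ and sit in the planes $\PP(U)$, where the corank could a priori jump to $3$ along an entire curve component. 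Establishing that this does not happen is precisely the content of the proposition, and the paper proves it by hand: assuming $V$ singular along a $3$-dimensional component $B$ of $\pi^{-1}(\C_{U_0,A})$, it uses Proposition \ref{tgW_2} and Remark \ref{F_v} to force the plane fibers $B\cap F_v$ to meet the tangent space $T_{U_0}$ along lines in the determinantal cubic, bounds $\dim(\PP(A)\cap T_{U_0})\le 3$ via Lemma \ref{lemP2xP2} and $\dim\Theta_A=0$ (the case $\dim=4$ being excluded by condition \textbf{O} through Proposition \ref{propC_(U,A)}), and then derives a contradiction because a $\le 3$-dimensional linear space cannot contain a one-parameter family of planes that are pairwise disjoint away from $U_0$. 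So your citation begs the question.

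There is a second, structural problem: your target inequality $\dim\pi^{-1}\bigl(\bigcup\C_{U,A}\bigr)\le 2$ is in general false and is not what the paper proves. The paper explicitly allows finitely many points of corank $4$, over which $\pi^{-1}(v)\cong\PP^3$ is a \emph{divisor} in $V$ (these are the components $E_i$ of type $\spadesuit$ in Section \ref{Theta0}, which persist after Proposition \ref{normalV} and are dealt with separately in the proof of Proposition \ref{og-conj}). So $R_1$ cannot be obtained by bounding the dimension of the preimage; one must show that $V$ is smooth at the generic point of every $3$-dimensional component of that preimage. That requires the transversality/tangency analysis, not a fiber-dimension count.
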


\begin{proof}
 Note that $V$ is locally a complete intersection, thus it is enough to show the
first part.
 Our aim is to show that the singular points of $\PP(A)\cap O_2$ are contained
in the sum of the tangent spaces to $G(3,W)$ at points from $\Theta_A$.
 Next we show that the intersection of $\PP(A)\cap O_2$ with those tangent
spaces is of codimension~$2$.

We need to consider points in the pre-image 
$$
B':=\pi^{-1}(\C_{U,A}).
$$
Denote by $B$ an irreducible component of $B'$.
Suppose that for a given $U_0$ this set is $3$-dimensional; then either there is
a one-parameter family of planes parameterized by $\C_{U_0,A}$ or there is a
three-dimensional linear space (i.e. $\mathbb{P}^3$) mapping to a point on $\C_{U_0,A}$. Let us consider the first
case; the other is treated similarly.

Suppose that $V$ is singular along $B$. Then  at each  $p\in
\alpha(B)-G(3,W)$ the space $\mathbb{P}(A)$
does not intersect transversally the tangent plane to $O_2$. By Proposition \ref{tgW_2} the intersection
$\mathbb{P}(A)\cap F_v\cap F'_w$, where $v=\pi_1(p)$ and $w=\pi_2(p)$, contains
the line $[p,U']\subset F_v$ where $U'$ is one of the finite number of points (at most two) in
 $\mathbb{P}(A)\cap G(3,W)\cap F_v\cap F'_w$.

We claim that for a generic choice of $p\in B$ the line $[p,U']$ is contained in
the tangent space to $G(2,5)\subset F_v$ at $U$ (i.e. $B\subset T_U$).
Since $\Theta_A$ is finite,  for a generic choice of $p\in B$ the
line $[p,U']$ with $U'\in \mathbb{P}(A)\cap G(3,W)$ intersects $G(3,W)$ in one
point.
From Remark \ref{F_v} the line is contained in a five-dimensional linear space
$L_p=F_v\cap F'_w$ such that $L_p\cap G(2,5)$ is a quadric.
Since this line intersects $G(3,W)$ in one point, it has to be tangent to $G(2,5)$.
The claim follows.

Let $T_U$ be the projective tangent space to $U\in G(3,W)$ and
$M_U:=\mathbb{P}(A)\cap T_U$.
The following is a nice exercise.

\begin{lemm}\label{lemP2xP2}
 The intersection $K_U:=T_U\cap G(3,W)$ can be seen as 
the set of planes in
$\mathbb{P}(W)$ that intersect the plane $\mathbb{P}(U)$ along a line.
In particular  $K_U$ has dimension~$5$ and is a cone
over the Segre embedding of $\mathbb{P}^2\times\mathbb{P}^2$.
The sum of the linear spaces $F_v\cap
T_U$ for $v\in \mathbb{P}(U)$ is a cone over the determinantal
cubic $E_U$.  Moreover, $K_U$  
is the singular set of $E_U$.
\end{lemm}

First we have $\dim M_U\leq 4$, since otherwise we infer 
$$\dim \Theta_A\geq \dim
(M_U\cap K_U)\geq 1,
$$  
contrary to $\dim \Theta_A=0$.
So we have three possibilities: $\dim M_U=2$, $3$, or $4$.
Note that  $F_v\cap T_U$ is the tangent space to $G(2,5)$ at $U$,
so has dimension~$6$.
If $\dim M_U=4$ then each linear space $F_v\cap T_U$ for $v\in
\mathbb{P}(U)$ intersects $\PP(A)$ along a linear space of dimension at least
$1$ (because such an intersection contains  $F_v\cap M_U$). Thus $\C_{U,A}=\PP(U)$, contrary to Proposition \ref{propC_(U,A)}.

So we can assume that $\dim M_U\leq 3$.
We saw above that the generic fiber of $\pi|_B\colon B\to C_{U,A}$ is a plane contained in
$T_U$. Since these
fibers are contained in $M_U$ and disjoint outside $U$, we obtain a
contradiction.
\end{proof}
Finally, we will use several times the following:

\begin{prop}\label{dual-tangent}
Suppose that the set of points $v\in\PP(W)$ with $\dim (F_v\cap \PP(A))\geq 2$ is a
curve $C\subset \PP(W)$. Then the tangent space $T_{v_0}\subset \PP(W)$ to $C$ at $v_0$ is perpendicular
to the linear space spanned by the image $\pi_2(\PP(A)\cap F_v)\subset \PP(W^{\vee})$.
\end{prop}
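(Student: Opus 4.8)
The plan is to reduce the statement to a one-line computation that uses only the fact that $A$ is isotropic for the symplectic form on $\bigwedge^3 W$. First I would fix the concrete descriptions of the two projections on $O_2$. For $[v]\in\PP(W)$ the fibre $F_v$ is the projectivization of the Lagrangian $\widehat{F}_v:=v\wedge\bigwedge^2 W\subset\bigwedge^3 W$, and on $F_v$ the second projection is the quadratic map $\pi_2([v\wedge\omega])=[v\wedge\omega\wedge\omega]\in\PP(\bigwedge^5 W)=\PP(W^\vee)$. Since $v\wedge(v\wedge\omega\wedge\omega)=0$, the image $\pi_2(F_v)$ lies in the hyperplane $v^\perp$, so the whole span $L:=\langle\pi_2(\PP(A)\cap F_{v_0})\rangle$ is automatically perpendicular to $v_0$; thus only the tangent \emph{direction} of $C$ at $v_0$ needs to be treated. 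Using $\dim(F_v\cap\PP(A))=\dim(\widehat{F}_v\cap A)-1$, the hypothesis says that $C$ is the locus where $\dim(\widehat{F}_v\cap A)\ge 3$.

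Next I would work at a general smooth point $v_0\in C$ and take a local holomorphic curve $v(t)$ in $C$ with $v(0)=v_0$, lifted to $\tilde v(t)\in W$; write $v_0'=\tilde v'(0)$, so that the cone over $T_{v_0}C$ is $\langle\tilde v_0,\tilde v'(0)\rangle$. Along $C$ the dimension of $\widehat{F}_v\cap A$ is constant (equal to its generic value $\ge 3$) on a dense open set, hence these intersections form a vector subbundle of $A\otimes\mathcal{O}_C$ there. Consequently any $\xi_0=\tilde v_0\wedge\omega_0\in\widehat{F}_{v_0}\cap A$ extends to a holomorphic family $\xi(t)=\tilde v(t)\wedge\omega(t)\in\widehat{F}_{v(t)}\cap A$. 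As $A$ is a fixed linear subspace, differentiation gives $\xi'(0)=\tilde v'(0)\wedge\omega_0+\tilde v_0\wedge\omega'(0)\in A$.

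The crux is then isotropy. Fixing any second element $\eta_0=\tilde v_0\wedge\omega_1\in\widehat{F}_{v_0}\cap A$, the Lagrangian condition forces $\xi'(0)\wedge\eta_0=0$ in $\bigwedge^6 W$; expanding $(\tilde v'(0)\wedge\omega_0+\tilde v_0\wedge\omega'(0))\wedge(\tilde v_0\wedge\omega_1)$ and discarding every term containing $\tilde v_0\wedge\tilde v_0$ leaves $\tilde v'(0)\wedge\tilde v_0\wedge\omega_0\wedge\omega_1=0$. This says exactly that the functional $\tilde v_0\wedge\omega_0\wedge\omega_1\in\bigwedge^5 W\cong W^\vee$ annihilates $\tilde v'(0)$. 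Letting $\omega_0,\omega_1$ range over representatives of a basis of $\widehat{F}_{v_0}\cap A$ and polarizing the quadratic map $\pi_2$, the classes $[\tilde v_0\wedge\omega_i\wedge\omega_j]$ span $L$; each annihilates both $\tilde v_0$ (from the first paragraph) and $\tilde v'(0)$, so $L$ is perpendicular to the cone over $T_{v_0}C$, which is the assertion.

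The one point requiring care is the extension step: that a general $\xi_0\in\widehat{F}_{v_0}\cap A$ deforms holomorphically inside $A\cap\widehat{F}_{v(t)}$ along $C$. This is precisely where I would invoke that $\dim(\widehat{F}_v\cap A)$ is constant on a dense open subset of $C$, so the intersections assemble into a vector subbundle over that open set and local sections exist at a general smooth $v_0$ (where $T_{v_0}C$ is in any case defined). Granting this, everything else is a purely formal consequence of the isotropy of $A$ together with $\tilde v_0\wedge\tilde v_0=0$, so I expect no further difficulty.
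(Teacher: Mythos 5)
Your proof is correct, and it is essentially a self-contained unpacking of what the paper obtains by citation. The paper's own proof is two lines: it introduces O'Grady's incidence locus $\tilde{\Delta}$, quotes \cite[Prop.~2.3]{O3} for the description of its tangent space as $\operatorname{Ker}\tau_{K}^{v_0}$ with $K=\PP(A)\cap F_{v_0}$, and then notes that $\pi_2|_{\PP(K)}$ is given by the Pl\"ucker quadrics $\phi_v^{v_0}$ of \cite[eq.~(2.1.11)]{O3}, so that the perpendicularity is read off from O'Grady's formula. Your argument reproduces exactly the computation that underlies that formula: the first-order deformation $\xi'(0)=\tilde v'(0)\wedge\omega_0+\tilde v_0\wedge\omega'(0)$ of a local section of the kernel bundle, killed against a second element $\tilde v_0\wedge\omega_1$ of $A\cap\widehat{F}_{v_0}$ by the Lagrangian condition, leaving $\tilde v'(0)\wedge\tilde v_0\wedge\omega_0\wedge\omega_1=0$; together with the polarization of the quadratic map $\pi_2$ this is precisely the content of $\tau_K^{v_0}$ and $\phi_v^{v_0}$. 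What your route buys is independence from the external reference and transparency about where isotropy of $A$ enters; what the paper's route buys is brevity and the fact that O'Grady has already verified the smoothness/constant-rank hypotheses for $\tilde{\Delta}(0)$. Note that both arguments carry the same caveat, which you correctly isolate: the extension of $\xi_0$ to a holomorphic family requires $v_0$ to lie in the open dense subset of $C$ where $\dim(\widehat{F}_v\cap A)$ is locally constant and $C$ is smooth, which matches the paper's implicit restriction to the open stratum $\tilde{\Delta}(0)$ and suffices for the application in Lemma~\ref{noEx1}.
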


\begin{proof} Denote after O'Grady 
$$
\tilde{\Delta}(0):=\{(A,v)\in
LG(10,\textstyle\bigwedge^3 W): \dim(F_v\cap \PP(A))= 2 \}.
$$ 
It was observed by O'Grady that
$\tilde{\Delta}(0)$ is smooth and is an open subset of $\tilde{\Delta}$ where we
have $\dim(F_v\cap \PP(A))\geq 2$. We know from \cite[Prop.~2.3]{O3} the description
of the tangent space to $\tilde{\Delta}$. In particular 
$T_{v_0}=\operatorname{Ker} \tau_{K}^{v_0}$, where $K:=\PP(A)\cap F_{v_0}$, in the notation of
\cite[eq.~(2.1.11)]{O3}. It remains to show that the linear space spanned by
$\pi_2(K)$ is perpendicular to  $\operatorname{Ker} \tau_{K}^{v_0}$.
To see this, note that $\pi_2|_{\PP(K)}$ is given by the system
of Pl\"{u}cker quadrics $\phi_v^{v_0}$ and use 
\cite[eq.~(2.1.11)]{O3}.
\end{proof}

\subsection{The case when  $\dim\Theta_A=0$}\label{Theta0}
The aim of this section is to study the case $\dim \Theta_A=0$ in Proposition
\ref{og-conj} by showing that an EPW sextic $S_A$ with $\dim \Theta_A=0$ cannot be the adjoint hypersurface to
the birational image of a polarized IHS fourfold $(X,H)$ with $b_2(X)=23$ and $H^{4}=12$ satisfying condition \textbf{O}.

The closure in $V$  of the exceptional set of the restriction of the morphism $\pi$: 
$$  
V\dashrightarrow
S_A-\Big(\bigcup_{U\in\Theta_A}\mathbb{P}(U)\Big)
$$ 
is a reduced Weil divisor $E_G$ that
maps to the surface $\supp N_2$.
We also have exceptional sets of $\pi$ over points from
$\bigcup_{U\in\Theta_A}\mathbb{P}(U)$.
Since  $O_2\cap \mathbb{P}(A)$ is irreducible, we deduce that
there are two kinds of irreducible components of
the exceptional set of $\pi$: either
\begin{itemize}
\item[$\clubsuit$]
 one-parameter families of planes
such that the image through $\pi$ is a curve $C_0$ which is a component of $C_{U,A}\subset \PP(U)$, or
\item[$\spadesuit$] 
$3$-dimensional linear spaces $E_i$ for $i=1,\dots,s$ mapping to
points in $\C_{U,A}\subset S_A$ for some $U\in G(3,W)$.
\end{itemize}
We believe that such exceptional sets cannot exist. However, we only prove that the first type of exceptional set cannot occur (this is enough to complete the proof of Proposition \ref{og-conj}). For this we need to better understand the duality between $S_A$
and $S_A'$.
It would be nice to find a simpler proof of the following:

\begin{lemm}\label{noEx1}
The morphism $\pi$ has no exceptional set as  in  $\clubsuit$.
\end{lemm}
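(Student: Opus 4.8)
The plan is to derive a contradiction from the existence of a $\clubsuit$-type component by exploiting the duality between $S_A$ and $S_A'$ established in Section \ref{dual}, together with the numerical relations in $\Pic(V)$ from Lemma \ref{W_2}. Suppose $B\subset V$ is an irreducible exceptional component of $\pi$ that is a one-parameter family of planes $\mathbb{P}(U)$, mapping under $\pi$ to a curve $C_0\subset C_{U,A}$ for some $U$ varying over a curve in $\Theta_A$ (or rather whose image is a one-dimensional piece of the degeneracy locus). The key observation I would use is that $B$ is contracted by $\pi_1$ (equivalently $\pi$) to the curve $C_0$, so $B$ is swept out by the fibers $\alpha(\pi^{-1}(v))$ for $v\in C_0$, each of which is a linear space of dimension $\geq 1$ contained in $F_v\cap \mathbb{P}(A)$. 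By Proposition \ref{dual-tangent}, the tangent direction to $C_0$ at a generic point $v_0$ is perpendicular to the span of $\pi_2(\mathbb{P}(A)\cap F_{v_0})$, which controls how $B$ sits with respect to the second fibration $\pi_2$.

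First I would determine the class of $B$ (or of its closure) in $CH^1(V)=\Pic(V)$, writing $B\equiv aH + bH_2$ for integers $a,b$ using the generators from Lemma \ref{W_2}, and pin down $a,b$ by intersecting with the contracted curves of both fibrations. Concretely, let $\ell$ be a line contracted by $\pi_2$ (these cover $E_2$, and by the computation preceding Lemma \ref{W_2} satisfy $\ell.T=1$, $\ell.H=2$, hence $\ell.H_2=3$); and let $m$ be a line contracted by $\pi_1$ inside a contracted plane. Since $B$ is contracted by $\pi_1$ onto a curve, the restriction $B|_{\text{fiber}}$ and the intersection numbers $B.\ell$, $B.m$ are computable from the geometry of the fibers: a line $m$ in a contracted plane satisfies $m.H=0$, while $m.H_2$ is determined by the degree of the Plücker embedding described in Remark \ref{F_v}. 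This should force $B$ into a specific numerical class. The main obstacle will be this step: extracting the precise coefficients $a,b$ requires a careful analysis of how the one-parameter family of planes meets $E_2$ and of the behaviour of $\pi_2$ on $B$, using Remark \ref{F_v} (that $\pi_2|_{F_v}$ is given by Plücker quadrics on $G(2,5)$ with $5$-dimensional fibers spanned by sub-$G(2,4)$'s) to control $\pi_2(B)$.

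Once the class of $B$ is known, I would aim for a contradiction of the same shape as in the proof of Proposition \ref{general case} (Section \ref{proof}): namely, I would intersect an effective divisor built from $B$ (or $D-E$, reusing Proposition \ref{ciC} and Lemma \ref{N3 5.1}) with a curve $\ell$ contracted by $\pi_2$, and show that the resulting intersection number is negative while the divisor is forced to be effective, contradicting $\ell\subset$ that divisor unless a divisor equivalent to a negative combination of $H_2$ and $T$ is effective. The expected conclusion is an equality in $CH^1(V)$ of the form (schematically)
\[
B-E_2 \equiv -cH_2 - T
\]
for some $c>0$, with the left side effective and the right side manifestly non-effective, exactly as $-3H_2-T$ was excluded in Section \ref{proof}.

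The hard part, as noted, is establishing the numerical class of the $\clubsuit$-component precisely enough that the intersection computation closes. I would handle this by a dual argument: if the family of contracted planes exists for $\pi_1$, the duality of Section \ref{dual} and Lemma \ref{noEx1}'s counterpart for $S_A'$ would let me transport $B$ to the $\pi_2$-side, where Proposition \ref{dual-tangent} identifies the tangent space to the image curve with a kernel of an explicit Plücker form $\tau_K^{v_0}$; comparing this with the fibration structure should over-determine $B$ and yield the required negative intersection. If a direct computation of $B.\ell$ proves recalcitrant, the fallback is to use the irreducibility of $V'=\mathbb{P}(A)\cap O_2$ (so $B$ cannot be a separate component) together with the dimension count $\dim\pi^{-1}(v)\leq 3$ to bound the fiber dimensions and rule out the family on dimension grounds combined with the effectivity contradiction above.
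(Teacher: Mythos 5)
Your plan does not go through, and the gap sits exactly at the step you flag as ``the main obstacle'': writing the $\clubsuit$-component $B$ as $aH+bH_2$ in $\Pic(V)$. In the situation of Lemma \ref{noEx1} one has $\dim\Theta_A=0$, so $\PP(A)$ meets $G(3,W)$ and $V$ is only normal (Proposition \ref{normalV}), not smooth; the Lefschetz theorem \cite[Thm.~1]{RS} that gives $\rk\Pic(V)=2$ is invoked in the paper only when $V$ is smooth and avoids $\sing O_2$. Without it, the exceptional Weil divisors need not lie in the span of $H$ and $H_2$, and the one relation that is available (pulling back the quintic partials of $s_A$) constrains only the \emph{sum} $E_G+\sum a_iE_i$ of all exceptional components, not the individual component $B$. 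Your fallback (irreducibility of $V'$ plus $\dim\pi^{-1}(v)\leq 3$) cannot rule $B$ out either: a one-parameter family of planes over a curve is a threefold, i.e.\ a Weil divisor in the fourfold $V$, which is perfectly compatible with irreducibility and with the fiber-dimension bound. There is also a computational slip: a line $\ell$ contracted by $\pi_2$ has $\ell.H_2=0$ by definition of $H_2=\pi_2^{\ast}(\mathcal{O}(1))$, and then $H+H_2=2T$ with $\ell.T=1$ gives $\ell.H=2$; your value $\ell.H_2=3$ is inconsistent with this.

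The paper's proof is instead a direct geometric analysis of the threefold $G=\alpha(B)$ swept out by the planes $G_v\subset F_v$. It first shows, via the proof of Proposition \ref{normalV} and \cite[Prop.~3.2.6]{O4}, that each $G_v$ meets $T_{U_0}$ in a line of the determinantal cubic $E_{U_0}$, so that $C_0$ is a triple component of $C_{U_0,A}$ (hence a multiple line or a multiple conic) and $G\cap T_{U_0}$ is a plane. It then takes a generic hyperplane section $N$ of $G$ (a scroll over $C_0$), computes $H|_N$ and $H_2|_N$ using $2T=H+H_2$ together with Proposition \ref{dual-tangent} (which places $\pi_2(G)$ inside an explicit rank-three quadric cone with vertex $\PP(U_0^{\vee})$), and eliminates the line case and the conic case separately --- the latter by identifying $\pi_2(N)$ with a projection of the double Veronese surface and showing that the resulting conic $D_0\subset Q_v$ would have to pass through the vertex of the cone $Q_v$, which is impossible. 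None of these steps is supplied or replaced by anything in your outline, so the proposal as it stands does not constitute a proof.
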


\begin{proof}
Suppose that such an exceptional  set $G'\subset V$ exists.
Denote by 
$$
G\subset \PP(A)\cap O_2
$$ 
the image of $G'$ under $\alpha$ such that
each fiber $G\supset G_v=G\cap F_v$ is a plane and $G$ maps to a curve $C_0\in
\PP(U_0)$ (which is a component of $C_{U_0,A}$).

We claim that $G_v$ intersects $T_{U_0}$ along a line contained in the determinantal cubic $E_{U_0}\subset T_{U_0}$. Indeed,
from the proof of Proposition \ref{normalV} it follows that the generic fiber
$G_v$ cannot be contained in $T_{U_0}$.
Next from \cite[Prop.~3.2.6 (3)]{O4} we infer that 
$G_v$ intersects the tangent
space $T_{U_0} \cap F_v$ only at $U_0$ and is disjoint from $\Theta_A$; thus
$C_0$ has a node at $v$.
The claim follows since the nodes on $C_0$ are at isolated points. We
also deduce that $C_0$ is a triple component of $C_{U,A}$, so it is either a multiple conic or
a line.

We infer that  $G\cap T_{U_0}$ has dimension $\geq 2$. From the proof of Proposition \ref{normalV} we know that $\dim(T_{U_0}\cap
\PP(A))\leq 3$, so either $G\cap T_{U_0}$ is a plane, or
$\dim(T_{U_0}\cap \PP(A))=3$.

Let us show that the second case cannot happen. Suppose that $\dim(T_{U_0}\cap \PP(A))=3$. Since $G_v\cap T_{U_0}$ is a line contained in the cubic $E_{U_0}$, we infer that $G\cap
T_{U_0}$ is a cone over a cubic curve $\mathcal{A}$ (which is a section of
$E_{U_0}$). Denote by $N$ a generic hyperplane section of $G$. Note that $N$ is
smooth because it maps under $\pi_1$ to a smooth curve with linear spaces as fibers. It
follows that $N$ is the projection of a rational normal
scroll that has $\mathcal{A}$ as $\PP^2$ section.  
This is only possible when $\mathcal{A}$ is reducible, but then $N$ should be reducible, a contradiction.
We deduce that $G\cap T_{U_0}$ is a plane.

\begin{claim} The support of the curve $C_0$ cannot be a line.
\end{claim}
\begin{proof} Suppose the contrary and fix a $v\in C_0$.
 Since the morphism $\pi_2|_{G_v}$ is given by a linear subsystem of conics with
base point $G_v\cap G(3,W)$, it is birational and contracts the line $G_v\cap
T_{U_0}$ to a point, we deduce that $\pi_2(G_v)$ is a surface which is an
irreducible quadric cone $Q_v\subset \PP^5$ tangent to $\PP(U_0^{\vee})$ along a
line with vertex at the image of the contracted line (because the image of a
line passing through $U_0$ on $G_v$ is a line passing through the image of
$G_v\cap T_{U_0}$).
Consider the rational scroll $N$ and denote by $f$ 
a generic fiber of $\pi_1|_N$ and by $c_0$ the
section $T_{U_0}\cap N$. We saw that $c_0$ is a line (since $G\cap T_{U_0}$ is a plane).
We have $H|_{N}=f$ and $H_2|_{N}=a.f+b.c_0$ for some $a,b\in
\index{}\mathbb{Z}$.

We have two possibilities: $\pi_2(G)$ is either a quadric surface or a
threefold. Let us treat the first case.
Suppose that $Q_{v_1}$ and $Q_{v_2}$ are equal for $v_1\neq v_2$. Since $C_0$ is a line, $H|_{c_0}$ has degree~$1$.
Next, from $2T=H+H_2$ and  $\pi_2(c_0)\subset \PP(U_0^{\vee})\cap Q_{v_0}$ we infer that $ H_2|_{c_0}$ has degree $2\deg
c_0-1\leq 2$. So $c_0$ is a line. It follows that $N\subset \PP(A)$ is embedded
by $c_0+(e+1)f$ where $c_0^2=-e$ on $N$. Observe that $\pi_2|_N$ has connected linear fibers which are linear sections of the spaces $F_v'$. On the other hand, 
$\pi_2(N)=\pi_2(G)=Q_{v}$ so $2=(H_2|_{N})^2$ because $\pi_2|_N$ is birational. So using $2T=H+H_2$ we infer
$H_2=2c_0+(2e+1)f$, contradicting $4(2e+1)=(H_2|_{N})^2$.

It follows that the dimension of $\pi_2(G)$ is $3$, and $\pi_2|_N$ is birational.
One should have in mind that $\pi_2|_G$ is an isomorphism outside the singular
locus 
$$
\mathbb{G}=G'\cup \bigcup_{U\in \Theta_A}\PP(U^{\vee})
$$ 
of $S_A'$. From
Proposition \ref{dual-tangent} the tangent line $T_r{C_0}$ to $C_0$ at $r \in C_0$
is projectively dual to the space $\PP^3_r$ spanned by $\pi(G_r)=Q_r$. We have assumed
that $C_0$ is a line, so the image of $\pi_2(G)$ is a projective space that we
denote by $\PP$.
Since the double point locus of $ S_A'$ is of codimension~$2$, we infer that
$\pi_2|_{G}$ is birational.
Consider the locus $\mathbb{G}'$ of points $p\in \PP$ such that there are two
different $v_1,v_2\in C_0$ with $p \in Q_{v_1}\cap Q_{v_2}$ and
$\mathbb{G}'\subset \mathbb{G}$. We shall obtain a contradiction by proving that
$\mathbb{G}'=\PP$.
Fix a generic $v_0\in C_0 $; it is enough to prove that $Q_{v_0}\subset
\mathbb{G}'$.  
When $v\in C_0$ varies, the center of the cone
$Q_v$ moves along a curve in $\PP(U_0^{\vee})\subset \PP$ such that $Q_v$ is
tangent to $\PP(U_0^{\vee})$. We conclude by observing that  such quadrics cannot be in the same pencil determined by a common quartic curve.
\end{proof}

We deduce that $C_0$ is a triple conic and $T_{U_0}\cap \PP(A)$ is a plane. Consider again the ruled surface $N$ such that $c_0$ is a line and $N\subset
\PP(A)$ is embedded by $c_0+(e+1)f$ for some $e\in\mathbb{Z}$. Then
$H|_{N}=2f$ so $H_2|_N=2c_0+2e.f$. On the other hand, using again Proposition
\ref{dual-tangent} we see that $\pi_2(G)\subset \PP(W^{\vee})$ is contained in a quadric hypersurface
$\mathcal{Q}$ of rank $3$.
More precisely, $\mathcal{Q}$ is a cone, with a plane $\PP(U_0^{\vee})$ as
vertex, over a conic \textit{curve} $\mathcal{W}$ such that $\mathcal{Q}$
is covered by projective spaces $\PP^3_r$ dual to the tangent lines to $C_0$.
It follows that
$\pi_2|_G$ is an isomorphism outside $G\cap T_{U_0}$.
Consider the pull-back by $\pi_2|_N$ of a generic hyperplane containing
$\PP(U_0^{\vee})$. Since the intersection of the
hyperplane with $\mathcal{Q}$ are two projective spaces,  the class
of the pull-back $H_2|_N$ is $a.c_0+2.f$. Using $2T=H+H_2$ (see Lemma \ref{W_2})
we compute that $a=2$ and
$e=1$, thus $N$ is the blow-up of $\PP^2$ in one point with $c_0$ as exceptional line. Moreover, $\pi_2|_N$ contracts $c_0$ and maps $N$ to a projective plane. We infer that $\pi_2(N)$ intersects $\PP(U_0^{\vee})$
at only one point which is the image of $c_0$.
It also follows that $\pi_2(N)$ is either the second Veronese embedding of $\PP^2$ or a smooth
central projection of this second Veronese (because $\pi_2(N)$ can be singular only at one point).
Consider the curve $D_0$ which is the generic fiber of the projection of $\pi_2(N)$ with center $\PP(U_0^{\vee})$ to the curve $\mathcal{W}$. The curve $D_0$ can be seen as the intersection $\pi_2(N)\cap \PP_v^3$ for some
generic $v\in C_0$. Since there are no lines or degree three curve contained in the projection of the double Veronese, and a hyperplane section intersects $\pi_2(N)$ along a degree $4$ curve, we
deduce that $D_0$ is an irreducible plane conic.
We obtain a contradiction since a smooth conic $D_0\subset Q_v=\pi(G_v) \subset \PP_v^3$ cannot contain the
center of the cone $Q_v$.
\end{proof}

We can now return to the proof of Proposition \ref{og-conj}. We showed that the exceptional locus of $\pi$
consists of $3$-dimensional linear spaces $E_i$ for $i=1,\dots,s$ mapping to
points in some $\C_{U,A}\subset S_A$ for some $U\in G(3,W)$.
To obtain a contradiction we proceed  as in the general case.
By \cite[\S 1.2.2]{D} the rational map between the sextic $S_A$ and its dual
$S_A'$ is given by the partial derivatives
of the sextic $s_A$ defining $S_A$.
The composition
\[
V\xrightarrow{\pi} S_A\dashrightarrow S_A'\subset \PP(W^{\vee})
\]
is given by the linear system induced by the pull-back of quintics which are the
partial derivatives of $s_A$ on $V$.
On the other hand, by Remark \ref{alpha}, each such generic quintic $q'$
corresponds to an irreducible Cartier divisor $Q'\in|2T-H|$
on $V$.
The divisor $Q'$ coincides with the proper transform of the zero locus $\{ q'=0\} \cap S_A$ on $V$ (they are equal on an open subset of~$Q'$).
Recall that $S_A$ has ordinary double points along a generic point of $\supp
N_2$. It follows from  Lemma \ref{noEx1} that
$$
\pi^{\ast}(Q')=E_G+\sum a_iE_i +B,
$$ 
where $a_i\geq 0$,
$B\in|2T-H|$ is an effective Cartier divisor on the
normal variety $V$, $E_i$ are exceptional divisors mapping to points on the singular locus $C\subset X'$, and
$E_G$ is the exceptional divisor over $\supp N_2$. We infer 
$E_G+\sum
a_iE_i$ is a Cartier divisor in the linear system $|6H-2T|$.

By Proposition \ref{ciC} we find,  as in the general case, a divisor
$D\subset V$ in the linear system $|3H-T|$ that maps to $C\subset S_A$.
From Proposition \ref{N3 5.1} we deduce that $D$ is decomposable such that $D-E_G$
is an effective Weil divisor.
 We infer that
$$
D-\Big(E_G+\sum a_iE_i\Big)
$$ 
is a Cartier
divisor in the linear system $|3T-3H|$, denote it by $D'$. Since the Weil divisor $E_i$ intersects
$\alpha^{-1}(U)$ in isolated points, we infer that $D'$ restricts to an effective
curve on the plane $\alpha^{-1}(U)$, where $U\in \Theta_A$ is fixed. On the
other hand,  $\oo_V( T)|_{\alpha^{-1}(U)}=\mathcal{O}_{\alpha^{-1}(U)}$ and
$$\oo_V(H)|_{\alpha^{-1}(U)}=\mathcal{O}_{\alpha^{-1}(U)}(1).$$ Thus the
restriction of a divisor from
$|3T-3H|$ cannot be an effective curve on
$\mathbb{P}(U)$ (see \cite[Prop.~1.35(1)]{KM}). It follows that $ D$ contains
$\alpha^{-1}(U)$,
so $X'$ contains $\mathbb{P}(U)$,  a contradiction by Proposition
\ref{propC_(U,A)}. It follows that the adjoint sextic $S_A$ has $\dim \Theta_A\geq 1$.

\subsection{The case when  $\dim \Theta_A\geq 2$}\label{Theta2}

In this section we
consider adjoint EPW sextics with $\dim \Theta_A\geq 2$.
We show that such a sextic has to be very special as described in  Proposition \ref{og-conj}.

\subsubsection{$\dim \Theta_A\geq 3$}
We show first that $\dim \Theta_A\geq 3$ cannot happen.
Choose an irreducible component
$\Theta'_A$ of $\Theta_A$. Denote by $$\G=(\pi(\alpha^{-1}(\Theta_A')))_{red}$$ the reduced sum of the planes $\mathbb{P}(U)$ for
$U\in \Theta'_A$.

\begin{lemm}\label{lemdim} If $\Theta_A'$ has dimension~$k$ and $\G$ has
dimension $\leq k+1$ then there is a point $U\in
\Theta'_A$ such that $\C_{U,A}$ is a plane.
\end{lemm}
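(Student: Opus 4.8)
The plan is to set up the incidence variety of the family of planes $\{\mathbb{P}(U)\}_{U\in\Theta_A'}$ and to show, by a pure dimension count, that these planes must overlap so heavily that a point of a generic plane already lies on a second plane of the family; Lemma~\ref{property-SingC_U} then forces that point into $\C_{U,A}$, and running this over a whole plane identifies $\C_{U,A}$ with $\mathbb{P}(U)$.

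First I would introduce
\[
I=\{(U,v)\in\Theta_A'\times\PP(W)\mid v\in\mathbb{P}(U)\},
\]
with the two projections $p\colon I\to\Theta_A'$ and $q\colon I\to\PP(W)$. Since the fibre $\alpha^{-1}(U)$ is a projective plane mapping isomorphically to $\mathbb{P}(U)$ under $\pi$ (Remark~\ref{alpha}), the map $p$ realises $I$ as the projectivisation $\PP(\mathcal{U})$ of the tautological rank-$3$ subbundle restricted to $\Theta_A'$; hence $I$ is irreducible of dimension $k+2$. By construction $q$ is proper with image $\G$, so $q\colon I\to\G$ is a dominant morphism of irreducible varieties, and by hypothesis $\dim\G\leq k+1$.

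The key point is the fibre-dimension estimate. Since $q$ is a dominant morphism of irreducible varieties, every fibre over a point of $\G$ has dimension at least $\dim I-\dim\G\geq(k+2)-(k+1)=1$. Thus for every $v\in\G$ the set $\{U\in\Theta_A'\mid v\in\mathbb{P}(U)\}$ is positive-dimensional, so it contains two distinct members $U\neq U'$; as $U,U'\in\Theta_A$ and $v\in\mathbb{P}(U)\cap\mathbb{P}(U')$, Lemma~\ref{property-SingC_U} gives $v\in\C_{U,A}$. Fixing any $U_0\in\Theta_A'$ and letting $v$ range over $\mathbb{P}(U_0)\subset\G$, this shows $\mathbb{P}(U_0)\subseteq\C_{U_0,A}$; since $\C_{U_0,A}$ is a closed subscheme of $\mathbb{P}(U_0)$ which is either a sextic curve or the whole plane, it must equal $\mathbb{P}(U_0)$. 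In particular such a $U_0$ exists, which is the assertion. (If one prefers to argue only with the generic fibre, one takes the open dense locus of $\G$ where $\dim q^{-1}(v)$ is minimal, and transfers density to a generic plane $\mathbb{P}(U_0)$ via the $\PP^2$-bundle $p$, using that the complement of this locus is a proper closed subset of the irreducible $I$ and so meets the generic fibre of $p$ in dimension $\leq 1<2$.)

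The step I expect to require the most care is the correct bookkeeping of the two dimension hypotheses and the justification that every fibre of $q$ is positive-dimensional: one needs $\dim\Theta_A'=k$ to compute $\dim I=k+2$ from the $\PP^2$-bundle structure, together with $\dim\G\leq k+1$, and then the theorem on fibre dimensions of dominant morphisms of irreducible varieties. A secondary point to verify is that the members $U$ of a positive-dimensional fibre of $q$ give genuinely distinct planes—immediate, since $U$ is recovered from $\mathbb{P}(U)$—so that Lemma~\ref{property-SingC_U} applies to an honest pair $U\neq U'$.
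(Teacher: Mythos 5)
Your proof is correct and takes essentially the same route as the paper's: both rest on the dimension count $\dim(\alpha^{-1}(\Theta_A'))_{red}=k+2>k+1\geq\dim\G$ for the incidence variety of the family of planes (your $I$ is exactly $(\alpha^{-1}(\Theta_A'))_{red}$), combined with the fact from Lemma~\ref{property-SingC_U} that a point lying on two distinct planes of the family lies on the corresponding curves $\C_{U,A}$. The only (cosmetic) difference is that the paper argues by contradiction --- if all $\C_{U,A}$ were curves, $\pi$ would be generically injective on a $(k+2)$-dimensional irreducible variety with image of dimension $\leq k+1$ --- whereas you apply the fibre-dimension theorem directly to conclude that every fibre of $q$ is positive-dimensional (just make sure, when $v\in\PP(U_0)$, to take the pair $\{U_0,U'\}$ with $U'\neq U_0$ in the fibre, so that the lemma yields $v\in\C_{U_0,A}$ rather than merely $v\in\C_{U,A}$ for some other $U$).
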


\begin{proof}
 First, $(\alpha^{-1}(\Theta_A'))_{red}$ is irreducible of dimension $k+2$, so
the image $\G$ is irreducible. Suppose it
has dimension $\leq k+1$ and  all the $\C_{U,A}$ are curves (outside these curves the fibers of $\pi$ are points). Then there
exists an open set $\mathcal{U}\subset (\alpha^{-1}(\Theta_A'))_{red}$ such that $\pi|_{\mathcal{U}}$
is $1:1$ onto a proper subset of $\G$, a contradiction since
$(\alpha^{-1}(\Theta_A'))_{red}$ is irreducible.
 \end{proof}
 
Since $\dim \G\leq \dim S_A \leq 4$ and $\dim \Theta_A\geq 3$, we infer that $X'\subset\PP(W)$ has to contain a
plane, contrary to  condition \textbf{O}.

\subsubsection{$\dim \Theta_A=2$}
 The strategy in this case is to show that in many cases the support $\C_{U,A}\subset \PP(U)$
has degree $\geq 4$. Then we apply several times the following:

\begin{lemm}\label{cubic} If  $\PP(U)\cap X'\subset \PP(W)$ has dimension $1$
then it supports a cubic curve.
\end{lemm}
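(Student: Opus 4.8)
The plan is to read the plane section $\PP(U)\cap X'$ off directly from the symmetric resolution of $\varphi_{\ast}(\oo_X)$ built in the proof of Theorem~\ref{EPW}, exploiting that $U\in\Theta_A$ forces the EPW block of that resolution to be everywhere degenerate along $\PP(U)$. Concretely, I would restrict the $11\times 11$ symmetric presentation
\[
\oo_{\PP^5}(-6)\oplus A\otimes\oo_{\PP^5}(-3)\xrightarrow{\,F\,}\Omega^2_{\PP^5}\oplus\oo_{\PP^5}
\]
of (\ref{Eq X}) to the plane $\PP(U)=\pi(\alpha^{-1}(U))$ and compute $\det F|_{\PP(U)}$, which is exactly the degree-$12$ form cutting out $\PP(U)\cap X'$ as a plane curve. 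The hypothesis $\dim(\PP(U)\cap X')=1$ means this form is not identically zero on $\PP(U)$.

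The core is a linear-algebra factorisation. Since $\PP(U)\subset S_A$ we have $\det f|_{\PP(U)}\equiv 0$ for the EPW block $f\colon A\otimes\oo\to\Omega^2_{\PP^5}(3)$ (with $\det f=s_A$), and because $U\in F_v\cap\PP(A)$ for every $v\in\PP(U)$, the kernel of $f$ contains the constant vector $U$ all along $\PP(U)$; by Proposition~\ref{propC_(U,A)} (using condition \textbf{O}) this kernel is generically exactly $\langle U\rangle$. From $f\cdot\mathrm{adj}(f)=\det f\cdot I=0$ and the symmetry of $f$ one gets $\mathrm{adj}(f)|_{\PP(U)}=\mu\,U\,U^{t}$ for a form $\mu$ with $\{\mu=0\}=\C_{U,A}$ (the locus where $f$ has corank $\ge 2$, i.e. $\dim(F_v\cap\PP(A))\ge 1$ as in (\ref{CU})). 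Writing $F=\left(\begin{smallmatrix}f&b\\ b^{t}&c\end{smallmatrix}\right)$ and using the Schur identity $\det F=c\det f-b^{t}\mathrm{adj}(f)b$ together with $\det f|_{\PP(U)}\equiv 0$, I obtain the factorisation of forms on $\PP(U)\cong\PP^2$:
\[
\det F|_{\PP(U)}=-\,\mu\,(U\cdot b)^2 .
\]
As $\C_{U,A}$ is a sextic, $\mu$ has degree $6$, so comparison with $\deg\det F|_{\PP(U)}=12$ forces $s_3:=U\cdot b$ to be a cubic; the dimension-$1$ hypothesis gives $s_3\not\equiv 0$, so $\{s_3=0\}$ is a genuine plane cubic.

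It then remains to show that the whole support of $\PP(U)\cap X'=V(\mu)+2V(s_3)$ is this cubic, i.e. that $\C_{U,A}=V(\mu)$ is contained in $\{s_3=0\}$. For this I would identify $\{s_3=0\}$ with the trace on $\PP(U)$ of the determinantal cubic $E_U$ of Lemma~\ref{lemP2xP2}, and argue that each $v\in\C_{U,A}$ lies on it: the condition $\dim(F_v\cap\PP(A))\ge 1$ produces a line through $U$ inside $F_v\cap\PP(A)$, whose tangent direction at $U$ lands in $F_v\cap T_U\cap\PP(A)$, placing $v$ on the cone $\bigcup_v(F_v\cap T_U)$ over $E_U$; this is precisely the local description of O'Grady underlying Lemma~\ref{property-SingC_U}. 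Granting the inclusion, the support of $\PP(U)\cap X'$ is exactly the cubic $\{s_3=0\}$.

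The main obstacle is this last inclusion $\C_{U,A}\subseteq\{s_3=0\}$: the factorisation only places the support inside $\C_{U,A}\cup\{s_3=0\}$, and it is \emph{not} formal that the sextic component $\C_{U,A}$ drops into the cubic. Establishing it amounts to matching two a priori different cubics — the Schur cofactor cubic $s_3=U\cdot b$ arising from the extra row and column of $F$, and the determinantal cubic $E_U$ governing the intersections $F_v\cap T_U\cap\PP(A)$ — which is exactly the content of O'Grady's tangent-space analysis. I would isolate this identification as the one genuinely geometric input, the remainder being the computation of $\det F|_{\PP(U)}$ above.
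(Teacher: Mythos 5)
Your Schur--complement computation is correct and, in substance, reproduces the paper's construction: the cubic $s_3=U\cdot b$ is exactly the restriction to the plane $\alpha^{-1}(U)\cong\PP(U)$ of the divisor $D\in|3H+T|$ of Proposition~\ref{ciC} (on $\alpha^{-1}(U)$ the class $T$ restricts trivially and $H$ to $\oo_{\PP^2}(1)$, which is how the paper produces a cubic), and your reduction of the lemma to the single inclusion $\C_{U,A}\subseteq\{s_3=0\}$ is the right reduction. But that inclusion is a genuine gap in your argument, and the route you sketch for closing it does not work: for $v\in\C_{U,A}$, a line through $U$ inside $F_v\cap\PP(A)$ need not be tangent to $G(3,W)$ at $U$, so it need not lie in $T_U$; the condition $\dim(\PP(A)\cap F_v\cap T_U)\ge 1$ that you are implicitly invoking is item (2) in the definition of $\mathcal{B}(U,A)$, and by Lemma~\ref{property-SingC_U} it singles out only the \emph{singular} points of $C_{U,A}$, not the whole curve. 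Your tangent-space argument would therefore place only finitely many points of $\C_{U,A}$ on $\{s_3=0\}$, which is not enough.

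The inclusion does hold, but the needed input is Lemma~\ref{N3 5.1} (the symmetric rank condition $N_2=M_2$, together with the equality of radicals of $N_r$ and $M_r$ for $r\ge 2$), which you never use. If $x\in\C_{U,A}$ has $\dim\ker f_x=k\ge 2$, then $x\in M_k$, hence $x$ lies in the zero set of $\mathrm{Fitt}_k(\varphi_{\ast}\oo_X)$, i.e.\ the full $11\times 11$ matrix $F_x$ has rank at most $10-k$. Its first ten columns $\bigl(\begin{smallmatrix}f_x\\ b_x^{t}\end{smallmatrix}\bigr)$ then also have rank at most $10-k$, so $\dim(\ker f_x\cap\ker b_x^{t})\ge k=\dim\ker f_x$, forcing $\ker f_x\subseteq\ker b_x^{t}$ and in particular $s_3(x)=b_x^{t}(U)=0$. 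This yields $\C_{U,A}\subseteq\{s_3=0\}$ and completes your proof. For comparison, the paper does not factor the determinant at all: it restricts the divisor $D$ of Proposition~\ref{ciC} to $\alpha^{-1}(U)$ and uses that $X'\cap\PP(U)$ is supported on $C\cap\PP(U)$ (since $\PP(U)\subset S_A$ and $\varphi^{\ast}S_A$ is the conductor divisor, so $S_A\cap X'$ is supported on $C$); the fact that $D\cap\alpha^{-1}(U)$ maps \emph{onto} $C\cap\PP(U)$ rests on the same rank condition, so the determinantal identity you derive is a useful explicit counterpart, not a shortcut around that step.
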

\begin{proof} If $\dim \Theta_A\leq 1$ then the assertion is a consequence of Proposition
\ref{ciC}. If $\dim \Theta_A=2$ similar arguments apply:
For a fixed $U\in \Theta_A'$ the plane $\alpha^{-1}(U)\subset
\PP(\Omega_{\PP^5}^3(3))$ is a plane that maps under $\pi$ to $\PP(U)$.
On the other hand,  $\alpha^{-1}(U)$ is contained in $\PP(10\oo_{\PP^5})$
such that $\pi^{\ast}(\oo_{\PP^5}(1))$ is equal to the pull-back of
$\oo_{\PP^5}(1)$ on $\PP(10\oo_{\PP^5})$ and
$$\mathcal{O}_{(10\mathcal{O}_{\mathbb{P}^5})}(1)|_{\alpha^{-1}(U)}=\mathcal{O}_{
(\Omega^3_{\mathbb{P}^5}(3))}(-1)|_{\alpha^{-1}(U)}.$$ Thus we can conclude as in
Proposition \ref{ciC}.
\end{proof}

O'Grady observed also that we can apply the Morin theorem \cite{M}.
 Indeed, if $\Theta'_A$ is an irreducible component of $\Theta_A$ of dimension
$\geq 1$ then it parameterizes mutually intersecting planes in $\PP(W)$.  By the Morin
theorem,  
 $\Theta'_A$ is then a linear section of one of the following sets:
\begin{enumerate}
\item $\mathbb{P}^3$ embedded in $G(3,W)\subset \mathbb{P}(\bigwedge^3 W)$ by the double
Veronese embedding,
 \item $G(2,5)\subset F_v \subset G(3,W)$ embedded by the closures of fibers of $\pi_1$,
 \item $G(2,5)\subset F'_v\subset G(3,W)$ embedded by the closures of fibers of $\pi_2$,
\item $T_P\cap G(3,W)$ where $T_P$ is the projective tangent space at $P$ to
$G(3,W)\subset \mathbb{P}(\bigwedge^3 W)$,
\item $\mathbb{P}^2$ embedded in $G(3,W)\subset \mathbb{P}(\bigwedge^3 W)$ by
the triple Veronese embedding.
\end{enumerate}
 In order to complete the proof of Proposition \ref{og-conj} we check case by case the possible two-dimensional irreducible
components $\Theta'_A$ of $\Theta_A$ and find that either:
\begin{itemize}\item[(I)] the adjoint EPW sextic $S_A$ is a double determinantal cubic, or
\item[(II)] the EPW sextic $S_A\subset \PP(W)$ has a non-reduced component supported on a
hyperplane.
\end{itemize} 
In  case (I),  $\Theta'_A$ is the third Veronese embedding of $\mathbb{P}^2$ in $G(3,W)\subset \PP(\bigwedge^3W)$.
Case (II) happens for example when $\Theta_A'$ is a
plane.
Note that by Lemma \ref{lemdim} we can assume that $\G$ is a hypersurface of
degree $\leq 3$ (because $\G$ is a non-reduced component of $S_A$).
Let us study using  Lemma \ref{cubic} each case of the Morin theorem separately:

\medskip
\noindent
\textbf{Case (1)} From Lemma \ref{lemdim} we deduce that $\Theta_A'$ is a
hyperplane section of the double Veronese embedding of $\mathbb{P}^3$ (this is
the only possibility because there are no planes contained in this double
Veronese).
It follows from \cite[Claim 1.14]{O2} that $\G=(\pi(\alpha^{-1}(\Theta_A')))_{red}$ is a smooth quadric, and we
have the following:
 \begin{itemize}\item from
\cite[Prop.~2.1]{O5} it follows that $\G$ has multiplicity $2$ in the EPW sextic
$S_A$ (thus $S_A$ can be written in the  form $2\G+R$ where $R$ is a
quadric),
\item  $R\cap \G$ is contained in the sum of
$\C_{U,A}$ for $U\in \Theta_A'$ (because the sextic can be more singular only along such curves),
\item the restriction of $\pi\colon(\alpha^{-1}(\Theta_A'))_{red}\to \G$
is the blow-up of a plane $F$ contained in $\G\cap R$ ($\alpha^{-1}(\Theta_{A}')\to \Theta_A'$ is the restriction of 
$\PP(\Omega^1_{\PP^3}(2))\to \PP^3$ and $\pi|_{\alpha^{-1}(\Theta_{A}')} $ is given by the system $\oo_{\alpha^{-1}(\Theta_{A}')}(1)$).
 \end{itemize}  
Since the curves $\C_{U,A}$ cover $F$, we have $F\subset X'$.
Since each curve $\C_{U,A}$ is contained in $X'$, this  contradicts
condition \textbf{O}.

\medskip
\noindent
\textbf{Case (2)}  The
planes parameterized by $\Theta_A'$ contain the point $v$ and are defined by a
line $l_p\subset
G(2,V/[v])$.
Using \cite[Prop.~2.31]{O2} we deduce that $\Theta_A'$ is either
 \begin{itemize}
 \item[(a)] a plane or $\Theta_A'\subset G(2,T)\subset G(2,5)$ where $T\in G(4,5)$,
or
 \item[(b)] $\Theta_A'$ is a linear section of $G(2,5)$ which is a del Pezzo surface,
or
 \item[(c)] there is a line $l_0\subset \PP(V/[v])$ that intersects all the lines
$\PP(V/[v])$ parameterized by $\Theta_A'$.
 \end{itemize} 
We shall treat each case separately.

Assume (a); then the planes parameterized by $\Theta_A'$ cover a
hyperplane. This hyperplane has to be a
multiple component of $S_A$, so we are in case (II).

Assume (b), so that $\Theta_A'$ is a linear section of $G(2,5)\subset F_v$.
Then $\Theta_A'$ is a possibly singular del Pezzo surface $D_5$ of degree $5$
(observe that $D_5$ cannot be reduced if it has one component because of the
degree). Then the sum of the planes parameterized by $\Theta_A'$ is a cone over a
cubic hypersurface; denote it by $\mathbf{Q}$. More precisely, these planes are
spanned by the lines corresponding to points on $D_5\subset G(2,5)$ (the sum of
these lines is a cubic threefold, denote it by $\mathbf{Q}'\subset \PP(V/[v])$).
It follows that the corresponding EPW sextic is a double cubic.
Since $\dim (\PP(A)\cap F_v)=5$, it follows from \cite[Prop.~3.1.2]{O4} and
\cite[Claim~3.2.2]{O4} that $v$ is a point of multiplicity $6$ on
$C_{U,A}$ for $U\in D_5$.
Thus $C_{U,A}$ is a sum of multiple lines passing through $v$ (if it is the
whole plane we obtain a contradiction).

Let us now identify the sets $\mathcal{B}(U,A)$ in order to prove that $C_{U,A}$
has to be reduced for a generic $U\in D_5$.
Let us fix such a generic point $U$ of $D_5$; then 
$\mathbb{P}(A)\cap T_{U,G(3,W)}$ has dimension~$2$. Moreover, $\dim (F_v\cap
\mathbb{P}(A)\cap T_{U,G(3,W)})=2$ because this space contains the tangent space
to the del Pezzo surface $D_5\subset F_v$ and is contained in the previous
intersection. It also follows
that the set of $w\in \mathbb{P}(U)$ such that 
$$\dim(\mathbb{P}(A)\cap F_w \cap
T_{U,G(3,W)})\geq 1
$$ 
is the singleton $\{v\}$.
Since $D_5$ is irreducible of
dimension $2$, we infer that $U$ does not belong to any line on $D_5\subset \mathbb{P}^5$
 (such lines cannot cover the whole $D_5$).
Thus  for $U'\in D_5- \{ U \}$ we have $\mathbb{P}(U')\cap
\mathbb{P}(U)=\{v\}$.

So  $\mathcal{B}(U,A)$ is  the 
sum of the intersections
$\mathbb{P}(U)\cap \mathbb{P}(V_0)$ where $V_0\in \Theta_A-D_5$ and $\{ v\}$.

For a fixed $V_0$, $\mathbb{P}(V_0)$ intersects $\mathbb{P}(U)$
outside $v$
(because $F_v\cap G(3,W)=G(2,5)$) and from Lemma \ref{property-SingC_U} in
one point (since $\C_{U,A}$ is a sum of lines passing through $v$).
Since the plane $\mathbb{P}(V_0)$ has to be contained in our cubic hypersurface
$S$, the set $\C_{V_0,A}$ must be the whole $\mathbb{P}(V_0)$.

It follows that $C_{U,A}$ is a reduced sum of six lines for a generic choice of
$U\in D_5$. We deduce that for each such $V_0$ we have $\C_{V_0,A}=\PP(V_0)$, contradicting condition \textbf{O}.

Assume (c); then $\Theta_A'$ is a linear section of the cone with vertex $U_0$
over the Segre embedding $\mathbb{P}^1\times\mathbb{P}^2$.
The planes parameterized by points in $\Theta_A'$ are spanned by the point $v$
and a line in $\mathbb{P}(V/[v])$. 
More precisely, the line in $\PP(V/[v])$ is described as follows: the first factor of $\mathbb{P}^1\times\mathbb{P}^2$ corresponds to a
choice of a point on the line $l_0$, and the second factor corresponds to a
choice of a plane containing $l_0\subset \mathbb{P}^4$; finally the directrix of our
cone with vertex $U_0$ gives a choice of a line on this plane passing through our point.

We will obtain a contradiction by showing that $\mathbb{P}(U_0)$ must be
contained in $X'$. Thus it is enough to show that the sum of the curves $\C_{U,A}$
for $U\in \Theta_A'$ covers the line $l_0$.
By Lemma \ref{property-SingC_U} it is enough to prove that for each point of
$l_0$ there are at least two lines parameterized by $\Theta_A'$ that contain
this point.

 If $\Theta_A'$ contains $U_0$ then it is a cone and we obtain a
contradiction unless $\Theta_A'$ is a plane spanned by $U_0$ and a line
contained in the second factor of $\mathbb{P}^1\times\mathbb{P}^2$. Indeed, the
planes in $\mathbb{P}(W)$ parameterized by the point from $\Theta_A'$ intersect
in this case along a line spanned by $v$ and the fixed point from $l_0$ and
cover a hyperplane.

If $\Theta_A'$ does not contain the vertex $U_0$, we obtain a
contradiction similarly unless the image of the projection 
$$
\PP^1\times \PP^2\supset \Theta_A'\to \mathbb{P}^1
$$ 
is
a point.
Suppose the the image of the projection above is a point that we denote by $Q_0$. Then $\Theta_A'$ is a plane. Next the planes parameterized by
$\Theta_A'$ pass through a line $l$ (determined by $v$ and $Q_0$) and cover a
hyperplane $H_0$ which is a non-reduced component of $S_A$, so we are in case (II).

\medskip
\noindent
\textbf{Case (3)} Suppose that $G(2,5)$ is equal to  $F_v\cap G(3,W)$ for some $v\in W$.
This embedding is given by choosing a point $L\in G(5,W)$ that gives a natural
embedding $G(3,L)\subset G(3,W)$.
In this case the sum of the planes corresponding to points in $\Theta_A'$ is
contained in the hyperplane $\mathbb{P}(L) \subset \mathbb{P}(W)$.
By Lemma \ref{lemdim} we can assume that this sum covers $\mathbb{P}(L)$. It follows from 
\cite[Cor.~1.5]{O2}
that $S_A$ has a non-reduced linear component; so we are in case (II).

\medskip
\noindent
\textbf{Case (4)} Then from Lemma \ref{lemP2xP2} the component $\Theta_A'$ is a 
two-dimensional linear section of the cone over $\mathbb{P}^2 \times
\mathbb{P}^2$ in $\mathbb{P}^9$ with vertex $U_0$. It is useful to have in mind
the description of the family of planes parameterized by $\Theta_A'\subset \PP^2\times \PP^2$:

\begin{lemm}
 Geometrically the first factor of $\mathbb{P}^2 \times \mathbb{P}^2$
corresponds to a choice of a line in $\mathbb{P}(U_0)$ and the second factor to the
choice of
a $\mathbb{P}^3$ containing $\mathbb{P}(U_0)$. The directrix of the cone
corresponds to planes containing the fixed line in a fixed $\mathbb{P}^3$.
\end{lemm}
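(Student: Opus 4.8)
The plan is to pick linear coordinates adapted to $U_0$ and read the two rulings of the Segre cone straight off the Pl\"ucker embedding. Fix a splitting $W=U_0\oplus U_0'$ with $U_0=\langle e_1,e_2,e_3\rangle$, so that $U_0=[e_1\wedge e_2\wedge e_3]\in G(3,W)$ and $U_0'\cong W/U_0$. First I would recall the identification of the affine tangent space: differentiating the Pl\"ucker parametrization $\langle e_1+\sum_i a^1_i f_i,\ e_2+\sum_i a^2_i f_i,\ e_3+\sum_i a^3_i f_i\rangle$ at the origin shows that the projective tangent space $T_{U_0}$ to $G(3,W)$ is $\PP$ of $\langle e_1\wedge e_2\wedge e_3\rangle\oplus\bigl(\textstyle\bigwedge^2 U_0\wedge U_0'\bigr)$, the tangent directions being the $9$ vectors $\sigma\wedge f$ with $\sigma\in\bigwedge^2 U_0$ and $f\in U_0'$. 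This matches $T_{U_0}G(3,W)=\Hom(U_0,W/U_0)\cong\bigwedge^2 U_0\otimes(W/U_0)$. Under this isomorphism the Segre variety $\PP^2\times\PP^2$ of Lemma \ref{lemP2xP2} is the locus of rank-one tensors $\omega\otimes f\leftrightarrow[\omega\wedge f]$, with factors $\PP(\bigwedge^2 U_0)$ and $\PP(W/U_0)$, and vertex $[e_1\wedge e_2\wedge e_3]=U_0$.

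Then I would make the dictionary explicit by putting a general cone point in normal form. Such a point is $[\lambda\,e_1\wedge e_2\wedge e_3+\omega\wedge f]$ with $\omega\in\bigwedge^2 U_0$ and $f\in U_0'$ nonzero. Since $\dim U_0=3$ every $\omega$ is decomposable, $\omega=u_1\wedge u_2$; completing to a basis $u_1,u_2,u_3$ of $U_0$ and writing $e_1\wedge e_2\wedge e_3=c\,u_1\wedge u_2\wedge u_3$ $(c\neq 0)$ gives
\[
\lambda\,e_1\wedge e_2\wedge e_3+\omega\wedge f=u_1\wedge u_2\wedge(\lambda c\,u_3+f).
\]
This is the Pl\"ucker point of the honest $3$-plane $U'=\langle u_1,u_2,\lambda c\,u_3+f\rangle$ (so the cone point lies on $G(3,W)$, as it must by $K_{U_0}=T_{U_0}\cap G(3,W)$), and $U'\cap U_0=\langle u_1,u_2\rangle$ since $\lambda c\,u_3+f$ has nonzero image in $W/U_0$. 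Reading off the two projections: the class $[\omega]\in\PP(\bigwedge^2 U_0)$ records the line $\ell=\langle u_1,u_2\rangle\subset\PP(U_0)$ --- the first factor; and $[f]\in\PP(W/U_0)$ records the $4$-dimensional space $U_0\oplus\langle f\rangle$, i.e.\ the $\PP^3$ with $\PP(U_0)\subset\PP^3\subset\PP(W)$ --- the second factor.

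For the directrix I would fix the Segre point, i.e.\ fix both $\omega$ and $f$, and let $\lambda$ vary: the family $\langle u_1,u_2,\lambda c\,u_3+f\rangle$ is precisely the line of the cone joining the vertex $U_0$ (attained as $\lambda\to\infty$) to $[\omega\wedge f]$; every member contains the fixed line $\ell$ and lies in the fixed $\PP^3=\PP(U_0\oplus\langle f\rangle)$, and conversely the planes through $\ell$ inside that $\PP^3$ form a one-parameter pencil. Hence each directrix is exactly the pencil of planes containing the fixed line $\ell$ inside the fixed $\PP^3$, which is the assertion.

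The computation is elementary, so there is no genuine obstacle; the only points needing care are bookkeeping --- keeping straight that $\PP(\bigwedge^2 U_0)$ is the factor of lines in $\PP(U_0)$ while $\PP(W/U_0)$ is the factor of three-spaces through $\PP(U_0)$ --- and the one structural observation that the automatic decomposability of $\omega$ (valid because $\dim U_0=3$) is what converts a tangent vector into an actual member of $G(3,W)$.
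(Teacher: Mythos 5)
Your proof is correct. Note that the paper itself gives no proof of this lemma: it is stated as a description to ``have in mind,'' in the same spirit as Lemma \ref{lemP2xP2}, which is explicitly called ``a nice exercise.'' Your argument supplies exactly the missing verification, and it does so by the natural route: splitting $W=U_0\oplus U_0'$, identifying the affine tangent cone at $U_0$ with $\bigwedge^3U_0\oplus\bigl(\bigwedge^2U_0\wedge U_0'\bigr)$ so that the Segre directions are the rank-one tensors $\omega\otimes f$, and then using the automatic decomposability of $2$-forms on the $3$-space $U_0$ to rewrite a general cone point as $u_1\wedge u_2\wedge(\lambda c\,u_3+f)$. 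That single identity simultaneously shows the point lies on $G(3,W)$, that $[\omega]\in\mathbb{P}(\bigwedge^2U_0)$ records the line $\mathbb{P}(\langle u_1,u_2\rangle)\subset\mathbb{P}(U_0)$, that $[f]\in\mathbb{P}(W/U_0)$ records the $\mathbb{P}^3=\mathbb{P}(U_0\oplus\langle f\rangle)$, and that varying $\lambda$ along a ruling line of the cone sweeps out precisely the pencil of planes through that line inside that $\mathbb{P}^3$ (with the vertex $U_0$ as the member at $\lambda=\infty$). The only point that genuinely needed care --- that decomposability of $\omega$ is what turns a tangent vector into an honest element of $G(3,W)$ --- is the one you flag, so the write-up is complete.
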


Suppose first that $\Theta_A'$ contains the vertex of the cone $U_0\in G(3,W)$.
Then the plane $\mathbb{P}(U_0)$ is covered by the intersection with other
planes corresponding to points from $\Theta_A'$ unless $\Theta_A'$ maps to a
point under the projection $\PP^2\times \PP^2 \supset \Theta_A'\to \mathbb{P}^2$. Thus, in the first case, we
obtain a contradiction from Proposition \ref{propC_(U,A)}. But in the second
case we see that $\Theta_A'$ is a plane; then we are in Case (2) that was
described before.

We can assume that $\Theta_A'$ does not contain the vertex of the cone so we can
use \cite[Prop.~2.33]{O2}. We want to obtain a contradiction by showing that
$\mathbb{P}(U_0)\subset X'$. For this it is enough to see that the sum of the
curves $C_{U,A}$ for $U\in \Theta_A'$ contains $\mathbb{P}(U_0)$.
Consider the projections to the factors $\mathbb{P}^2\leftarrow \Theta_A'\to
\mathbb{P}^2$ (recall that $\Theta'_A\subset \PP^2\times \PP^2$ ). Since by Lemma \ref{property-SingC_U} the intersection of two
planes $\mathbb{P}(U)$ and $\mathbb{P}(V)$ is contained in the curve $C_{U,A}$, we obtain a contradiction when the dimensions of the images of
both projections have dimension $\geq 1$. The remaining case is when
$\Theta_A'={v}\times \mathbb{P}^2$, where $v$ corresponds to a fixed line in
$\mathbb{P}(U_0)$. But then we are in Case (2).

\medskip
\noindent
\textbf{Case (5)} We assume that $\Theta'_A$ is the triple Veronese
embedding of $\mathbb{P}^2$. Then from \cite[Claim 1.16]{O2} we know that  $\G=(\pi(\alpha^{-1}(\Theta_A')))_{red}$
is the secant cubic of the Veronese
surface in $\mathbb{P}^5$. It follows from \cite[\S 4.4]{O4}
that for all $U\in \Theta'_A$ the set $C_{U,A}$ is a triple smooth
conic. Consider the restriction $\mathcal{E}_{\Theta}\to \Theta'_A$
of the tautological bundle on $G(3,W)$. In this case we obtain
$\mathcal{E}_{\Theta}=S^ 2\Omega^1_{\mathbb{P}^2}(1)$ and the
following diagram:
\[\mathbb{P}(\Omega^3_{\mathbb{P}^5}(3))\supset \mathbb{P}(S^
2\Omega^1_{\mathbb{P}^2}(1))\xrightarrow{f} \Theta'_A\subset
\mathbb{P}(\textstyle\bigwedge^3W)\]
$$\ \ \downarrow \pi \ \ \ \ \ \ \ \ \ \ \ $$
$$\mathbb{P}^5\supset \G \ \ \ \ \ \ \  \ \ \ \ \ \ $$
The system of quadrics containing the Veronese surface gives the Cremona
transformation  
\begin{equation}\label{diag2}
\begin{array}{ccccc}
  \mathbb{P}^5 &&\dashleftarrow&& \mathbb{P}^5\\
&\nwarrow^{c_1}&&\ \ \ \nearrow_{c_2}&\\
&& \mathbb{K}&&
\end{array}
\end{equation} 
where $c_1$ and $c_2$ are the blow-ups of the Veronese surface
$V_i\subset\mathbb{P}^5$ for $i=1,2$ respectively. Then the exceptional
 divisor $\mathbb{E}$ of $c_1$ maps under $c_2$ to the determinantal cubic
singular along $V_2$. Moreover, the exceptional divisor $F$ of the induced map
$\mathbb{E}\to \G$ is naturally isomorphic
to the projective bundle $\mathbb{P}(\Omega^1_{V_2}(1))$.
We also see that $\pi|_{\mathbb{P}(S^ 2\Omega^1_{\mathbb{P}^2}(1))}$ can be seen
as the blow-up of $\G$ along its singular locus, thus we can identify it with
$c_2|_{\mathbb{E}}$.

We deduce from the diagram (\ref{diag2}) that  $(2H-F)=2B$ on
$\mathbb{P}(S^2\Omega^1_{\mathbb{P}^2}(1))$ where $B$ (resp.~$H$) is the pull-back of the hyperplane from $\mathbb{P}^2=\Theta_A'$ (resp.~$\mathbb{P}^5$).
 The linear system $|3H+T|$ can be seen on $\mathbb{E}$ as $|3H+3B|$.
By Proposition \ref{N3 5.1} we infer that $3H+3B- F$ is effective, so it is an
element of $|H+5B|$.

We can go in the other direction: choose an element from $|H+5B|$, map it to $\G$
and choose a hypersurface of degree $12$ singular along the image.
 Since the conductor locus is non-reduced, the singularities of this hypersurface have
to have generically tacnodes (see \cite[\S 4.4]{Re}) along the intersection with
$S_A$. This can lead to a possible counterexample to the O'Grady conjecture.

\begin{rem}
 Let us describe more precisely the EPW sextic $S_A$ in the missing cases when
$\Theta_A'$ is a plane.
First observe that if $\Theta_A'$ is a plane then it is contained in the tangent
space to $G(2,5)\subset F_v$ at one of its points; we can thus assume that we
are in  case (c) above.
  In this case $S_A$ is singular along a hyperplane $H_0$ which is a multiple
component such that there is a line $l\subset H_0$ contained in all the planes
$\PP(U)$ for $U\in \Theta_A'$. By Lemma \ref{property-SingC_U} the line
$l\subset H_0$ is also contained in all the curves $C_{U,A}$ for $U\in
\Theta_A'$. Moreover, the divisor $D\in |3H+T|$ from Proposition \ref{ciC} intersects
$\G=\alpha^{-1}(\Theta_A')_{red}$ (this is just the blow-up of $H_0$ along $l$)
along a divisor in the system $|4H-2E|+E$. So there is a
quartic on $H_0$ singular along $l$ that defines set-theoretically the
intersection of $H_0$ with the scheme $C$ defined by the conductor. So we can
describe the situation
(in the generic case) as follows: the EPW sextic is decomposable $2H_0 \cup Q$ such that
$Q$ is a quartic intersecting the hyperlpane $H_0$ along a quartic. The above quartic is singular along $l$.
Moreover, the intersection $H_0\cap Q$ supports the singular locus of $C\subset X' =\varphi (X)\subset \PP^5$.
Since $C$ has multiplicity $3$ at a generic point of the image,
 the hypersurface $X'\subset \PP^5$
has multiplicity $3$ along  $C$ and the singularities along $C$ are worse than
ordinary triple points (see \cite[\S 4.4]{Re}).
\end{rem}

\subsection{The case when  $\dim \Theta_A=1$}\label{Theta1} 

The aim of this section is to show
that the adjoint EPW sextic from Theorem \ref{EPW} cannot correspond to a
generic $A$ with $\Theta_A$ of dimension~$1$, i.e.~such that $\Theta_A$ is a line
(with some more conditions).
Following \cite[\S 2]{O2} we set
$$
\G = \Big(\bigcup_{P\in\Theta_A}\mathbb{P}(P)\Big)_{red},
$$ 
and we denote by $\mathcal{E}_{\Theta_A}\to \Theta_A\subset G(3,W)$ the
restriction of the tautological bundle from $G(3,W)$ and by
$f_{\Theta_A}\colon\mathbb{P}(\mathcal{E}_{\Theta_A})\to R_{\Theta_A}$ the
tautological surjective map. Observe that there is a natural
embedding of $\mathbb{P}(\mathcal{E}_{\Theta_A})$ in
$\mathbb{P}(\Omega_{\mathbb{P}^5}^3(3))$ (in fact into the exceptional
set $E\subset \PP(\Omega_{\PP^5}^3(3))$ described in Remark \ref {alpha}). The divisor $D\in |3H+T|$ (that maps to the conductor locus  $C \subset \PP(W)$) intersects
$\mathbb{P}(\mathcal{E}_{\Theta_A})$ along an effective divisor $D'$
that we shall analyze.

Suppose that $\Theta_A'$ is an irreducible component of $\Theta_A$.
O'Grady applied the Morin theorem to show that $1\leq\deg(\Theta_A')\leq 9$.
He also presented in \cite[Table 2]{O2} the precise description the corresponding curves
and of the corresponding three-dimensional 
sets $\G$.

If $\deg \Theta_A=1$ then $\Theta_A$ is a line that we denote by $t$. Then the variety
$\G$ is a $3$-dimensional linear space containing a line $l$
such that the exceptional divisor $E'$ of $f_{\Theta}$ (in fact
$f_{\Theta}$ is the blow-up along $l$) maps to $l$. We compute that
on $\mathbb{P}(\mathcal{E}_{\Theta})$ we have $T=H-E'$ so $D'=4H-E'$.
Since the planes $\mathbb{P}(P)\subset \PP(W)$ contain $l$ and $\C_{P,A}\subset \PP(P)$ 
cannot be a plane, we deduce that the image of $D'$ on $\G$ is
an irreducible quartic containing $l$ or a sum of two quadrics (if there is a
plane component we obtain a contradiction with \textbf{O} because this component has to be contained in
$X'\subset \PP(W)$).

On the other hand, let us analyze the reduced sum $\mathcal{Z}\subset
\G$ of the curves $C_{P,A}\subset\mathbb{P}(P)$ for $P\in
\Theta_A$. As observed before, we have $\mathcal{Z}\subset \supp D'$. Observe
that generically $C_{P,A}$ is a sum of a reduced quartic and a double
line $l$, so we obtain a contradiction in this case. The problem is
the special choices of $A$. There are a lot of possibilities; we hope to consider
them in a future work.

\section{Appendix}\label{App}

Let $W$ be a $6$-dimensional vector space. The exterior product defines a
symplectic form on the 20-dimensional vector space $\bigwedge^3 W$. The natural action of $PGL(W)$ on
$\mathbb{P}(\bigwedge^3 W)$ has four orbits
$\mathbb{P}(\bigwedge^3 W) \setminus O_1$, $O_1\setminus O_2$, $O_2
\setminus O_3$ and $O_3$, where $O_1\supset O_2\supset O_3$ are subvarieties of
dimensions $18$, $14$, and $9$.
Moreover, it is known that $O_3=G(3,W)$, $O_1$ is a quartic described in
\cite[Lem.~3.6]{Do} and $O_2$ (resp.~$O_3$) is the singular locus of $O_1$
(resp.~$O_2$). In this paper we are only interested in the orbits $O_3\subset O_2$.

The locus $O_2\subset \mathbb{P}(\bigwedge^3 W)$ can be seen as the set of points
lying on more than one chord of $G(3,W)\subset \PP(\bigwedge^3 W)$ (see \cite[Lem.~3.3]{Do}) or as the union
of 
all
spaces spanned by some $G(3,N)$ for $N\subset W$ of dimension 5, which is
equal to the union of 
all
 spaces spanned by some flag variety $F(p,3,N)$ for some $p\in W$. With
this interpretation we get a description of $O_2$ as the set of $3$-forms
$$
\{[\alpha \wedge \omega] \in \mathbb{P}(\textstyle\bigwedge^3 W) \ | \ \alpha \in W,\
\omega \in
\textstyle\bigwedge^2 W\}.
$$
It follows that there are two natural fibrations of $\pi_1,\pi_2 \colon
O_2\setminus O_3\rightarrow \mathbb{P}^5$ such that the closures of the fibers
are $9$-dimensional linear spaces. More precisely, $\pi_1$ is defined as the map
$$
O_2\setminus O_3\ni [ \alpha \wedge \omega] \mapsto [ \alpha] \in \mathbb{P}(W)
$$ 
and
$\pi_2$ is the map 
$$
O_2\setminus O_3\ni [\alpha \wedge \omega] \mapsto
[\alpha\wedge\omega\wedge\omega]  \in \mathbb{P}(W^{\vee}).
$$

\begin{lemm}\label{well-defined} 
The maps $\pi_1$ and $\pi_2$ are well defined on $O_2 \setminus
O_3$.
\end{lemm}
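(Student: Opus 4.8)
The maps $\pi_1$ and $\pi_2$ are a priori defined on \emph{decompositions} $\eta=\alpha\wedge\omega$ rather than on $O_2\setminus O_3$ itself, so the content of the lemma is that the values $[\alpha]\in\PP(W)$ and $[\alpha\wedge\omega\wedge\omega]\in\PP(W^{\vee})$ depend only on the projective class $[\eta]$ and not on the chosen $\alpha,\omega$. The plan is to give intrinsic descriptions of both quantities. For $\pi_1$ the natural candidate is the kernel
\[
K(\eta):=\{v\in W \mid v\wedge\eta=0\}\subset W,
\]
and since $\alpha\wedge\eta=\alpha\wedge\alpha\wedge\omega=0$ we always have $\alpha\in K(\eta)$. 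The whole point is therefore to show that $K(\eta)$ is \emph{one}-dimensional for every $[\eta]\in O_2\setminus O_3$, so that $K(\eta)=\langle\alpha\rangle$ and $\pi_1([\eta])=[K(\eta)]$ is manifestly intrinsic.

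To compute $K(\eta)$ I would put $\eta$ in normal form. Writing $\eta=\alpha\wedge\omega$ and decomposing $\omega=\alpha\wedge\beta+\omega_0$ relative to a splitting $W=\langle\alpha\rangle\oplus U$, with $\beta\in U$ and $\omega_0\in\bigwedge^2 U$, one has $\eta=\alpha\wedge\omega_0$, so we may assume $\omega$ is a $2$-form on the $5$-dimensional space $U$. Its rank is even, hence $0$, $2$, or $4$: rank $0$ gives $\eta=0$; rank $2$ makes $\eta$ decomposable, i.e.\ $[\eta]\in G(3,W)=O_3$, which is excluded. Thus on $O_2\setminus O_3$ the form $\omega_0$ has rank exactly $4$, and after a change of basis $\eta=e_1\wedge(e_2\wedge e_3+e_4\wedge e_5)$ with $\alpha=e_1$. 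Expanding $v\wedge\eta$ in the standard basis of $\bigwedge^4 W$ then forces every coordinate of $v$ other than the $e_1$-coordinate to vanish, giving $K(\eta)=\langle e_1\rangle$. This dimension count is the technical heart of the argument; once it is in place the well-definedness of $\pi_1$ is immediate.

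For $\pi_2$ I would use that $\alpha$ is now pinned down up to scale by the previous step, so the only remaining freedom is $\alpha\mapsto c\alpha$, $\omega\mapsto c^{-1}\omega+\alpha\wedge\gamma$ (together with an overall rescaling of $\eta$), where $\gamma\in W$ is arbitrary; indeed $\alpha\wedge\omega=\alpha\wedge\omega'$ forces $\omega-\omega'\in\ker(\alpha\wedge-)=\alpha\wedge W$ by exactness of the Koszul complex for $\alpha\neq 0$. Substituting, every correction term in $\alpha\wedge(c^{-1}\omega+\alpha\wedge\gamma)\wedge(c^{-1}\omega+\alpha\wedge\gamma)$ that involves $\gamma$ contains a repeated factor $\alpha\wedge\alpha$ and hence vanishes, so $\alpha\wedge\omega\wedge\omega$ is multiplied only by the scalar $c^{-1}$. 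Therefore $[\alpha\wedge\omega\wedge\omega]\in\PP(W^{\vee})$ is independent of all choices, and $\pi_2$ is well defined on $O_2\setminus O_3$. If an explicit representative is wanted, one may take $\omega=\iota_\xi\eta$ for any $\xi\in W^{\vee}$ with $\xi(\alpha)\neq 0$, since $\iota_\xi(\alpha\wedge\omega)=\xi(\alpha)\omega$ modulo $\alpha\wedge W$. The main obstacle is the exact determination of $\dim K(\eta)$ in the second step; everything else is formal manipulation in the exterior algebra.
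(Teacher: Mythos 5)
Your proof is correct. It takes a slightly different, more structured route than the paper's: for $\pi_1$ you identify $[\alpha]$ intrinsically as the projectivized kernel $K(\eta)=\{v\in W\mid v\wedge\eta=0\}$ and prove $\dim K(\eta)=1$ by putting $\eta$ into the normal form $e_1\wedge(e_2\wedge e_3+e_4\wedge e_5)$ via the rank classification of $\omega_0\in\bigwedge^2 U$; the paper instead argues directly that $\alpha_1\wedge\alpha_2\wedge\omega_2=0$ together with the non-simplicity of $\alpha_2\wedge\omega_2$ forces $\alpha_1\wedge\alpha_2=0$ (which implicitly rests on the same rank dichotomy you make explicit). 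For $\pi_2$ you pin down the exact ambiguity $\omega\mapsto c^{-1}\omega+\alpha\wedge\gamma$ via Koszul exactness and check invariance by substitution, whereas the paper sidesteps identifying the ambiguity altogether with the chain $[\alpha_2\wedge\omega_2\wedge\omega_2]=[\alpha_1\wedge\omega_1\wedge\omega_2]=\cdots=[\alpha_1\wedge\omega_1\wedge\omega_1]$. Your version is longer but yields the useful byproducts that $K(\eta)$ gives a coordinate-free description of $\pi_1$ and that the fibre of the decomposition map is exactly an affine space $\alpha\wedge W$; the paper's version is shorter and avoids any normal-form computation. Both arguments are complete.
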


\begin{proof}
Assume that $[\alpha_1\wedge \omega_1]=[\alpha_2\wedge \omega_2]\in O_2\setminus
O_3$
for some  $\alpha_1, \alpha_2 \in V$ and $\omega_1, \omega_2 \in \bigwedge^2 W$.
We need to show that $[\alpha_1]=[\alpha_2]$ and 
$$
[\alpha_1\wedge \omega_1\wedge
\omega_1]=[\alpha_2\wedge \omega_2\wedge \omega_2].
$$
Observe that under our assumption we have $\alpha_1\wedge \alpha_2\wedge
\omega_2=0$, but $\alpha_2 \wedge \omega_2$ is not a simple form,
hence $\alpha_1 \wedge \alpha_2=0$ and the first part of the assertion follows.
We infer the second part since
\[
[\alpha_2 \wedge \omega_2 \wedge \omega_2]=[\alpha_1\wedge \omega_1\wedge
\omega_2]=[\alpha_1\wedge\omega_2\wedge\omega_1]=
[\alpha_2 \wedge \omega_2 \wedge \omega_1]=[\alpha_1 \wedge \omega_1 \wedge
\omega_1].\qedhere
\]
\end{proof}

\begin{prop}\label{Pic(W_2)}
The divisor class group of $O_2$ has rank $2$ and is
generated by the closures of the pull-backs of the hyperplane sections by
$\pi_1$ and $\pi_2$;
denote them by $H$ and $H_2$.
\end{prop}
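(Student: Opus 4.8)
The plan is to transport the computation to the smooth model $M:=\mathbb{P}(\Omega^3_{\mathbb{P}^5}(3))$ used in the proof of Proposition \ref{smoothV}, where $\alpha\colon M\to O_2\subset\mathbb{P}(\bigwedge^3 W)$ is the morphism given by $|\mathcal{T}|$ with $\mathcal{T}=\oo_{\mathbb{P}(\Omega^3_{\mathbb{P}^5}(3))}(-1)$, and $\pi=\pi_1\circ\alpha\colon M\to\mathbb{P}(W)$ is the bundle projection. First I would record that $\alpha$ is birational: over a point of $O_2\setminus O_3=O_2\setminus G(3,W)$ the maps $\pi_1,\pi_2$ are well defined by Lemma \ref{well-defined} and recover the point, so $\alpha$ restricts to an isomorphism $M\setminus E\xrightarrow{\sim}O_2\setminus G(3,W)$, where $E=\alpha^{-1}(G(3,W))$ is the exceptional locus.

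The decisive point is the size of $E$. By Remark \ref{alpha}, $E$ is isomorphic to the projectivization of the rank-$3$ tautological bundle on $G(3,W)$, with $\mathbb{P}^2$-fibers over $G(3,W)$; hence $\dim E=9+2=11$, so $E$ has codimension $3$ in the $14$-dimensional $M$. In particular $\alpha$ is an isomorphism in codimension one, and $O_2$ is smooth, hence regular in codimension one, away from $G(3,W)$, a locus of codimension $5$. Since deleting a closed subset of codimension $\geq 2$ leaves the Weil divisor class group of a variety regular in codimension one unchanged (Weil divisors and principal divisors are determined in codimension one), and since $M$ is smooth so that $\operatorname{Cl}(M)=\Pic(M)$ is unaffected by removing the codimension-$3$ locus $E$, I would conclude
\[
\operatorname{Cl}(O_2)=\operatorname{Cl}\bigl(O_2\setminus G(3,W)\bigr)=\operatorname{Cl}(M\setminus E)=\Pic(M).
\]
This reduces the whole statement to a computation of $\Pic(M)$.

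Now $M\to\mathbb{P}(W)=\mathbb{P}^5$ is a projective bundle and $\Pic(\mathbb{P}^5)=\mathbb{Z}$, so the projective bundle formula gives a free group of rank two, $\Pic(M)=\mathbb{Z}H\oplus\mathbb{Z}T$, with $H=\pi^{\ast}\oo_{\mathbb{P}(W)}(1)$ and $T$ the relative tautological class. This already yields the rank assertion. To identify the geometric generators, $H$ is by construction the pull-back of the hyperplane by $\pi_1$, while by Remark \ref{alpha} the pull-back $H_2$ of the hyperplane by $\pi_2$ lies in $|2T-H|$, that is $H_2=2T-H$, equivalently $H+H_2=2T$. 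Thus $H$ and $H_2$ are the two non-proportional pull-back classes spanning the rank-two group $\operatorname{Cl}(O_2)=\Pic(M)$ (with $\{H,T\}$ an integral basis), which is the stated generation.

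The main obstacle — and the step on which the rank is genuinely decided — is the codimension count for $E$: it is precisely because $\alpha$ contracts a codimension-three (rather than a codimension-one) locus that $\operatorname{Cl}(O_2)$ equals the \emph{full} $\Pic(M)$ of rank two, instead of its quotient by an exceptional divisor class. I would therefore take care to justify via Remark \ref{alpha} that the fibers of $\alpha$ over $G(3,W)$ are planes and that $G(3,W)$ has codimension $5$ in $O_2$, so that the class-group comparison is legitimate and no exceptional divisor needs to be quotiented out; everything downstream is then the routine projective bundle formula together with the relation $H+H_2=2T$ recorded earlier.
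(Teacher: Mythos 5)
Your argument is correct, and it takes a genuinely different route from the paper's. The paper proves this by applying the Grothendieck--Lefschetz theorems of Ravindra--Srinivas twice: once to identify $\operatorname{Cl}(O_2)$ with the Picard group of a generic codimension-$10$ linear section $O_2'$ (which is smooth and misses $G(3,W)$), and once to identify the Picard group of $O_2''=\alpha^{-1}(O_2')$ --- an intersection of ten generic members of $|T|$ --- with $\Pic(\mathbb{P}(\Omega^3_{\mathbb{P}^5}(3)))$. You instead use only the elementary excision isomorphism for divisor class groups: since $\operatorname{Sing}(O_2)=G(3,W)$ has codimension $5$ and the exceptional locus $E=\alpha^{-1}(G(3,W))$, a $\mathbb{P}^2$-bundle over the $9$-dimensional $G(3,W)$, has codimension $3$ in the $14$-dimensional smooth model $M$, removing these loci changes neither class group, and $\alpha$ identifies the two complements (the inverse being $p\mapsto(\pi_1(p),p)$, a morphism by Lemma \ref{well-defined}). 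This is cleaner and avoids any normality hypothesis on $O_2$ beyond regularity in codimension one; the codimension count on $E$ is indeed the decisive point, exactly as you emphasize. One caveat, which your parenthetical already flags and which is equally present in the paper's own proof: since $H+H_2=2T$, the classes $H$ and $H_2$ generate only an index-two sublattice of $\operatorname{Cl}(O_2)=\mathbb{Z}H\oplus\mathbb{Z}T$, so the ``generation'' in the statement can only be meant rationally (or up to finite index); what both proofs actually establish is $\operatorname{Cl}(O_2)\cong\mathbb{Z}H\oplus\mathbb{Z}T$ of rank $2$, which is all that is used downstream (Lemma \ref{W_2} and the effectivity arguments in Section \ref{proof}).
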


\begin{proof}
 First, the Picard
group of the projectivized vector bundle
$$
\mathbb{P}(\Omega^3_{\mathbb{P}^5}(3))\subset
\mathbb{P}(\textstyle\bigwedge^3W)\times \mathbb{P}^5
$$ 
has rank $2$ and is
generated by $H$ and $T$, the pull-backs of hyperplanes from
$\mathbb{P}(W)$ and $\mathbb{P}(\bigwedge^3W)$ respectively.
So it is enough to consider the map 
$$
\alpha \colon
\mathbb{P}(\Omega^3_{\mathbb{P}^5}(3))\to O_2\subset \PP(\textstyle\bigwedge^3 W)
$$ 
given by the linear system of the big divisor
$T$.
By \cite[Thm.~1]{RS}, the divisor class group of $O_2\subset \PP^{19}$ is
isomorphic to the divisor class group of its generic codimension $10$ linear
section $O_2'$.
Since $O_2'$ is smooth, the latter is equal to the Picard group of $O_2'$.
On the other hand,
$\alpha$ restricted to the pre-image $O_2''$ of $O_2'$ is an isomorphism. Since
$O_2''$ is the intersection of ten generic big divisors from the system $|H|$,
we deduce from the generalized Lefschetz
 theorem \cite[Thm.~6]{RS} that the Picard group of $O_2''$ is isomorphic to the
Picard group of $\PP(\Omega^3_{\mathbb{P}^5}(3))$.
\end{proof}

Let us describe the projective tangent space to $O_2$ at a point $p\in O_2\setminus O_3$.
Denote first by $F_p=\overline{\pi^{-1}_1(\pi_1(p))}$ and $F'_p=\overline{\pi^{-1}_2(\pi_2(p))}$ the fibers of $\pi_i$ for $i=1,2$.

\begin{lemm}\label{styczna do W2}
Let $p=[\alpha \wedge \omega] \in O_2\setminus O_3$, where $\alpha \in W$ and
$\omega\in \bigwedge^2 W$. Then the projective tangent space $T_p O_2$ is
the linear space spanned by the two fibers $F_p$
and $F_p'$, passing through
$p$, and by the linear space 
$$
\Pi=\{[\gamma \wedge \omega] \in \mathbb{P}(\textstyle\bigwedge^3
W) \mid \gamma \in W \}.
$$
\end{lemm}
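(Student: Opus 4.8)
The plan is to present the affine cone $\widehat{O_2}\subset\bigwedge^3 W$ over $O_2$ as the image of the multiplication map
$$
\mu\colon W\times\textstyle\bigwedge^2 W\to\textstyle\bigwedge^3 W,\qquad (\alpha,\omega)\mapsto\alpha\wedge\omega,
$$
and to read the projective tangent space $T_pO_2=\PP(T_v\widehat{O_2})$, with $v=\alpha\wedge\omega$, off its differential. Differentiating at $(\alpha,\omega)$ gives $d\mu_{(\alpha,\omega)}(\dot\alpha,\dot\omega)=\dot\alpha\wedge\omega+\alpha\wedge\dot\omega$, hence
$$
\operatorname{im} d\mu_{(\alpha,\omega)}=W\wedge\omega+\alpha\wedge\textstyle\bigwedge^2 W\ \subseteq\ T_v\widehat{O_2}.
$$
Here $\alpha\wedge\bigwedge^2 W$ is precisely the affine cone over the fibre $F_p=\{[\alpha\wedge\gamma]\mid\gamma\in\bigwedge^2 W\}$, while $W\wedge\omega$ is the affine cone over $\Pi$. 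So the inclusion already yields $\langle F_p,\Pi\rangle\subseteq T_pO_2$ directly from the parametrisation.

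To upgrade this to an equality I would first fix a normal form. Since $p\in O_2\setminus O_3$ the class $\alpha\wedge\omega$ is not decomposable, which means the form induced by $\omega$ on $W/\langle\alpha\rangle$ has rank $4$; after a change of basis I may take $\alpha=e_1$ and $\omega=e_2\wedge e_3+e_4\wedge e_5$. A short count then gives $\dim(W\wedge\omega+\alpha\wedge\bigwedge^2 W)=15$: the summand $\alpha\wedge\bigwedge^2 W$ contributes the ten forms $e_1\wedge e_i\wedge e_j$, and $W\wedge\omega$ contributes the five further independent classes $e_2\wedge e_4\wedge e_5$, $e_3\wedge e_4\wedge e_5$, $e_2\wedge e_3\wedge e_4$, $e_2\wedge e_3\wedge e_5$ and $e_2\wedge e_3\wedge e_6+e_4\wedge e_5\wedge e_6$. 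Because $O_2\setminus O_3$ is a single $PGL(W)$-orbit it is smooth of dimension $14$, so $\dim T_v\widehat{O_2}=15$; combined with the inclusion above this forces $T_pO_2=\PP(W\wedge\omega+\alpha\wedge\bigwedge^2 W)=\langle F_p,\Pi\rangle$.

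The hard part will be to see that $F_p'$ is already contained in this span, since $F_p'$ is controlled by the second fibration $\pi_2$ rather than by $\mu$. Here I would compute $\pi_2(p)=[\alpha\wedge\omega\wedge\omega]=[2\,e_1\wedge e_2\wedge e_3\wedge e_4\wedge e_5]$, whose associated hyperplane is $N=\{x\in W\mid x\wedge\alpha\wedge\omega\wedge\omega=0\}=\langle e_1,\dots,e_5\rangle$, so that $F_p'=\PP(\bigwedge^3 N)$ by the description of the fibres of $\pi_2$ in the Appendix. The key observation is that in this normal form $\alpha\in N$ and $\omega\in\bigwedge^2 N$. Writing $N=\langle\alpha\rangle\oplus N'$ with $N'=\langle e_2,e_3,e_4,e_5\rangle$, I split
$$
\textstyle\bigwedge^3 N=\bigl(\alpha\wedge\bigwedge^2 N\bigr)\ \oplus\ \bigwedge^3 N'.
$$
The first summand sits inside $\alpha\wedge\bigwedge^2 W$; for the second, the restriction $\omega|_{N'}$ is a non-degenerate $2$-form — this is exactly the rank-$4$ condition separating $O_2\setminus O_3$ from $O_3$ — so $x\mapsto x\wedge\omega$ is an isomorphism $N'\xrightarrow{\sim}\bigwedge^3 N'$, whence $\bigwedge^3 N'=N'\wedge\omega\subseteq W\wedge\omega$. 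Therefore $\bigwedge^3 N\subseteq W\wedge\omega+\alpha\wedge\bigwedge^2 W$, i.e.\ $F_p'\subseteq T_pO_2$.

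Assembling the inclusions gives $\langle F_p,F_p',\Pi\rangle\subseteq T_pO_2=\langle F_p,\Pi\rangle\subseteq\langle F_p,F_p',\Pi\rangle$, so the three spaces together span the projective tangent space, as claimed. The only genuinely delicate points are the identification $F_p'=\PP(\bigwedge^3 N)$ and the containment of $\bigwedge^3 N$ in the tangent space; both become transparent once the normal form is in place, and it is precisely the non-degeneracy of $\omega$ on $N'$ that makes the final inclusion work.
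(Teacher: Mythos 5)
Your proof is correct, but it takes a genuinely different route from the paper's. The paper argues synthetically: it observes that $F_p$, $F_p'$ and $\Pi$ are linear subspaces of $O_2$ passing through $p$, hence lie in $T_pO_2$; that $F_p\cap F_p'\cong\PP^5$ forces $\langle F_p,F_p'\rangle$ to be a $\PP^{13}$, i.e.\ a hyperplane of the $\PP^{14}=T_pO_2$; and that this span lies in the hyperplane $\Sigma_p=\PP(\{\beta \mid \beta\wedge\alpha\wedge\omega=0\})$, which does not contain $\Pi$ -- so adjoining $\Pi$ fills out the tangent space. You instead read the tangent space off the differential of the parametrization $\mu(\alpha,\omega)=\alpha\wedge\omega$, identify $\operatorname{im}d\mu$ with the affine cone over $\langle F_p,\Pi\rangle$, check by a normal-form count that it has the correct dimension $15$ (using smoothness along the orbit $O_2\setminus O_3$), and then absorb $F_p'$ into $\langle F_p,\Pi\rangle$ via the symplectic isomorphism $N'\xrightarrow{\ \sim\ }\bigwedge^3 N'$. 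Your argument proves slightly more -- $F_p$ and $\Pi$ alone already span $T_pO_2$, so $F_p'$ is redundant in the statement -- whereas the paper's argument yields as a byproduct the identification $\Sigma_p\cap T_pO_2=\langle F_p,F_p'\rangle$ recorded in Remark \ref{remark on T p cap Sigma p} and exploited in the proof of Proposition \ref{tgW_2}, which your computation does not hand you directly. One small point of hygiene: bringing $\omega$ to the form $e_2\wedge e_3+e_4\wedge e_5$ may require replacing $\omega$ by $\omega-\alpha\wedge u$ for some $u\in W$, which is not a change of basis and does change $\Pi$ itself; this is harmless because it changes $\Pi$ only modulo the cone over $F_p$, so $\langle F_p,\Pi\rangle$ and the assertion of the lemma are unaffected, but it deserves a word.
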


\begin{proof} It is clear that all three linear spaces are contained in $O_2$
and pass
through $p$. It follows that they span a subspace of the tangent space $T_p O_2$.
Recall that $O_2$
is of dimension $14$, and the intersection  $F_p\cap
F_p'$ is a $\mathbb{P}^5$. It follows
that the two fibers span a hyperplane in $T_p O_2$. It is hence enough to prove
that  $\Pi$ is not contained in the span of
the two fibers. To do so,  denote by $\Sigma_p$ the hyperplane 
$$
\PP (\{\beta
\in \textstyle\bigwedge^3 W \mid \beta \wedge \alpha \wedge \omega=0 \}).
$$
Clearly,  $F_p\cap F'_p \subset \Sigma_p$, whereas $\Pi\nsubseteq \Sigma$
as there exists $ \gamma \in W$ such that
$\gamma\wedge \alpha \wedge \omega \wedge \omega \neq 0$.
\end{proof}

\begin{rem}\label{remark on T p cap Sigma p}
Observe that $\Sigma_p\cap T_p O_2$ is the $\mathbb{P}^{13}$ spanned by the two
fibers.
\end{rem}

\begin{prop}\label{tgW_2}
 Let $T_p$ be the projective tangent space to $O_2$ at a smooth point $p\in O_2$. Then there
are no $5$-dimensional isotropic subspaces $K\subset T_p O_2$ such that $p\in K$
and 
$$
K\cap
F_p\cap F_p'\cap O_3=\emptyset.
$$
\end{prop}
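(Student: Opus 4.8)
The plan is to study the restriction of the symplectic form of $\bigwedge^3 W$ to the $15$-dimensional cone over $T_p O_2$ (recall $\dim O_2 = 14$, so $T_p O_2$ is a $\mathbb{P}^{14}$), to locate its radical, and then to force any isotropic $\mathbb{P}^5$ through $p$ to meet the quadric $F_p\cap F_p'\cap O_3$ by an elementary ``a line meets a quadric'' argument. Since $PGL(W)$ acts transitively on $O_2\setminus O_3$ and only rescales the wedge pairing, isotropy is preserved and I may normalize $p=[\alpha\wedge\omega]$ with $\alpha=e_1$ and $\omega=e_2\wedge e_3+e_4\wedge e_5$ in a fixed basis $e_1,\dots,e_6$ of $W$. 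Passing to affine cones in $\bigwedge^3 W$, I write $\widehat T$ for the $15$-dimensional cone over $T_pO_2$, and $\widehat F_p=\alpha\wedge\bigwedge^2 W$, $\widehat F_p'=\bigwedge^3 N$ (where $N\subset W$ is the hyperplane with $\pi_2(p)=[N]$ and $\alpha\in N$), $\widehat\Pi=W\wedge\omega$, and $\widehat K$ for a given $K$.

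First I record the symplectic geometry of $\widehat T$. The one-line computations $(\alpha\wedge\gamma_1)\wedge(\alpha\wedge\gamma_2)=0$ and $\bigwedge^6 N=0$ show that $\widehat F_p$ and $\widehat F_p'$ are both isotropic of dimension $10$, hence Lagrangian. By Lemma \ref{styczna do W2} we have $\widehat T=\widehat F_p+\widehat F_p'+\widehat\Pi$, so the symplectic perp satisfies $\widehat T^{\perp}\subseteq \widehat F_p^{\perp}\cap(\widehat F_p')^{\perp}=\widehat F_p\cap\widehat F_p'$, a $6$-dimensional space already contained in $\widehat T$. Therefore the radical of the restricted form is $R:=\widehat T\cap \widehat T^{\perp}=\widehat T^{\perp}$, of dimension $20-\dim\widehat T=5$, and it sits inside $\widehat F_p\cap\widehat F_p'$. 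Consequently the quotient $\widehat T/R$ is a nondegenerate symplectic space of dimension $10$, whose isotropic subspaces have dimension at most $5$.

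The two decisive facts are: (i) $p\notin R$, because $\langle\alpha\wedge\omega,\ \gamma\wedge\omega\rangle=\alpha\wedge\gamma\wedge\omega^2$, which is nonzero for any $\gamma\notin N$ (here $\omega^2\neq 0$ precisely because $p\notin O_3$), while $\gamma\wedge\omega$ lies in $\widehat\Pi\subseteq\widehat T$; and (ii) by Remark \ref{F_v} the $\mathbb{P}^5=F_p\cap F_p'$ is spanned by a $G(2,4)$ and meets $O_3=G(3,W)$ exactly in this $G(2,4)$, which is a smooth quadric \emph{hypersurface} $Q\subset\mathbb{P}^5$. Now let $\widehat K\subset\widehat T$ be a $6$-dimensional isotropic subspace with $p\in\widehat K$ (this is the cone over the $5$-dimensional isotropic $K$ of the statement). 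Its image in $\widehat T/R$ is isotropic, hence of dimension $\le 5$, so $\dim(\widehat K\cap R)\ge 6-5=1$. Choose $0\neq r\in \widehat K\cap R$; since $p\notin R$ the classes $[p]$ and $[r]$ are distinct and span a line $\ell\subset\mathbb{P}(\widehat K)=K$. As $p,r\in\widehat F_p\cap\widehat F_p'$, this line lies in $F_p\cap F_p'=\mathbb{P}^5$, and over $\mathbb{C}$ it must meet the quadric hypersurface $Q$. Thus $K\cap F_p\cap F_p'\cap O_3\supseteq\ell\cap Q\neq\emptyset$, which is the assertion.

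I expect the main obstacle to be the second paragraph: identifying that the restricted form on $T_pO_2$ is degenerate with its $5$-dimensional radical $R$ \emph{forced inside} $F_p\cap F_p'$ yet \emph{missing} $p$. Once this structural fact is in place, the dimension count and the ``line meets a quadric'' step are routine. A secondary point needing care is fact (ii), namely the identification (via Remark \ref{F_v}) of $F_p\cap F_p'\cap O_3$ with a quadric hypersurface in the $\mathbb{P}^5$ rather than a higher-codimension locus, since it is exactly the hypersurface property that makes the final line argument close.
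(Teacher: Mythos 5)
Your proof is correct and follows essentially the same route as the paper's: both arguments reduce to symplectic linear algebra on $T_pO_2$ (using that $F_p$ and $F_p'$ are Lagrangian and that the degenerate directions are confined to $F_p\cap F_p'$) to force a line of $K$ into $F_p\cap F_p'$, which must then meet the quadric hypersurface $F_p\cap F_p'\cap O_3$. The only cosmetic difference is that the paper extends $K$ to a Lagrangian $L\subset \Sigma_p\cap T_pO_2$ containing $F_p\cap F_p'$ and intersects $K$ with $F_p\cap F_p'$ inside $L$, whereas you locate the $5$-dimensional radical of the restricted form on the full tangent cone and take the line through $p$ and a radical vector of $\widehat{K}$.
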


\begin{proof} Let $K$ be an isotropic subspace of $T_p O_2$ and let $L$ be a
Lagrangian (maximal isotropic) subspace of $T_p O_2$ containing $K$.
Then, since $p\in K\subset L$, we have $L\subset \Sigma_p$, where $\Sigma_p$ is
as in  the proof of Lemma \ref{styczna do W2}. By Remark \ref{remark on
T p cap Sigma p}, we get
$K\subset L\subset  \PP(U_1) +
\PP(U_2)$. We observe that the projectivized support $S$
of the intersection form on the latter $\mathbb{P}^{13}$ has dimension $7$ and
is disjoint from $F_p\cap F'_p $.
It follows that $\dim(L\cap S)=3$, $\dim(L)= 9$ and
$F_p\cap F'_p \subset L$.
It is easy to see that
$F_p\cap F'_p \cap O_3$
is a quadric hypersurface in $F_p\cap F'_p $.
It follows that any $5$-dimensional subspace of $L$ meets
$F_p\cap F'_p \cap O_3$,
as it meets
$F_p\cap F'_p $ in a
line.
\end{proof}


\begin{lemm}\label{H_1+H_2} Let us keep the notation above. Then the linear system
$|H+H_2|$ is given by the
restrictions of quadrics to
$O_2\subset \PP(\bigwedge^3 W)$.
\end{lemm}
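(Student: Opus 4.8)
The plan is to make the two fibrations explicit at the level of linear forms and then to recognise the product of a $\pi_1$-linear form with a $\pi_2$-linear form as the restriction of a single quadric on $\PP(\bigwedge^3 W)$. A point of $O_2\setminus O_3$ is $[\alpha\wedge\omega]$ with $\alpha\in W$, $\omega\in\bigwedge^2 W$, and by Lemma \ref{well-defined} the rational functions $l\mapsto l(\alpha)$ for $l\in W^\vee$ and $v\mapsto v\wedge\alpha\wedge\omega\wedge\omega$ for $v\in W$ (the latter valued in $\bigwedge^6 W\cong\mathbb C$) are exactly the pull-backs under $\pi_1$ and $\pi_2$ of the hyperplane forms on $\PP(W)$ and $\PP(W^\vee)$; so they furnish $H^0(H)=W^\vee$ and $H^0(H_2)=W$. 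Each expression alone depends on the gauge $(\alpha,\omega)\mapsto(\lambda\alpha,\lambda^{-1}\omega)$ fixing $\beta:=\alpha\wedge\omega$ (scaling by $\lambda$ and $\lambda^{-1}$ respectively), but I would observe first that their \emph{product} $l(\alpha)\cdot(v\wedge\alpha\wedge\omega\wedge\omega)$ is gauge-invariant and homogeneous of degree $2$ in $\beta$, hence is a well-defined section of $\oo_{\PP(\bigwedge^3 W)}(2)|_{O_2}$ realising the corresponding member of $|H+H_2|$.

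The heart of the argument is the identity, valid for $\beta=\alpha\wedge\omega$,
\[
(\iota_l\beta)\wedge(v\wedge\beta)\;=\;l(\alpha)\,\bigl(v\wedge\alpha\wedge\omega\wedge\omega\bigr)\in\textstyle\bigwedge^6 W\cong\mathbb C,
\]
whose left-hand side is \emph{visibly} a quadratic form in $\beta$ on all of $\bigwedge^3 W$, since $\iota_l\beta$ and $v\wedge\beta$ are each linear in $\beta$. To prove it I would expand $\iota_l(\alpha\wedge\omega)=l(\alpha)\,\omega-\alpha\wedge\iota_l\omega$ via the antiderivation property of contraction and wedge with $v\wedge\alpha\wedge\omega$: the second summand carries a repeated factor $\alpha\wedge\dots\wedge\alpha=0$ and dies, while the first reorders to $l(\alpha)\,v\wedge\alpha\wedge\omega\wedge\omega$ with trivial sign (the degree-two factor $\omega$ commutes past $v$ and $\alpha$). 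Thus the quadric $Q_{l,v}(\beta):=(\iota_l\beta)\wedge(v\wedge\beta)$ restricts on $O_2$ precisely to the product section above.

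From this the lemma follows: every product of a section of $H$ with one of $H_2$ is the restriction of a quadric, and on the smooth locus $O_2\setminus O_3$ the zero divisor of $Q_{l,v}|_{O_2}$ splits as $\pi_1^{-1}(\{l=0\})+\pi_2^{-1}(\{v=0\})$, a divisor in the class $H+H_2$; being also the restriction of a quadric it lies in $2T$, whence $H+H_2=2T$ in $\operatorname{Cl}(O_2)$ (using that $O_3=G(3,W)$ has codimension $5$ in $O_2$, so $\operatorname{Cl}(O_2)=\operatorname{Cl}(O_2\setminus O_3)$). Since these products span the image of the multiplication map $H^0(H)\otimes H^0(H_2)\to H^0(H+H_2)$, the subsystem they generate is cut out by quadrics. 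The one genuinely delicate point — the \emph{completeness}, i.e.\ that the restriction $H^0(\PP(\bigwedge^3 W),\oo(2))\to H^0(O_2,2T)$ is surjective onto the full $|H+H_2|$ (quadratic normality of $O_2$) — is the main obstacle; I would settle it by transporting the computation to the desingularization $\PP(\Omega^3_{\PP^5}(3))$, where $\pi_1,\pi_2$ become morphisms, $2T$ pushes down to $\Sym^2$ of the relevant bundle, and the space of global sections can be computed directly.
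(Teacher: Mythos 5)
Your proof is correct and is essentially the paper's own argument: the paper exhibits exactly the same quadric, $Q(\omega)=\omega(v)\wedge\omega\wedge\gamma$ for $v\in W^{\vee}$, $\gamma\in W$ (your $Q_{l,v}(\beta)=(\iota_l\beta)\wedge(v\wedge\beta)$), and identifies its zero locus on $O_2$ with $\pi_1^{-1}(L_1)\cup\pi_2^{-1}(L_2)$. Your version of the verification, via the identity $(\iota_l\beta)\wedge(v\wedge\beta)=l(\alpha)\,(v\wedge\alpha\wedge\omega\wedge\omega)$ for $\beta=\alpha\wedge\omega$, is a clean divisor-level form of the paper's set-theoretic check of the two inclusions, and it already yields the class equality $H+H_2=2T$, which is all the paper uses; the completeness question you flag at the end is not needed.
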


\begin{proof}
Let $v\in W^{\vee}$ and $\gamma\in W=(W^{\vee})^{\vee}$ correspond to the hyperplanes
$L_1\subset \PP(W)$ and $L_2\subset \PP(W^{\vee})$
respectively. Consider the quadric form
$$
Q \colon \textstyle\bigwedge^3 W \colon \omega \mapsto \omega(v)\wedge \omega \wedge \gamma
\in \bigwedge^{6}W=\mathbb{C}.
$$ 
It is enough to prove that
$Q^{-1}(0)\cap O_2 =\pi_1^{-1}(L_1)\cup \pi_2^{-1}(L_2)$, and this has
to be checked only outside $G(3,W)\subset O_2$.
\begin{itemize}
 \item We first prove the inclusion $\supseteq$. Take $\omega \in
\pi_1^{-1}(L_1)$. Then there exists
$\alpha \in  H$ such that $\alpha\wedge \omega =0$. We then observe that since
$\alpha \in H$, it follows that $\alpha\wedge \omega(v) =0$.
The inclusion of the second component follows by duality.
 \item Let us pass to the inclusion $\subseteq$. Take 
$$
\omega \in
O_2\setminus(\pi_1^{-1}(L_1)\cup \pi_2^{-1}(L_2) \cup G(3,W)).
$$ 
Then $\omega$
may be written in the form $\alpha \wedge \beta$ with $\beta\in \bigwedge^2W$
such that $\alpha\wedge \beta^2 \wedge w$ is non-zero and
$v(\alpha)$ is non-zero. The value of the quadric on $\omega$ is then the product
of these non-zero values.
\end{itemize}
\end{proof}

Denote by $G$ (resp.~$G'$) the singular locus of the EPW sextic $S_A\subset \PP(W)$ (resp. $S_A'\subset \PP(W^{\vee})$). It is known (see
\cite{EPW}) that $S_A$ has $A_1$ singularities along $G$ and that
$G\subset\mathbb{P}^5$ is a smooth surface of degree $40$.
It follows that the $G$ is scheme-theoretically defined by the six quintics
which are the partial derivatives of the sextic $S_A$.
Denote by $E,E_2\subset V':=O_2\cap \PP(A)$  (where $A\subset \bigwedge^3 W$ is a 10-dimensional Lagrangian subspace) the
exceptional locus of $\pi_i$ for
$i=1,2$ and by abusing notation $H$ the restrictions of $H$ to $V'\subset O_2$.

\begin{cor}\label{dol}
The morphism $\pi_1 \colon V'\rightarrow S_A$ is the blow-up of
$G\subset S_A$. Moreover, the birational map $
  \pi_2\colon V'\rightarrow S_A'$ is given by the linear system
$|5H-E|$.
\end{cor}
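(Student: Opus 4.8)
The plan is to prove the two assertions separately, using throughout that (by Proposition \ref{smoothV}, in the case $\Theta_A=\emptyset$ to which this corollary applies) $V'=O_2\cap\PP(A)$ is a smooth fourfold, that $\pi_1\colon V'\to S_A$ is birational and restricts to an isomorphism over the smooth locus $S_A\setminus G$, and that $\Pic(V')$ has rank $2$, generated by $H$ and $H_2$.

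\textbf{Part 1 (the blow-up).} First I would record the local analytic structure of $S_A$ along $G$: since $S_A$ has $A_1$ singularities along the smooth surface $G$, near a general point of $G$ we have $S_A\cong\{u^2+v^2+w^2=0\}\times\mathbb{A}^2$ with $G$ the reduced singular locus $\{u=v=w=0\}$. Blowing up $\mathcal{I}_G$ therefore resolves these transverse nodes, so $\mathrm{Bl}_G S_A$ is smooth with exceptional divisor a $\PP^1$-bundle over $G$. The key step is to identify $V'$ with $\mathrm{Bl}_G S_A$ over $S_A$. Because $\pi_1$ is an isomorphism away from $G$, the sheaf $\mathcal{I}_G\cdot\mathcal{O}_{V'}$ is invertible off the exceptional divisor $E$, and the local $A_1$ computation shows it is invertible along $E$ as well; the universal property of the blow-up then yields a morphism $g\colon V'\to\mathrm{Bl}_G S_A$ over $S_A$. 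Now $g$ is a projective birational morphism of smooth projective fourfolds, an isomorphism over $S_A\setminus G$. Since over a general point of $G$ both $V'$ and $\mathrm{Bl}_G S_A$ carry a $\PP^1$ mapping birationally onto the corresponding fibre, $g$ contracts no divisor, i.e.\ $g$ is small; but a small birational projective morphism onto a smooth (hence $\mathbb{Q}$-factorial) target is an isomorphism. Hence $V'\cong\mathrm{Bl}_G S_A$ and $\pi_1$ is the blow-up of $G$. Equivalently, one may argue that the rank-$2$ group $\Pic(V')$ forces the relative Picard number of $\pi_1$ to be $1$, so $\pi_1$ is the extremal divisorial contraction of the ray of the fibres of $E\to G$, which for a transverse $A_1$ locus is exactly the blow-up.

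\textbf{Part 2 (the system $|5H-E|$).} Here I would identify $\pi_2$ with the polar (Gauss) map of the sextic. The gradient map $\operatorname{grad}_{S_A}\colon S_A\dashrightarrow\PP(W^{\vee})$, sending a smooth point $x$ to $[\partial_0 s_A(x):\dots:\partial_5 s_A(x)]$, assigns to $x$ its tangent hyperplane, i.e.\ the dual point on $S_A'$; its image is $S_A'$ and its scheme-theoretic base locus is precisely $\operatorname{Sing}(S_A)=G$, since $G$ is cut out by the six quintic partial derivatives of $s_A$ (as recorded just before the statement). By the description of $\pi_2$ on the fibres $F_v$ together with the duality lemma (which via \cite[Cor.~1.5]{O2} shows $\pi_2(F_v)$ is the hyperplane dual to the tangent space to $S_A$), the assignment $v\mapsto\pi_2(F_v)$ is exactly this tangent-hyperplane map, so $\pi_2=\operatorname{grad}_{S_A}\circ\pi_1$. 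Pulling the quintic partials back to $V'=\mathrm{Bl}_G S_A$, they become sections of $\mathcal{O}_{V'}(5H)$ vanishing along $E$; because the nodal ($A_1$, not higher $A_k$) nature of the singularities makes the partials vanish to order exactly $1$ along $G$, the base divisor removed is $E$ with multiplicity one. Thus $\pi_2$ is given by $|5H-E|$, which also recovers the equality $H_2=5H-E$ of Lemma \ref{W_2}.

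\textbf{Main obstacle.} The delicate point is Part 1: showing that $V'$ is the blow-up and not merely some resolution. This rests on the transverse $A_1$ normal form along $G$ and on the rank-$2$ computation of $\Pic(V')$ (to pin down relative Picard number $1$); once these are in place, the smallness argument closes the identification cleanly. For Part 2 the only step needing care is the multiplicity-one vanishing of the quintic partials along $G$, which again follows from the nodal nature of the singularities.
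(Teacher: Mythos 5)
Your proof is correct and follows exactly the route the paper intends: Corollary \ref{dol} is stated with no written proof, as an immediate consequence of the facts recorded just before it ($S_A$ has transverse $A_1$ singularities along the smooth surface $G$, which is cut out scheme-theoretically by the six quintic partial derivatives of $s_A$) together with the duality lemma of Section 4.2 identifying $\pi_2$ with the Gauss map of $S_A$. Your write-up merely supplies the details the paper leaves implicit --- the universal-property/smallness (equivalently, relative Picard number one) argument identifying $V'$ with $\mathrm{Bl}_G S_A$, and the multiplicity-one removal of $E$ from the pulled-back system of quintic partials.
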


We also  obtain the following corollary (note that it can also be proved using the methods from \cite{W}):

\begin{cor}
 The degree of $O_2\subset \mathbb{P}^{19}$ is $42$.
\end{cor}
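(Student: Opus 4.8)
The plan is to read off the degree directly from the desingularization $\alpha\colon \PP(\Omega^3_{\PP^5}(3))\to O_2$ produced in the proof of Proposition \ref{smoothV}. By Proposition \ref{Pic(W_2)} the map $\alpha$ is a birational morphism onto $O_2\subset\PP(\bigwedge^3W)=\PP^{19}$ given by the complete linear system of $T$, so $\alpha^{\ast}\oo_{\PP^{19}}(1)=\oo(T)$. Since $\alpha$ has degree $1$ and $\dim O_2=14$, the degree of $O_2$ equals the top self-intersection $T^{14}$ evaluated on the $14$-dimensional variety $\PP(\Omega^3_{\PP^5}(3))$. Thus the computation reduces to a Chern-class calculation on a projective bundle.

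Concretely, $p\colon \PP(\Omega^3_{\PP^5}(3))\to\PP^5$ has fibres $\PP^9$, as $E:=\Omega^3_{\PP^5}(3)$ has rank $10$. Applying the standard Segre-class pushforward formula $p_{\ast}(T^{9+i})=s_i(E)$ (with $s(E)=c(E)^{-1}$), I would write
\[
\deg O_2 = \int_{\PP^5} p_{\ast}(T^{14}) = \int_{\PP^5} s_5\big(\Omega^3_{\PP^5}(3)\big),
\]
so that everything comes down to extracting the degree-$5$ part of the total Segre class of $E$.

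To make the Chern roots explicit I would use the isomorphism $\Omega^3_{\PP^5}(3)\cong \bigwedge^2\mathcal{T}_{\PP^5}\otimes\oo_{\PP^5}(-3)$, where $\mathcal{T}_{\PP^5}$ is the tangent bundle; this follows from $\Omega^3_{\PP^5}\cong\bigwedge^2\mathcal{T}_{\PP^5}\otimes\omega_{\PP^5}$ together with $\omega_{\PP^5}=\oo(-6)$. If $x_1,\dots,x_5$ denote the Chern roots of $\mathcal{T}_{\PP^5}$, then from $c(\mathcal{T}_{\PP^5})=(1+h)^6$ (here $h$ is the hyperplane class, $\int_{\PP^5}h^5=1$) one checks the power sums $\sum_i x_i^k=6h^k$ for $1\le k\le 5$. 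The Chern roots of $E$ are the ten classes $x_i+x_j-3h$ ($i<j$), and a short Newton-identity computation gives their power sums $P_k=\sum_{i<j}(x_i+x_j-3h)^k$, namely $P_1=-6h$, $P_2=0$, $P_3=6h^3$, $P_4=0$, $P_5=-66h^5$.

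Finally, from $\log s(E)=-\log c(E)=\sum_{k\ge 1}\tfrac{(-1)^k}{k}P_k$ I obtain $\log s(E)=6h-2h^3+\tfrac{66}{5}h^5$ modulo $h^6$; exponentiating and reading off the coefficient of $h^5$ gives $s_5(E)=\big(\tfrac{66}{5}-36+\tfrac{6^5}{120}\big)h^5=42\,h^5$, and integrating over $\PP^5$ yields $\deg O_2=42$. The only genuinely delicate point is pinning down the projective-bundle convention, i.e.\ matching the inclusion $E=\Omega^3_{\PP^5}(3)\hookrightarrow\bigwedge^3W\otimes\oo$ (under which $\alpha$ is literally given by $|T|$) with the sign conventions in the Segre-class pushforward, so that the plain Segre class $s_5(E)$ is the class one must integrate; once this is fixed, the symmetric-function bookkeeping above is routine.
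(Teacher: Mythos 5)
Your proof is correct and gives the right answer, but it takes a genuinely different route from the paper's. You work upstairs on the $14$-dimensional model $\alpha\colon\PP(\Omega^3_{\PP^5}(3))\to O_2$, use that $\alpha$ is birational and given by $|T|$, and reduce the degree to the Segre number $\int_{\PP^5}s_5(\Omega^3_{\PP^5}(3))$, which you evaluate via the Chern roots $x_i+x_j-3h$ of $\bigwedge^2 T_{\PP^5}(-3)$; I checked the power sums $P_1=-6h$, $P_2=P_4=0$, $P_3=6h^3$, $P_5=-66h^5$ and the resulting $s_5=42h^5$, and they are all correct. This is essentially the alternative the paper itself alludes to when it remarks that the corollary ``can also be proved using the methods from [W]''. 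The paper's own proof instead works downstairs on the fourfold $V'=O_2\cap\PP(A)$ for a generic Lagrangian $A$: since $\PP(A)$ has codimension $10$, $\deg O_2$ equals $T^4$ on $V'$, and Lemma \ref{W_2} gives $2T=6H-E$, so the degree is $(6H-E)^4/16$; the intersection numbers $H^4=6$, $H^3E=0$, $H^2E^2=-80$, $HE^3=-480$, $E^4=-1344$ are then extracted from the EPW geometry ($V'$ is the blow-up of the sextic $S_A$ along the degree-$40$ surface $G$, $K_{V'}=0$ forces $E|_E=K_E$, and $E=\PP(T_G)$ with $K_E=-2\psi$ and the Grothendieck relation $\psi^2-3\psi H+c_2(p^{\ast}T_G)=0$). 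Your argument is more elementary and self-contained, needing only the projective-bundle description of $O_2$ from the Appendix and no facts about EPW sextics; the paper's version leans on the divisor relations and the structure of the exceptional locus it has already established for the main theorems. The one convention issue you flag is in fact forced: with the subbundle convention $\Omega^3_{\PP^5}(3)\hookrightarrow\bigwedge^3W\otimes\oo_{\PP^5}$ the class to integrate is $s_5(E)$ with $s(E)=c(E)^{-1}$, since the alternative $s_5(E^{\vee})=-s_5(E)$ would give a negative degree.
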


\begin{proof}

We have to compute $\frac{(6H-E)^4}{16}$. Thus it is enough to prove that
$H^4=6$, $H^3E=0$, $H^2E^2=-80$, $HE^3=-480$, and $E^4=-1344$.
First from the adjunction formula $E^2H^2=K_EH^2$, $E^3H=K_E^2H$, and
$E^4=K_E^3$.
Now from \cite[\S 4]{O} we deduce that $p\colon
E=\mathbb{P}(T_{G})\rightarrow G$. Thus $K_{E}=-2\psi$ where $\psi$ is
the tautological divisor. Finally, we need the  equality
$$
\psi^2-3\psi\cdot H+c_2(p^{\ast}(T_{G}))=0.
$$
\end{proof}

Since $E=2(3H-T)$ (see Lemma \ref{W_2}) is even in the Picard group of $V'$, there exists a double
cover of  $\overline{X}\rightarrow V'$ ramified along $E$ (we can take the
double cover ramified along $E_2$).
The strict transform of $E$ on $\overline{X}$ can be blown down so that the
image is the
irreducible symplectic manifold $X$ constructed by O'Grady.

\bigskip G. Kapustka Institute of Mathematics of the Polish Academy of Sciences,\\
ul. \'{S}niadeckich 8, P.O. Box 21, 00-656 Warszawa, Poland.\\
\\ Jagiellonian
University, {\L}ojasiewicza 6, 30-348 Krak\'{o}w, Poland.\\
\emph{E-mail address:} grzegorz.kapustka@uj.edu.pl\\

\bigskip M. Kapustka
 Department of Mathematics and Informatics,\\ Jagiellonian
University, {\L}ojasiewicza 6, 30-348 Krak\'{o}w, Poland.\\
\emph{E-mail address:} michal.kapustka@uj.edu.pl

\end{document}